\documentclass{article}

\usepackage{graphicx,amsmath,amsfonts,amssymb,amsthm}

\usepackage{cleveref}
\usepackage{color}
\usepackage{url}

\newtheorem{theorem}{Theorem}
\newtheorem{lemma}{Lemma}
\newtheorem{proposition}{Proposition}
\newtheorem{corollary}{Corollary}
\newtheorem{remark}{Remark}
\newtheorem{property}{Property}

\numberwithin{theorem}{section}
\numberwithin{lemma}{section}
\numberwithin{proposition}{section}
\numberwithin{corollary}{section}
\numberwithin{remark}{section}
\numberwithin{property}{section}

\numberwithin{equation}{section}


\crefname{property}{Property}{Properties}
\crefname{lemma}{Lemma}{Lemmas}
\crefname{theorem}{Theorem}{Theorems}
\crefname{proposition}{Proposition}{Propositions}
\crefname{corollary}{Corollary}{Corollaries}
\crefname{equation}{}{}


\title{Integral equations for flexural-gravity waves: analysis and numerical methods}

\author{T. Askham\thanks{Department of Mathematical Sciences, New Jersey Institute of Technology, Newark, NJ 07102 (\texttt{askham@njit.edu}).} \and J.G. Hoskins\thanks{Department of Statistics and CCAM, University of Chicago, Chicago, IL 60637 (\texttt{jeremyhoskins@uchicago.edu}).} \and P. Nekrasov\thanks{Committee on Computational and Applied Mathematics, University of Chicago, Chicago, IL 60637 (\texttt{pn3@uchicago.edu}).} \and M. Rachh\thanks{Center for Computational Mathematics, Flatiron Institute, New York, NY 10010 (\texttt{mrachh@flatironinstitute.org}).}}


\newcommand{\bbR}{\mathbb{R}}

\newcommand{\bK}{\mathbf{K}}

\DeclareMathOperator*{\Res}{Res}

\newcommand{\bbC}{\mathbb{C}}
\newcommand{\dd}{\textrm{d}}
\newcommand{\GS}{G_{\mathrm{S}}}
\newcommand{\Gphi}{G_{\phi}}
\newcommand{\brthree}{\mathbf{r}_{\textrm{3d}}}
\newcommand{\brthreet}{\mathbf{r}_{\textup{3d}}}
\newcommand{\br}{\mathbf{r}}
\newcommand{\bx}{\mathbf{x}}
\newcommand{\by}{\mathbf{y}}
\newcommand{\bxi}{\mathbf{x}_{\bf i}^h}
\newcommand{\bxip}{\mathbf{x}_{{\bf i}'}^h}

\newcommand{\bxl}{\mathbf{x}_{\boldsymbol{\ell}}^h}
\newcommand{\bell}{\boldsymbol{\ell}}
\newcommand{\bxlp}{\mathbf{x}_{\boldsymbol{\ell}'}^h}
\newcommand{\inc}{\textrm{inc}}
\newcommand{\scat}{\textrm{scat}}
\newcommand{\erf}{\textrm{erf}}
\newcommand{\alphn}{{\alpha_0}}
\newcommand{\betan}{{\beta_0}}
\newcommand{\threed}{\textrm{3d}}
\newcommand{\threedt}{\textup{3d}}
\newcommand{\pv}{\operatorname{p.\!v.}}

\begin{document}

\maketitle

\begin{abstract}
In this work, we develop a fast and accurate method for the scattering of flexural-gravity waves by a thin plate of varying thickness overlying a fluid of infinite depth. This problem commonly arises in the study of sea ice and ice shelves, which can have complicated heterogeneities that include ridges and rolls. With certain natural assumptions on the thickness, we present an integral equation formulation for solving this class of problems and analyze its mathematical properties. The integral equation is then discretized and solved using a high-order-accurate, FFT-accelerated algorithm. The speed, accuracy, and scalability of this approach are demonstrated through a variety of illustrative examples.
\end{abstract}

\section{Introduction}

The motion of a thin plate coupled to a fluid is a subject of longstanding interest in the fields of geophysics and acoustics. In the polar regions, ice sheets floating atop the ocean are often modeled as thin plates that bend and flex in response to hydrodynamic forces. The coupling of plates to acoustic media results in similar wave phenomena and has important implications for aerodynamic stability \cite{miles1956, graham1996boundary} and sound radiation \cite{gorman1991plate, laulagnet1998sound}. In complicated media, these waves exhibit peculiar dynamics that include branching \cite{jose2022branched, jose2023branched}, bending \cite{andronov1997passage, darabi2018broadband}, and cloaking \cite{li2015acoustic, zareei2017broadband}. 

In the Arctic, ocean waves interact with sea ice in a dynamic region known as the marginal ice zone. Here, short period waves are attenuated by relatively thin ice, allowing longer periods to travel deeper into the ice pack \cite{wadhams1988}. The same is true in the Southern Ocean, where the opening of sea ice-free corridors allows the sea swell to make contact with Antarctic ice shelves \cite{teder2022sea, cathles2009seismic}. Meanwhile, tsunami and infragravity waves, largely unattenuated by sea ice, are able to reach the ice shelf to induce significant flexure and calving \cite{bromirski2017tsunami, brunt2011antarctic}. While it is difficult to separate the impact of ocean wave forcing from thermodynamic processes, there is growing evidence to support the role of ocean waves in the breakup of Antarctic ice shelves and  Arctic sea ice \cite{asplin2012, kohout2014storm, kohout2016situ,walker2024multi}. 
To weigh these factors, it is critical to understand and model interactions between ice and the ocean at various scales.  The problem is inherently multi-scale, since sharp ice features like ridges account for a large portion of the total ice area \cite{hibler1972statistical}, and can significantly affect wave attenuation and propagation in the polar regions.

In the small thickness limit, the ice-cover can be treated as the boundary of the fluid domain wherein the velocity potential satisfies a fourth order differential equation whose coefficients vary with the thickness of the ice-cover. Extensive work has been done to model the propagation of flexural-gravity waves across various obstructions in ice-covers of uniform thickness. These include narrow cracks \cite{squire2000analytic, porter2006scattering}, wide cracks \cite{chung2005reflection}, frozen leads \cite{barrett},  and icebergs \cite{squire2001modelling}; see \cite{squire2007ocean} for a comprehensive overview of these works. In this regime, semi-analytic solutions can be obtained using eigenfunction expansions \cite{fox1994, sahoo2001scattering, evans2003wave}, Carlemann singular integral equations \cite{chakrabarti2000solution}, or the Wiener-Hopf method \cite{kouzov1963diffraction, chung2002propagation, chung2002calculation, balmforth1999ocean}. Interestingly, many of these works find that ice features act as low-pass filters, which may help to explain the attenuation patterns observed in the Arctic.

Less attention has been given to the problem of wave scattering induced by continuous changes in the thickness of the ice, likely because they are more resistant to analytical techniques. Early attempts to solve this problem assume that ice features can be modeled as point inhomogeneities \cite{andronov1990acoustic, marchenko1995natural} or rely on the shallow-water approximation \cite{marchenko1997surface}. In three dimensions, this problem has been approximated using a \emph{ray method} \cite{hermans2003ray}, or by using a truncated modal decomposition of the vertical component of the velocity \cite{porter2004approximations}; though for the latter, numerical procedures have only been implemented in two dimensions \cite{bennetts2007multi} or for axisymmetric ice floes \cite{bennetts2009wave}. An alternate approach is to model the full-thickness of ice which has been used to study wave propagation in crevasses, see~\cite{sergienko17,ice2, nekrasov2023rolls}, for example. However, these continuum models  become prohibitively expensive in three dimensions, motivating the design of fast methods that work in the thin-plate regime. Perhaps the most closely related prior method to the present work is~\cite{williams2004oblique}. Using Green's identities, the authors construct integral equations for unknowns supported on the water-ice interface to approximate the reflected and transmitted coefficients for incident plane waves in two dimensions.

In this work, we derive a novel integral equation formulation for the fully three-dimensional scattering problem, assuming the thickness is smoothly varying and the region of inhomogeneity is compactly supported. We do this by reducing the problem to an associated boundary integral equation on the ice-covered surface. The reduction occurs in two steps. First, we use standard potential theory techniques to convert the three-dimensional boundary value problem into an integro-differential equation for an unknown defined on the surface. In the second step, we further reduce this integro-differential equation to an integral equation defined only on the compact support of the perturbation of the thickness. The resulting integral equation is Fredholm second kind and, for a large range of parameters, is provably well-posed. Conveniently, existing numerical methods can
be easily adapted to discretize this integral equation with high order accuracy and it is straightforward to establish
the convergence of the resulting scheme. Further, the discretized system is amenable to solution by fast algorithms,
enabling the scalability of this approach to large scattering problems and (essentially) arbitrary accuracy. 

In the solution of the integro-differential equation resulting from the first step of the reduction, one obtains a slowly decaying surface density, a problem that often arises with infinite interfaces. An alternate strategy to the one presented here is the technique of PML-BIE \cite{pmlbie}, or coordinate complexification \cite{dir_comp}, which involves complexifying the boundary so that the density decays exponentially and the integration can be performed over a finite region. This method was recently applied in \cite{bonnetbendhia} for the solution of two-dimensional water waves with one-dimensional boundary, and we expect that the same can be readily extended to three-dimensions. For this approach, even though the coordinates are complex, high order quadrature schemes~\cite{acha} and fast algorithms are available. In the present context, the approach adopted here has the advantage of being substantially simpler both to implement and analyze, as well as more amenable to acceleration via traditional fast algorithms. More broadly, integral equation methods of this flavor have been widely applied to scattering problems, and while we do not 
seek to review them here, we remark that these methods have appeared in several related contexts, including 
capillary surfers \cite{de2018capillary, oza2023theoretical},
 obstacles floating amid surface waves \cite{bonnetbendhia}, and surface waves on interfaces between insulating materials \cite{cpam2025,dirac2024}.

The remainder of this paper is as follows. In \Cref{sec:problem}, we outline the boundary value problem for flexural-gravity waves, and features of the model. In \Cref{sec:inteq}, we describe the reduction of the PDE to its associated surface integral equation. Following this, in \Cref{sec:analyticalie} we state key analytical properties of the operators appearing in this integral equation, and in \Cref{sec:existenceuniqueness} we prove existence and uniqueness results. The numerical procedure for solving this integral equation and its convergence are briefly described in \Cref{sec:numerics}, and we present a number of applications of these methods in \Cref{sec:examples}. Finally we discuss the limitations of the present work and areas for future research in \Cref{sec:conclusion}.

\section{Problem}\label{sec:problem}

\begin{figure}[ht]
  \centering
  \includegraphics[width=4in]{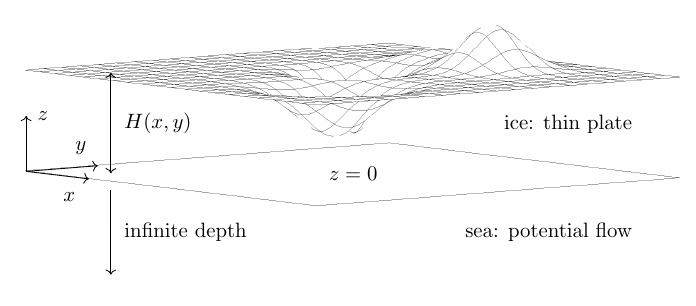}
  \caption{Illustration of the problem setup.}
  \label{fig:setup}
\end{figure}

Let the fluid domain be defined by the lower half-space $\{ (x,y,z)  :  z \leq 0 \}$, and let the ice-covered surface be its top boundary. Then, we have the following boundary value problem:
\begin{align}
  \Delta \phi + \partial_z^2 \phi  = 0 \, , &  & z < 0 \, ,   \label{eq:phipde1}
 \\
   ( \alpha \Delta^2_{\rm S} - \beta) \partial_z \phi 
  + \gamma \phi = 0 \, , &   & z = 0 \, ,  \label{eq:phipde2} 
\end{align}
supplemented with suitable decay conditions defined in \Cref{sec:existenceuniqueness}. Here, $\phi$ is the velocity potential, 
$\Delta := \frac{\partial^2}{\partial x^2} +  \frac{\partial^2}{\partial y^2} $ is the surface Laplacian, and $\Delta^2_{\rm S}$ is the modified biharmonic operator defined below. Though equations like these arise in a number of applications related to acoustics, the setting which motivates the current work is the dynamics of ice-covered oceans. In this context, the coefficients have the following physical connotations: $\alpha  $ is the flexural rigidity of the ice, $\beta  $ is the difference in the ice's inertia and gravitational force acting on the fluid, and $\gamma$ is related to the hydrodynamic pressure: 
$$\alpha := \frac{EH^3}{12(1-\nu^2)} \, , \quad \beta := \rho_{\textrm{ice}} H \omega^2 - \rho_{\textrm{sea}} g \, , \quad \gamma := - \rho_{ \textrm{sea}} \omega^2 \, , $$
where $E$ is the Young's modulus, $\nu$ is the Poisson ratio, $H$ is the thickness, $\rho_{ \textrm{ice}}$ is the density of ice, $\rho_{ \textrm{sea}}$ is the density of ocean water, $g$ is acceleration due to gravity, and $\omega$ is the frequency of the hydroelastic wave, which is a fixed parameter of the problem. See  \Cref{fig:setup} for an illustration of the problem setup and \Cref{table:1} for common values of these physical parameters. When the flexural rigidity $\alpha$ is allowed to vary in space, the modified biharmonic operator $\Delta_{\rm S}$ acting on the surface $z = 0$ is given by 
\begin{align*}
    \Delta^2_{\rm S}:= \frac{1}{\alpha} \Delta( \alpha \Delta) + \frac{1 - \nu}{\alpha} \left( 2 \frac{\partial^2 \alpha}{\partial x \partial y } \frac{\partial^2 }{\partial x \partial y } - \frac{\partial^2 \alpha}{\partial x^2} \frac{\partial^2 }{\partial y^2}  - \frac{\partial^2 \alpha}{\partial y^2} \frac{\partial^2 }{\partial x^2}  \right) \, .
\end{align*}  
This operator appears in many related works, including \cite{porter2004approximations,bennetts2007multi,Reissner, PAPATHANASIOU2019102741}. We note that when the flexural rigidity $\alpha$ is constant, this operator simplifies to $\Delta_{\rm S}^2 =  \Delta^2$.

In the present study, we consider the case where the material properties of the ice are allowed to vary in space, i.e. $\alpha(x,y) = \alpha_0 + \alpha_c(x,y) $, 
$\beta(x,y) = \beta_0 + \beta_c(x,y)$, and $\gamma(x,y) = \gamma$, where $\alpha_0 > 0$, $\beta_0 \in \bbC$, and $\gamma < 0$ 
are given constants, and $\alpha_c$ and $\beta_c$ are compactly supported on a bounded region $\Omega \subset \mathbb{R}^2$. Note that the most common source of variation in these coefficients is changes in the thickness $H$; however, the methods and analysis described here are sufficiently general to encompass contexts in which other material properties such as density or Young's modulus also vary. If $\Im{(\beta_0)} > 0$ then we say that the equations are in the dissipative regime, while if $\Im{(\beta_0)} = 0$ we say that they are in the non-dissipative regime. 

The model given by \cref{eq:phipde1,eq:phipde2} assumes infinite depth, which is appropriate when the fluid depth significantly exceeds the  characteristic wavelength, as is the case for Arctic sea ice which sits above deep water where there is minimal interaction with the ocean floor \cite{christakos2024long}. This model also assumes the well-known shallow draft approximation \cite{buchner1993evaluation}, which allows one to treat the undersurface of the ice as the fixed surface $z=0$ and assume that all thickness changes manifest in the upper surface of the ice (see  \Cref{fig:setup}), even though such configurations may not be hydrostatically equilibrated. Previous studies that have looked at the effect of draft \cite{porter2004approximations, vaughan2006scattering} suggest that the governing equations depend merely on the distance between the keel and the sea bottom, so we do not expect the dynamics to be significantly altered in the infinite depth setting. Both the plate and fluid displacements are assumed to be small, allowing one to neglect quadratic terms in the equations of motion and fix the boundary.

\begin{table}
\centering
\begin{tabular}{c|c|c|c}
     Description & Parameter & Values & Reference  \\
    \hline\hline
  Young's modulus & $E$ & $7\cdot10^9$  kg m$^{-1}$ s$^{-2}$ & \cite{weeks1967mechanical} \\
 \hline Poisson ratio &  $\nu$ & 0.33 & \cite{weeks1967mechanical}  \\
 \hline Ice thickness & $H$ & 0.1 m -- 1 km & \cite{landy2022year, hogg2021extending}  \\
 \hline Density of seawater & $\rho_{ \textrm{sea}}$ & 1025  kg m$^{-3}$ &   \\
 \hline Density of ice & $\rho_{\textrm{ice}}$  & 917 kg m$^{-3}$ & \\
\hline Gravitational acceleration & $g$ & 9.8 m s$^{-2}$ \\
 \hline Frequency & $\omega$ &  0.004 -- 10 s$^{-1}$ & \cite{bromirski2012response, balmforth1999ocean}  \vspace{0.3cm} 
\end{tabular}
\caption{Physical parameters of the model and their approximate values. The thickness of the ice can range from ordinary sea ice (0.1 - 4 m), to sea ice  ridges (5 - 30 m), to Antarctic ice shelves (hundreds of meters to 1 km). The frequency of wave forcing can also vary dramatically -- from low-frequency tsunami and infragravity waves (0.004 - 0.01 Hz),  to ordinary gravity waves (0.05 - 1 Hz) and their higher resonances.}
\label{table:1}
\end{table}

\section{Reduction to an integral equation}\label{sec:inteq}
Suppose that a harmonic incident field $\phi^\inc$ is prescribed and
that $\phi = \phi^\inc + \phi^\scat$. The scattered field,
$\phi^\scat$ then satisfies  
\begin{equation}
   (\alpha \Delta_{{\mathrm S}}^2 - \beta) \partial_z \phi^\scat
  + \gamma \phi^\scat = f \,  , \quad z = 0 \, , \label{eq:phibc1}
\end{equation}
where
$$ f = -(\alpha \Delta_{{\mathrm S}}^2 - \beta) \partial_z \phi^\inc
- \gamma \phi^\inc \; .$$

We represent the scattered velocity potential $\phi^\scat$ by a
single layer potential with a charge density, $\sigma$,
defined on the surface $z=0$, i.e. 
\begin{equation}
  \label{eq:Sdef}
 \phi^\scat(\brthree) =  S[\sigma](\brthree) := \pv \int_{\bbR^2}
\frac{1}{ 4\pi \left |\brthree-\brthree' \right|} \sigma(\br') \,
\dd A(\br') \; ,
\end{equation}
where the principal value ($\pv$) integral is defined as
$$ \pv \int_{\bbR^2} g(\br') \, \dd A(\br') =
\lim_{R\to \infty} \int_{|\br'|\leq R} g(\br') \, \dd A(\br') $$
and 
$$ \brthree = \begin{pmatrix} x \\ y \\ z
\end{pmatrix} \; , \quad \brthree' = \begin{pmatrix} x' \\ y' \\ 0 
\end{pmatrix} \; ,   \quad \textrm{and} \quad  \br' = \begin{pmatrix} x' \\ y'
\end{pmatrix} \; .$$
Note that the $\pv$ integral is only necessary when the integrand is
not absolutely integrable. 
The potential defined this way is harmonic and satisfies
that $\partial_z \phi \to 0$  as $|\brthree| \rightarrow \infty$ for any $\sigma \in L^2(\mathbb{R}^2)$.  This reduces the problem to determining
$\sigma$ such that boundary condition
\cref{eq:phibc1} is satisfied. 

Substituting the single layer representation of $\phi^\scat$
into \cref{eq:phibc1} and applying standard integral
``jump formulas'' in the limit $z\to 0^-$
yields
\begin{equation}
  \label{eq:integrodiffsigma}
  \frac{1}{2} \left(\alpha \Delta^2_{{\mathrm S}} - \beta  \right) \sigma(\br)
+ \gamma \pv \int_{\bbR^2} \frac{1}{4\pi |\br - \br'|} \sigma(\br') \,
\dd A(\br')  = f(\br) \; ,
\end{equation}
for each $\br \in \bbR^2$. This is an integro-differential equation for $\sigma$. Note that a similar integro-differential equation in terms of plate displacement appears in \cite{hermans2003ray} with some flexural terms omitted.

The remainder of this section derives an integral
equation formulation for the surface problem~\cref{eq:integrodiffsigma}.
\Cref{sec:constantcoeff} describes an appropriate
Green's function for the constant coefficient case.
\Cref{sec:lippman} derives a variable coefficient
integral equation for $\sigma$ based on the (adjoint) Lippman-Schwinger
formalism and the Green's function of~{\Cref{sec:constantcoeff}.

\subsection{The constant coefficient problem}
\label{sec:constantcoeff}

When the rigidity is constant, we have that $\Delta_{{\mathrm S}}^2 =
\Delta^2$. We consider the fundamental solution, $\GS$, of the
constant coefficient version of \cref{eq:integrodiffsigma}, i.e. 
\begin{equation}
  \label{eq:GSid}
   \left( \alphn \Delta^2 - \betan  \right) \GS(\br, \br')
+ \gamma \pv \int_{\bbR^2} \frac{1}{2\pi \left|\br - \br'' \right|} \GS(\br', \br'') \,
\dd A(\br'')  =2 \delta(\br,\br') \; ,
\end{equation}
where $\alphn$, $\betan$, and $\gamma$ are constants.
We are also interested in the on-surface ($z=0$) value of the single layer
potential applied to this fundamental solution,
\begin{equation}
  \label{eq:Gphiid}
\Gphi (\br,\br') :=  \pv \int_{\bbR^2}
\frac{1}{ 4\pi \left |\br-\br'' \right|} \GS(\br'',\br') \,
\dd A(\br'') \; ,
\end{equation}
which is the velocity potential corresponding to an impulse
response.

The appropriate (outward radiating) solution of \cref{eq:GSid}
can be derived via the limiting absorption principle. In particular,
we first derive the solution of \cref{eq:GSid} for dissipative plates and take the limit
from the complex upper half plane to obtain
the appropriate radiating solution for real-valued $\betan$.

Before stating the formulas, we require the following lemma that establishes
some basic facts and notation
related to the Fourier multiplier on the left hand side of~\cref{eq:GSid}. 
Its proof is straightforward and omitted.
\begin{lemma}
  \label{lem:dispersionpoly}
  Let $\rho_1,\ldots,\rho_5$ denote the
  roots of the polynomial
  \begin{equation}
    \label{eq:dispersion}
     p(z) = \alpha_0 z^5 - \beta_0 z + \gamma \; .
  \end{equation}
  If $\Im(\betan)\ne 0$, then none of the $\rho_j$ are real. If
  $\betan \in \bbR$, then exactly one of the $\rho_j$ is a positive real number,
  which we label by $\rho_1$.

Supposing that all of the roots are distinct, 
we define the coefficients $e_1,\ldots,e_5$ by\begin{equation}  \label{eq:moments}
    e_j = \Res_{z=z_j} \frac{1}{p(z)} \quad \textrm{for each}
  \quad j=1,\ldots,5. 
  \end{equation}
  Additionally, the coefficients $e_j$ and roots $\rho_j$ satisfy the following relations
  \begin{equation}
    \sum_{j=1}^5 e_j \rho_j^q = 0 \textrm{ for } q\in\{0,1,2,3,5,6,7 \} \, , \
    \sum_{j=1}^5 e_j \rho_j^4 = \frac{1}{\alphn} \, . \label{eq:momentrelation}
  \end{equation}
\end{lemma}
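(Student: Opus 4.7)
The lemma bundles three essentially independent claims, so the plan is to handle them in order of increasing effort: the reality/positivity statements first, then the moment relations.

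For the first claim, the plan is a one-line argument: if $\rho \in \mathbb{R}$ satisfied $p(\rho) = 0$, then taking imaginary parts of $\alpha_0 \rho^5 - \beta_0 \rho + \gamma = 0$ (remembering $\alpha_0, \gamma \in \mathbb{R}$) gives $\Im(\beta_0) \rho = 0$, forcing $\rho = 0$ when $\Im(\beta_0) \ne 0$; but $p(0) = \gamma < 0$, a contradiction.

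For the second claim, I would do elementary calculus on the real polynomial $p$. Since $p(0) = \gamma < 0$ and $p(z) \to +\infty$ as $z \to +\infty$ (because $\alpha_0 > 0$), there is at least one positive real root. For uniqueness, examine $p'(z) = 5\alpha_0 z^4 - \beta_0$. If $\beta_0 \le 0$, then $p'>0$ on $(0,\infty)$ so $p$ is strictly monotone there. If $\beta_0 > 0$, then $p'$ has a unique positive zero at $z^\star = (\beta_0/(5\alpha_0))^{1/4}$, $p$ is decreasing on $(0,z^\star)$ and increasing on $(z^\star,\infty)$; since $p(0)<0$ forces $p(z^\star)<0$, there is still exactly one positive real root.

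The moment relations in \eqref{eq:momentrelation} are the main (though still routine) step. The cleanest approach is via the partial-fraction expansion
\begin{equation*}
\frac{1}{p(z)} = \sum_{j=1}^5 \frac{e_j}{z-\rho_j},
\end{equation*}
valid because the roots are distinct, and then matching Laurent expansions at $z=\infty$. Expanding each summand geometrically gives $\frac{1}{p(z)} = \sum_{k\ge 0} z^{-k-1} \sum_{j=1}^5 e_j \rho_j^k$. On the other hand, factoring $1/p(z) = (\alpha_0 z^5)^{-1}(1 - \beta_0/(\alpha_0 z^4) + \gamma/(\alpha_0 z^5))^{-1}$ and expanding the second factor in a geometric series yields
\begin{equation*}
\frac{1}{p(z)} = \frac{1}{\alpha_0 z^5} + \frac{\beta_0}{\alpha_0^2 z^9} - \frac{\gamma}{\alpha_0^2 z^{10}} + O(z^{-13}).
\end{equation*}
Matching coefficients of $z^{-1},\ldots,z^{-5}$ directly gives $\sum_j e_j \rho_j^q = 0$ for $q \in \{0,1,2,3\}$ and $\sum_j e_j \rho_j^4 = 1/\alpha_0$. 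For $q \in \{5,6,7\}$ the fastest route is not to continue the asymptotic expansion but to use the relation $\alpha_0 \rho_j^5 = \beta_0 \rho_j - \gamma$ to reduce powers: multiplying by $e_j \rho_j^{q-5}$ and summing expresses $\sum_j e_j \rho_j^q$ as an $\alpha_0^{-1}$-linear combination of the already-known moments $\sum_j e_j \rho_j^{q-4}$ and $\sum_j e_j \rho_j^{q-5}$, all of which vanish. The only mildly delicate point is justifying that $1/p(z)$ admits the stated global Laurent expansion for $|z|$ large; this follows from $|z| > \max_j |\rho_j|$, so no real obstacle arises.
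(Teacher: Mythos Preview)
Your proposal is correct. The paper itself omits the proof entirely, stating only that it ``is straightforward and omitted,'' so there is no approach in the paper to compare against; you have simply supplied the missing details. The imaginary-part extraction for the first claim, the elementary calculus on $p$ for the unique positive root, and the partial-fraction/Laurent matching for the moment relations are all standard and complete. As a minor remark, for $q\in\{5,6,7\}$ you could equally well have continued reading off Laurent coefficients directly, since $1/p(z)$ has no terms of order $z^{-6}$, $z^{-7}$, or $z^{-8}$; your recurrence shortcut via $\alpha_0\rho_j^5=\beta_0\rho_j-\gamma$ is equivalent and arguably cleaner.
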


\begin{remark}
  The polynomial \cref{eq:dispersion} corresponds to the deep water dispersion
  relation for flexural gravity waves \cite{fox1994,meylan2018dispersion}. 
  The moment relations \cref{eq:momentrelation}
  also appear in \cite{squire2001region, squire2001modelling}.
  The locations of the roots of \cref{eq:dispersion} dictate the
  behavior of $\GS$ at infinity. For ease of exposition, the
  formulas will be stated for the case that the roots
  $\rho_1,\ldots,\rho_5$ are distinct. The results 
  can easily be extended to the case in which two roots coincide 
  (the form of the polynomial $p(z)$ precludes three or more roots
  coinciding). Details are given in~\cref{app:greensfun}.
\end{remark}

The following theorem provides an explicit formula for the fundamental solution
of \cref{eq:GSid} and the corresponding velocity potential
\cref{eq:Gphiid}. Note that $\GS$ also appears in a slightly different form in~\cite{fox1999green}, and
similar Green's functions appear in problems involving capillary
surfers \cite{de2018capillary, oza2023theoretical}.
\begin{theorem}
  \label{thm:greensfun}
  Let the roots $\rho_1,\ldots,\rho_5$ and the coefficients
  $e_1,\ldots,e_5$ be as in the statement of \cref{lem:dispersionpoly}
  and suppose that the roots are distinct.

  If $\Im(\betan) \ne 0$, then
  \begin{equation}
    \label{eq:GSeps}
    \GS(\br,\br') = \frac{1}{2} \sum_{j=1}^5 e_j \rho_j^2
    \bK_0(-\rho_j |\br - \br'|) \; ,
  \end{equation}
  where ${\bf K}_0$ is the Struve function in standard notation~\cite{NIST}.  
  For real-valued $\betan$, consider the solution of \cref{eq:GSid} 
  with $\betan$ replaced by  
  $\beta_\epsilon = \betan + i\epsilon$. The limit of $\GS$ as $\epsilon\to 0^+$
  is 
  \begin{multline}
    \label{eq:GS}
    \GS(\br,\br') = \frac{1}{2} e_1 \rho_1^{2} \left  [
      - \mathbf{K}_0(\rho_1 |\br - \br'|) + 2 i H_0^{(1)} (\rho_1 |\br - \br'|)
      \right ]  \\ + \frac{1}{2} \sum_{j=2}^5 e_j \rho_j^2
    \bK_0(-\rho_j |\br - \br'|) \; ,
  \end{multline}
  where $H^{(1)}_0$ is the Hankel function of the first kind.
  
  The value of the
  corresponding velocity potential, given by \cref{eq:Gphiid}, is 
  \begin{equation}
    \label{eq:Gphieps}
    \Gphi(\br,\br') = \frac{1}{4} \sum_{j=1}^5 e_j \rho_j
    \bK_0(-\rho_j |\br - \br'|) \; 
  \end{equation}
  in the case that $\Im(\betan)\ne 0$ and 
  \begin{multline}
    \label{eq:Gphi}
    \Gphi(\br,\br') = \frac{1}{4} e_1 \rho_1 \left  [
      - \mathbf{K}_0(\rho_1 |\br - \br'|) + 2 i H_0^{(1)} (\rho_1 |\br - \br'|)
      \right ]  \\ + \frac{1}{4} \sum_{j=2}^5 e_j \rho_j
    \bK_0(-\rho_j |\br - \br'|) \; 
  \end{multline}
  for real-valued $\betan$.
  \end{theorem}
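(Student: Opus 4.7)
My plan is to compute $\GS$ and $\Gphi$ by solving the defining equation~\cref{eq:GSid} via the 2D Fourier transform, leveraging translation invariance so that $\GS(\br,\br') = \GS(\br-\br')$. The Fourier symbol of $\alphn\Delta^2 - \betan$ is $\alphn|\xi|^4 - \betan$, and the 2D Fourier transform of $1/(2\pi|\br|)$ is $1/|\xi|$, so~\cref{eq:GSid} reduces to an algebraic equation whose solution is
$$\widehat{\GS}(\xi) = \frac{2|\xi|}{\alphn|\xi|^5 - \betan|\xi| + \gamma} = \frac{2|\xi|}{p(|\xi|)}.$$
Similarly, since the on-surface single layer amounts to convolution with $1/(4\pi|\br|)$ (Fourier symbol $1/(2|\xi|)$), one gets $\widehat{\Gphi}(\xi) = 1/p(|\xi|)$. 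What remains is the radial (order-zero Hankel) inversion of these two symbols.

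For the dissipative case $\Im(\betan)\ne 0$, all roots lie off the real axis. I expand $1/p(k) = \sum_{j=1}^5 e_j/(k-\rho_j)$ by partial fractions and pass to the inverse Fourier transform. For $\widehat{\GS}$ this gives $(1/\pi)\int_0^\infty k^2 J_0(kr)/p(k)\,\dd k$ and for $\widehat{\Gphi}$ it gives $(1/(2\pi))\int_0^\infty k J_0(kr)/p(k)\,\dd k$; neither integrand decays at infinity. Using the elementary identities $k^2/(k-\rho) = (k+\rho) + \rho^2/(k-\rho)$ and $k/(k-\rho) = 1 + \rho/(k-\rho)$, together with the moment relations $\sum_j e_j \rho_j^q = 0$ for $q\in\{0,1\}$ from~\cref{lem:dispersionpoly}, the offending polynomial-in-$k$ pieces cancel exactly, leaving only integrals of the form $\int_0^\infty J_0(kr)/(k-\rho_j)\,\dd k$. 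Each such integral evaluates in closed form to $(\pi/2)\bK_0(-\rho_j r)$ via a classical Struve-function identity (see, e.g., NIST DLMF~11.7), and collecting constants yields~\cref{eq:GSeps,eq:Gphieps}.

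For real-valued $\betan$, exactly one root $\rho_1 > 0$ sits on the contour of integration, while the other four contribute as in the dissipative case. Setting $\beta_\epsilon = \betan + i\epsilon$ shifts $\rho_1$ into the upper half-plane, and as $\epsilon \to 0^+$ the Sokhotski--Plemelj formula gives
$$\lim_{\epsilon \to 0^+} \int_0^\infty \frac{J_0(kr)}{k - \rho_1 - i\epsilon}\,\dd k = \pv\!\int_0^\infty \frac{J_0(kr)}{k - \rho_1}\,\dd k + i\pi J_0(\rho_1 r).$$
The principal-value part, combined with the $i\pi J_0$ Dirac contribution and the identity $H_0^{(1)} = J_0 + i Y_0$, reassembles into the bracketed term $-\bK_0(\rho_1 r) + 2 i H_0^{(1)}(\rho_1 r)$ of~\cref{eq:GS,eq:Gphi}. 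The main delicate point is the branch and contour bookkeeping here: I would carefully verify that the closed-form expression for $\int_0^\infty J_0(kr)/(k-\rho)\,\dd k$ continues correctly as $\rho$ moves from the upper half-plane onto the positive real axis, that $\bK_0(-\rho r) = \bH_0(-\rho r) - Y_0(-\rho r)$ is interpreted on the correct sheet across the branch cut of $Y_0$, and that the limiting-absorption prescription selects the outgoing Hankel function $H_0^{(1)}$ rather than $H_0^{(2)}$. Everything else is mechanical tracking of constants.
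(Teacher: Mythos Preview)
Your proposal is correct and follows essentially the same route as the paper: Fourier transform, partial fractions, moment relations to cancel the non-decaying pieces, and the Gradshteyn--Ryzhik identity $\int_0^\infty J_0(kr)/(k-\rho)\,\dd k = (\pi/2)\,\bK_0(-\rho r)$ for $\arg\rho\ne 0$. The paper handles the real root $\rho_1$ by directly continuing this identity from the upper half-plane (using $Y_0(ze^{\pi i}) = Y_0(z) + 2iJ_0(z)$ to cross the branch cut of $\bK_0$) rather than via Sokhotski--Plemelj, and it explicitly verifies the sign $\left.\partial\rho_1/\partial\beta\right|_{\beta_0} = \rho_1/p'(\rho_1) > 0$ to confirm that $\rho_1(\beta_0+i\epsilon)$ approaches from above---you assert this without proof, so be sure to include that check.
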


\begin{proof}
The formula can be derived using Fourier analysis.
  See~\cref{app:greensfun} for details.
\end{proof}

\subsection{Variable coefficients}
\label{sec:lippman}

Recall that $\alpha(\br) = \alpha_0 + \alpha_c(\br)$, 
$\beta(\br) = \beta_0 + \beta_c(\br)$, where $\alpha_c$ and $\beta_c$ are smooth functions which are compactly supported on $\Omega \subset \mathbb{R}^2$. We represent $\sigma$ using
\begin{equation} \label{sigmadef}
 \sigma(\br) = \int_{\Omega} \GS(\br,\br') \mu(\br') \, \dd A(\br') \; ,
\end{equation}
where $\GS$ is the Green's function from \cref{thm:greensfun}
for the constant parameters $\alpha_0$, $\beta_0$, and $\gamma$.
Substitution of this representation into \cref{eq:integrodiffsigma}
one obtains the adjoint Lippman-Schwinger equation:
\begin{equation}
  \frac{\alpha}{\alpha_0} \mu(\br) + \frac{1}{2} \sum_{i=1}^8 \int_{\Omega} K_i(\br,\br') \mu(\br') \, \dd A(\br')  =  f(\br) \; , \label{eq:LS}
\end{equation}
where the kernels $K_i \ (i = 1, ... ,8)$ in the integral equation above are defined by
\begin{align}
    &K_1(\br,\br') = 2 \, \partial_x  \alpha_c(\br)  \, \partial_x  \Delta_\br \GS(\br,\br') \; , \label{kernel1} \\
    &K_2(\br,\br') = 2 \, \partial_y  \alpha_c(\br) \, \partial_y  \Delta_\br  \GS(\br,\br') \; , \label{kernel2} \\
    &K_3(\br,\br') = \Delta_\br  \alpha_c (\br) \, \Delta_\br  \GS(\br,\br') \; ,  \label{kern3}  \\
    &K_4(\br,\br') =  - (1-\nu) \partial^2_x \alpha_c (\br) \, \partial_y^2 \GS(\br,\br') \; , \label{kern4}\\
    &K_5(\br,\br') =  - (1-\nu) \partial^2_y \alpha_c (\br) \, \partial^2_x\GS(\br,\br') \; , \label{kern5}\\
    &K_6(\br,\br') = 2 (1-\nu)\partial_{xy} \alpha_c (\br) \,  \partial_{xy} \GS(\br,\br') \; , \label{kern6}\\
    &K_7(\br,\br') =  \frac{ \alpha_c(\br) \beta_0 - \alpha_0 \beta_c(\br)}{\alpha_0 } \GS(\br,\br') \; , \label{kern7}\\
    &K_8(\br,\br') = - \frac{\gamma \alpha_c (\br) }{\alpha_0 } G_\phi (\br,\br') \; , \label{kernel8} 
\end{align}
and the righthand side is given by
\begin{align}
    &f(\br) = -(\alpha\Delta_{{\mathrm S}}^2 - \beta) \partial_z \phi^\inc - \gamma \phi^{\rm inc} \; .\label{rhs}
\end{align}
If $\phi^\inc$ satisfies the constant coefficient part of the PDE, then we are left with the righthand side:
\begin{align*}
    f(\br) &= - (2 \nabla \alpha_c \cdot \nabla\Delta + (\Delta \alpha_c) \Delta + (1-\nu) (2 \partial_{xy} \alpha_c \partial_{xy} - \partial_{x}^2 \alpha_c\partial_y^2 -  \partial_{y}^2 \alpha_c\partial_x^2 ) ) \partial_z \phi^\inc \\ 
    &\qquad \qquad - ( \tfrac{\alpha_c}{\alpha_0}\beta_0 - \beta_c ) \partial_z \phi^\inc +\tfrac{ \alpha_c}{\alpha_0}  \gamma    \phi^\inc \, .
\end{align*}
Then we use the following representation(s) to obtain the total fields $\phi$ and $\partial_z \phi$ on the boundary $z = 0$:
\begin{align}
     \phi(\br) &= \phi^\inc(\br) + \int_{\bbR^2} G_{\phi}(\br,\br') \mu(\br') \, \dd A(\br') \, , \\
    \partial_z \phi(\br) &= \partial_z \phi^\inc(\br) + \frac{1}{2} \int_{\bbR^2} \GS(\br,\br') \mu(\br') \, \dd A(\br') \, .
\end{align} 

\section{Analytical properties of the integral equation}\label{sec:analyticalie} In this section and the following one, we assume that the perturbations $\alpha_c, \beta_c \in C_c^\infty(\Omega)$. We also assume that the thickness $H$ is always non-zero, so that $\alpha > 0$ everywhere in the domain.

\subsection{Mapping properties of the integral equation} 

In this section we show that \cref{eq:LS} is a second kind integral equation.
The main analytical result is the following theorem.
\begin{theorem}\label{compact}
  Let $m\geq 0$. 
  The integral operators associated with the kernels
  \cref{kernel1,kernel2,kern3,kern4,kern5,kern6,kern7,kernel8}
  map $H^m(\Omega) \to H^{m+1}(\Omega)$.
\end{theorem}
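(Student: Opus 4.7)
The plan is to recognize each $K_i$ as the composition of a smooth compactly supported multiplier with a translation-invariant convolution operator whose Fourier symbol decays at order at most $-1$, and to read off the $H^m\to H^{m+1}$ bound directly from the symbol.

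First I would compute the Fourier symbol of $\GS$. Taking the Fourier transform in $\br$ of \cref{eq:GSid} on $\bbR^2$, and using that the Fourier transform of $(2\pi|\br|)^{-1}$ is $|\xi|^{-1}$, one obtains
\[
\widehat{\GS}(\xi) = \frac{2|\xi|}{\alpha_0|\xi|^5 - \beta_0|\xi| + \gamma} = \frac{2|\xi|}{p(|\xi|)} \, ,
\]
with $p$ the dispersion polynomial of \cref{lem:dispersionpoly}. Since $\widehat{\GS}(\xi)\sim 2/(\alpha_0|\xi|^4)$ as $|\xi|\to\infty$, convolution with $\GS$ is a Fourier multiplier of order $-4$ (up to a compactly supported correction in $\xi$ coming from the isolated real poles of $1/p$, which is smoothing and handled by limiting absorption). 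This matches the biharmonic-type singularity $|\br-\br'|^2\log|\br-\br'|$ that appears near the diagonal once the series for $\mathbf{K}_0$ is combined with the moment identities \cref{eq:momentrelation}. Consequently $\mu\mapsto\partial^\alpha\GS *\mu$ has order $|\alpha|-4$, and $\mu\mapsto\Gphi*\mu$ has order $-5$.

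Next I would place each kernel in the standard form
\[
K_i(\br,\br') = \phi_i(\br)\,(D^{\alpha_i} G)(\br-\br') \, ,
\]
where $\phi_i\in C_c^\infty(\Omega)$ is built from derivatives of $\alpha_c,\beta_c$, $G\in\{\GS,\Gphi\}$, and $|\alpha_i|\le 3$; in detail, $|\alpha_i|=3$ for $K_1,K_2$, $|\alpha_i|=2$ for $K_3$--$K_6$, and $|\alpha_i|=0$ for $K_7,K_8$. In every case the inner convolution operator has order $\le -1$. Given $\mu\in H^m(\Omega)$ (assuming $\Omega$ is Lipschitz so that a Stein extension is available), take $\bar\mu\in H^m(\bbR^2)$ extending $\mu$; then $(D^{\alpha_i} G)*\bar\mu\in H^{m+4-|\alpha_i|}(\bbR^2)\subset H^{m+1}(\bbR^2)$, and multiplication by $\phi_i$ produces an $H^{m+1}(\bbR^2)$ function supported in $\Omega$ whose restriction lies in $H^{m+1}(\Omega)$.

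The main technical obstacle is verifying that the construction is intrinsic to $\mu|_\Omega$, i.e.\ independent of the choice of extension. Two $H^m$-extensions differ on $\bbR^2\setminus\Omega$, and since $\mathrm{supp}(\phi_i)\Subset\Omega$, the relevant kernel $\phi_i(\br)(D^{\alpha_i}G)(\br-\br')$ is smooth on $\mathrm{supp}(\phi_i)\times(\bbR^2\setminus\Omega)$ (these sets are separated by positive distance), so the associated correction is a bounded smoothing operator that does not affect the $H^{m+1}$ conclusion. A self-contained alternative, avoiding extensions altogether, would be to expand $\GS(\br-\br')=c|\br-\br'|^2\log|\br-\br'|+R(\br-\br')$ with $R$ smooth on a neighborhood of the origin (using the series for $\mathbf{K}_0$ together with the moment relations), derive $|D^\alpha \GS(\br-\br')|\lesssim|\br-\br'|^{2-|\alpha|}(1+|\log|\br-\br'||)$ for $|\alpha|\le 3$, and invoke standard Sobolev mapping estimates for weakly singular integral operators in two dimensions, in which a singularity $|\br-\br'|^{-s}$ gains $2-s$ derivatives; the worst case $s=1$ (kernels $K_1,K_2$) yields exactly the claimed $H^m\to H^{m+1}$ bound.
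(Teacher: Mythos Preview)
Your argument is correct in outline and the Fourier-multiplier viewpoint is a legitimate, somewhat different route from the paper's. The paper works entirely in physical space: it expands $\GS$ in series using \cref{expansion1}--\cref{expansion3} and the moment relations \cref{eq:momentrelation}, identifies the leading singularity as the biharmonic kernel $\tfrac{1}{8\pi\alpha_0}|\br-\br'|^2\log|\br-\br'|^2$ with a $C^4$ remainder, and then invokes interior elliptic regularity for $\Delta^2$ (\cref{ellipticregularity2}) to obtain $H^m\to H^{m+4}_{\mathrm{loc}}$ for the volume potential, from which the claim for up to three derivatives follows after multiplication by the compactly supported $\alpha_c,\beta_c$ factors. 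Your approach instead reads off the order $-4$ symbol $2|\xi|/p(|\xi|)$ directly and appeals to standard Sobolev mapping for Fourier multipliers, which is cleaner when it applies and makes the gain of exactly one derivative for $K_1,K_2$ transparent.

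The one place your write-up is thin is the non-dissipative case $\Im(\beta_0)=0$. The symbol $2|\xi|/p(|\xi|)$ then has a genuine pole on the circle $|\xi|=\rho_1$, so it is not a classical multiplier of order $-4$ on $H^m(\bbR^2)$, and the phrase ``compactly supported correction in $\xi$ \ldots\ which is smoothing'' needs unpacking: the singular piece is a distribution (of Sokhotski--Plemelj type) supported in a compact annulus, and to pair it with $\hat{\bar\mu}$ you should take $\bar\mu$ to be a \emph{compactly supported} $H^m$ extension so that $\hat{\bar\mu}$ is smooth; then the product is a compactly supported distribution and its inverse transform is $C^\infty$ on $\bbR^2$, hence certainly in $H^{m+1}(\Omega)$ with the required bound. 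With that small clarification your argument closes. Your ``self-contained alternative'' via the local expansion is essentially the paper's proof; the paper's version has the advantage of treating both the dissipative and non-dissipative cases uniformly without any symbol-splitting.
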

\begin{corollary} \label{cor:compact}
  The same operators are compact from $L^2(\Omega) \rightarrow L^2(\Omega)$.
\end{corollary}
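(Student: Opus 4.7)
The plan is to identify each $K_i$ as a smooth, compactly supported prefactor in $\br$ composed with a translation-invariant pseudodifferential-type operator of order at most $-1$ in $\br-\br'$, and then to invoke standard Fourier-multiplier bounds on Sobolev spaces plus a localization argument. Taking Fourier transforms in $\br-\br'$ of the identities \cref{eq:GSid} and \cref{eq:Gphiid}, and using that the $2$-dimensional Fourier symbol of the Newtonian kernel $1/(2\pi|\br|)$ is $1/|\xi|$, one obtains $\hat{G}_S(\xi)=2|\xi|/p(|\xi|)$ and $\hat{G}_\phi(\xi)=1/p(|\xi|)$, which decay like $|\xi|^{-4}$ and $|\xi|^{-5}$ respectively as $|\xi|\to\infty$. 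In the dissipative regime these are classical Schwartz-type symbols and convolution with $G_S$ and $G_\phi$ smooths by $4$ and $5$ derivatives in the Sobolev scale. In the non-dissipative regime $\hat{G}_S$ has a pole on the radiation circle $|\xi|=\rho_1$; this is the main technical obstacle, but it is only a far-field feature of $G_S$ and does not affect its diagonal singularity, so one peels off the oscillatory tail by decomposing $G_S$ into a pseudolocal singular part that carries the high-frequency decay above and a smooth residual.

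Next, a direct inspection of \cref{kernel1,kernel2,kern3,kern4,kern5,kern6,kern7,kernel8} shows that each kernel has the form $a_i(\br)\,k_i(\br-\br')$, where $a_i\in C_c^\infty(\Omega)$ is built from $\alpha_c$, $\beta_c$ and their derivatives, and $k_i$ is a constant-coefficient differential operator in $\br-\br'$ of order $d_i$ applied to $G_S$ or $G_\phi$. The differential orders are $d_i=3$ for $K_1$ and $K_2$, $d_i=2$ for $K_3,\ldots,K_6$, and $d_i=0$ for $K_7$ (with $G_S$) and $K_8$ (with $G_\phi$); combined with the symbol computations of the previous paragraph, the net pseudodifferential orders of the $K_i$ are $-1,-2,-2,-2,-2,-2,-4,-5$, so each kernel has order at most $-1$, with the worst case coming from the third derivatives in $K_1$ and $K_2$.

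To transfer these bounds from $\bbR^2$ to $\Omega$, for any $\mu\in H^m(\Omega)$ I would pick a bounded Sobolev extension $\tilde{\mu}\in H^m(\bbR^2)$ and split
\[ \mathcal{K}_i\mu \;=\; a_i\,(k_i*\tilde{\mu}) \;+\; a_i\,\bigl(k_i*(\mu\chi_\Omega-\tilde{\mu})\bigr). \]
The first term lies in $H^{m+1}(\bbR^2)$ by the order $-1$ Fourier-multiplier bound together with the continuity of multiplication by $a_i\in C_c^\infty$ on Sobolev spaces. In the second term, $\mu\chi_\Omega-\tilde{\mu}$ is supported in $\bbR^2\setminus\Omega$ while $a_i$ is compactly supported inside $\Omega$, so the relevant pairs $(\br,\br')$ are separated by a positive distance, $k_i(\br-\br')$ is smooth there, and the output is $C_c^\infty$ with every Sobolev norm bounded by $\|\mu\|_{H^m(\Omega)}$. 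Restricting to $\Omega$ proves the theorem, and \Cref{cor:compact} then follows immediately from the $m=0$ case together with the Rellich--Kondrachov compact embedding $H^1(\Omega)\hookrightarrow L^2(\Omega)$.
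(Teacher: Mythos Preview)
Your argument is correct and reaches the corollary the same way the paper does---prove the $H^m\to H^{m+1}$ mapping and then invoke Rellich---but the route to that mapping is genuinely different. The paper works entirely in real space: it expands $\GS$ in power series, uses the moment relations \cref{eq:momentrelation} to show that the logarithmic singularities of the individual Struve and Hankel terms cancel, leaving the biharmonic Green's function $\tfrac{1}{8\pi\alpha_0}|\br-\br'|^2\log|\br-\br'|^2$ as the leading singularity plus a $C^4$ remainder, and then invokes interior elliptic regularity for $\Delta^2$ (\cref{ellipticregularity2}). You instead read off the Fourier symbol $\widehat{\GS}(\xi)=2|\xi|/p(|\xi|)$ directly and use multiplier bounds, handling the radiation pole at $|\xi|=\rho_1$ by a high-frequency cutoff whose compactly supported low-frequency complement has $C^\infty$ inverse transform. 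Your approach is more systematic and transfers immediately to other dispersion relations; the paper's is more explicit about the diagonal structure of $\GS$, which it reuses later in designing the quadrature corrections. One small point worth stating: in the non-dissipative case the smooth residual you peel off does not decay at infinity, so for the convolutions in both terms of your splitting to be well-defined you should choose the Sobolev extension $\tilde\mu$ to have compact support; this is standard but not automatic.
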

\begin{corollary}
  Equation \cref{eq:LS} is a Fredholm second kind integral
  equation on $L^2(\Omega)$. 
\end{corollary}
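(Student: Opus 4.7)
The plan is to read off the corollary by combining the preceding compactness corollary with an elementary observation about the ``diagonal'' term $\frac{\alpha}{\alpha_0}\mu$. Rewrite~\cref{eq:LS} in operator form as
\begin{equation*}
M \mu + K \mu = f, \qquad \text{where } M\mu := \frac{\alpha}{\alpha_0}\mu, \quad K := \frac{1}{2}\sum_{i=1}^{8} K_i,
\end{equation*}
with each $K_i$ denoting the integral operator with kernel \cref{kernel1,kernel2,kern3,kern4,kern5,kern6,kern7,kernel8}.

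The first step is to show that $M$ is a bounded, invertible multiplication operator on $L^2(\Omega)$. Since $\alpha_c \in C_c^\infty(\Omega)$, the function $\alpha/\alpha_0 = 1 + \alpha_c/\alpha_0$ is smooth, compactly perturbed from $1$, and by assumption $\alpha > 0$ on $\overline{\Omega}$. Hence $\alpha/\alpha_0$ and its reciprocal are both bounded on $\Omega$, which yields boundedness of $M$ and $M^{-1}$ on $L^2(\Omega)$.

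The second step is to invoke \Cref{cor:compact}: the sum $K$ is compact as an operator $L^2(\Omega)\to L^2(\Omega)$, since each $K_i$ is. Consequently $M^{-1}K$ is compact on $L^2(\Omega)$ as the composition of a bounded operator with a compact one. Multiplying the equation through by $M^{-1}$ yields
\begin{equation*}
(I + M^{-1} K) \mu = M^{-1} f,
\end{equation*}
which is of the form identity plus compact, the standard definition of a Fredholm second kind integral equation on $L^2(\Omega)$.

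There is essentially no obstacle here, since the analytical work has already been done in \Cref{compact} and \Cref{cor:compact}; the only subtlety worth flagging is the need to verify that $M$ is invertible in order to reduce ``multiplication plus compact'' to the scalar ``identity plus compact'' form. This invertibility rests precisely on the standing assumption that $H$, and hence $\alpha$, never vanishes on $\Omega$.
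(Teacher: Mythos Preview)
Your proof is correct and is exactly the argument the paper leaves implicit: the corollary is stated without proof, as an immediate consequence of \Cref{cor:compact} together with the standing assumption that $\alpha>0$, and you have simply spelled out the reduction $(I+M^{-1}K)\mu = M^{-1}f$ that makes this precise.
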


Before proving \cref{compact}, we require the following three lemmas.
The first lemma, appearing in Section 6.3 of \cite{evans}, pertains to the elliptic regularity of solutions to Poisson's equation and the second lemma
is a straightforward extension to solutions of the inhomogeneous biharmonic
equation. 
The third lemma, adapted from Proposition 4.4 of Chapter 4 of
\cite{taylor}, pertains to the compact embedding of Sobolev spaces
and establishes \cref{cor:compact}.
\begin{lemma}\label{ellipticregularity}
  Suppose that $f \in H^m(\Omega)$ and let $u \in H^1(\Omega)$ be a weak solution of the Poisson equation $$ \Delta u = f \quad x \in \Omega \, .$$
  Then, $u \in H^{m+2}_{\rm loc}(\Omega)$.
\end{lemma}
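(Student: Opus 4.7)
The plan is to prove interior $H^{m+2}$ regularity by induction on $m$, using the classical difference-quotient technique of Nirenberg. Since the result is purely local, I will systematically work against arbitrary compactly supported test functions and cutoffs rather than engaging with boundary data.

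For the base case $m=0$, I would fix an open set $V \Subset W \Subset \Omega$ and choose a cutoff $\zeta \in C_c^\infty(W)$ with $\zeta \equiv 1$ on $V$ and $0 \leq \zeta \leq 1$. Given a coordinate direction $e_k$ and a small step $h$, let $D_k^h$ denote the first-order forward difference quotient. Taking the weak formulation $\int_\Omega \nabla u \cdot \nabla v \, \dd \br = -\int_\Omega f v \, \dd\br$ with the test function $v = -D_k^{-h}\!\bigl(\zeta^2 D_k^h u\bigr)$ (supported in $W$ for $|h|$ small), using the discrete integration-by-parts identity $\int \phi \, D_k^{-h}\psi = -\int (D_k^h\phi)\,\psi$, and expanding the product rule on $\nabla(\zeta^2 D_k^h u)$, one obtains an inequality of the form
\begin{equation*}
\|\zeta \, D_k^h \nabla u\|_{L^2}^2 \leq C\bigl(\|\nabla \zeta\|_{L^\infty}, \|\zeta\|_{L^\infty}\bigr)\bigl(\|f\|_{L^2(W)}^2 + \|\nabla u\|_{L^2(W)}^2\bigr),
\end{equation*}
after absorbing a cross term via Cauchy's inequality with $\varepsilon$. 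Since the right-hand side is independent of $h$, the standard characterization of $H^1$ via bounded difference quotients yields $\partial_k \nabla u \in L^2(V)$ for each $k$. Hence $u \in H^2_{\rm loc}(\Omega)$.

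For the inductive step, assume the conclusion holds for some $m \geq 0$ and suppose $f \in H^{m+1}(\Omega)$. Since $u \in H^{m+2}_{\rm loc}$ by the inductive hypothesis, each partial derivative $\tilde u := \partial_k u$ lies in $H^{m+1}_{\rm loc}$ and is a weak solution of $\Delta \tilde u = \partial_k f$ on any subdomain $W \Subset \Omega$ (this is justified by testing against $\partial_k \varphi$ for $\varphi \in C_c^\infty(W)$ and transferring the derivative). Since $\partial_k f \in H^m(W)$, the inductive hypothesis applied to $\tilde u$ on $W$ gives $\tilde u \in H^{m+2}_{\rm loc}(W)$. Since $W$ is an arbitrary relatively compact subdomain and $k$ is arbitrary, every first partial of $u$ lies in $H^{m+2}_{\rm loc}(\Omega)$, which by definition means $u \in H^{m+3}_{\rm loc}(\Omega)$.

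The only subtle step is the base case, where one must ensure that the test function $v$ is genuinely admissible (i.e., compactly supported in $\Omega$ for all sufficiently small $h$) and that the commutator between $D_k^h$ and the $\zeta^2$ weight does not destroy the estimate; both are handled by the standard product-rule manipulation together with a Cauchy inequality to absorb one copy of $\zeta D_k^h \nabla u$ into the left-hand side. In the inductive step, the transfer of the derivative onto the test function is routine but relies critically on the fact that the statement is local, so no boundary terms appear. The whole argument is essentially that of Theorem 1 in Section 6.3.1 of Evans.
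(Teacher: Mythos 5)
Your proof is correct: the difference-quotient base case and the induction on $m$ obtained by differentiating the equation are exactly the classical interior-regularity argument of Evans, Section 6.3.1, which is precisely the source the paper cites for this lemma (the paper itself gives no proof). No gaps to report.
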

\begin{lemma}\label{ellipticregularity2}
  Suppose that $f \in H^m(\Omega)$ and let $u \in H^3(\Omega)$ be a weak solution of the inhomogeneous biharmonic equation $$ \Delta^2 u = f \quad x \in \Omega \, .$$
  Then, $u \in H^{m+4}_{\rm loc}(\Omega)$.
\end{lemma}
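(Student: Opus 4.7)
The plan is to reduce the biharmonic equation to two successive Poisson equations by introducing the intermediate variable $v := \Delta u$. Since $u \in H^3(\Omega)$, we immediately obtain $v \in H^1(\Omega)$, and interpreting derivatives distributionally,
$$\Delta v \;=\; \Delta^2 u \;=\; f \;\in\; H^m(\Omega)\,,$$
so $v$ is a weak $H^1$ solution of Poisson's equation on $\Omega$ with right-hand side in $H^m(\Omega)$. A direct application of \Cref{ellipticregularity} yields $v \in H^{m+2}_{\rm loc}(\Omega)$.

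Next I would apply the same regularity result a second time to the equation $\Delta u = v$, with $u \in H^3(\Omega) \subset H^1(\Omega)$ and right-hand side $v$. The subtlety is that $v$ is only in $H^{m+2}_{\rm loc}(\Omega)$ rather than globally in $H^{m+2}(\Omega)$, whereas \Cref{ellipticregularity} assumes the right-hand side lies in the global Sobolev space on the whole domain. To bridge this, I would fix an arbitrary point $x_0 \in \Omega$ and choose $r > 0$ small enough that the ball $B := B(x_0, 2r)$ satisfies $\overline{B} \subset \Omega$. Then $v \in H^{m+2}(B)$, and $u \in H^1(B)$ is a weak solution of $\Delta u = v$ on $B$. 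Applying \Cref{ellipticregularity} with $B$ in place of $\Omega$ gives $u \in H^{m+4}_{\rm loc}(B)$, and in particular $u \in H^{m+4}(B(x_0, r))$. Since $x_0$ was arbitrary, every compact subset of $\Omega$ can be covered by finitely many such balls, and it follows that $u \in H^{m+4}_{\rm loc}(\Omega)$.

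The main (mild) obstacle is exactly this local-to-global bookkeeping in the second iteration, since \Cref{ellipticregularity} as stated takes a globally-$H^m$ datum; everything else is a transparent double application of the Poisson regularity result. No boundary analysis is required because the conclusion is purely about interior regularity, and the fact that we only need $u \in H^1(\Omega)$ (rather than $H^3$) for the second step means the hypothesis $u \in H^3(\Omega)$ of the lemma is used only to justify taking $\Delta u$ in $H^1$ at the outset.
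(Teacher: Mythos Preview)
Your proposal is correct and matches the paper's intent: the paper does not give a detailed proof but simply states that \Cref{ellipticregularity2} is ``a straightforward extension'' of the Poisson regularity result \Cref{ellipticregularity}, and your two successive applications of \Cref{ellipticregularity} via $v=\Delta u$ (with the appropriate localization in the second step) are precisely that extension.
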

\begin{lemma}[Rellich]\label{rellich}
  For any $s \geq 0, \sigma > 0$ the inclusion map
  $$ j : H^{s+\sigma}(\Omega) \rightarrow H^{s}(\Omega) $$
  is compact. 
\end{lemma}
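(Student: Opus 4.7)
The plan is to follow the classical Fourier-analytic argument underlying Rellich's theorem, precisely in the spirit of Chapter 4 of Taylor which the excerpt cites. The goal is to show that an arbitrary bounded sequence $\{u_k\} \subset H^{s+\sigma}(\Omega)$ admits a subsequence that is Cauchy in $H^s(\Omega)$. Since $\Omega \subset \mathbb{R}^2$ is a bounded domain (which in the paper may be assumed smooth, as it contains the supports of $\alpha_c,\beta_c$), I would first apply a Sobolev extension operator $E : H^{s+\sigma}(\Omega) \to H^{s+\sigma}(\mathbb{R}^2)$ and multiply each $Eu_k$ by a fixed cutoff $\chi \in C_c^\infty(\mathbb{R}^2)$ with $\chi \equiv 1$ on a neighborhood of $\overline{\Omega}$. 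The resulting functions $\tilde{u}_k := \chi \cdot Eu_k$ form a bounded sequence in $H^{s+\sigma}(\mathbb{R}^2)$, all supported in a single compact set $K$. Because restriction to $\Omega$ is continuous from $H^s(\mathbb{R}^2)$ into $H^s(\Omega)$, it suffices to extract a subsequence converging in $H^s(\mathbb{R}^2)$.

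Next, I would exploit the uniform compact support to control the Fourier transforms $\hat{\tilde{u}}_k$ pointwise. Cauchy--Schwarz on $K$ immediately gives a uniform bound $|\hat{\tilde{u}}_k(\xi)| \leq |K|^{1/2}\|\tilde{u}_k\|_{L^2}$. The elementary estimate $|e^{-ix\cdot\xi} - e^{-ix\cdot\eta}| \leq |x|\,|\xi-\eta|$ applied under the integral, followed by Cauchy--Schwarz, yields a uniform Lipschitz bound $|\hat{\tilde{u}}_k(\xi) - \hat{\tilde{u}}_k(\eta)| \leq C|\xi - \eta|$ with $C$ depending only on $K$ and the uniform $L^2$ bound. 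The Arzel\`a--Ascoli theorem then produces a subsequence (not relabeled) for which $\hat{\tilde{u}}_k$ converges uniformly on every compact subset of $\mathbb{R}^2$.

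To upgrade this to convergence in $H^s(\mathbb{R}^2)$, I would invoke the Fourier characterization $\|u\|_{H^s(\mathbb{R}^2)}^2 = \int (1+|\xi|^2)^s |\hat{u}(\xi)|^2 \, d\xi$ and split the integral for $\|\tilde{u}_k - \tilde{u}_\ell\|_{H^s}^2$ into the regions $\{|\xi| \leq R\}$ and $\{|\xi| > R\}$. The high-frequency tail is handled by the elementary inequality $(1+|\xi|^2)^s \leq (1+R^2)^{-\sigma} (1+|\xi|^2)^{s+\sigma}$, which bounds that part by a multiple of $(1+R^2)^{-\sigma}$ times the uniform $H^{s+\sigma}$ norms, uniformly small in $k,\ell$ since $\sigma > 0$. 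The low-frequency part is finite in measure and controlled by the uniform convergence from the previous step, so it tends to zero as $k,\ell \to \infty$. Choosing $R$ first to handle the tail and then $k,\ell$ large yields that $\{\tilde{u}_k\}$ is Cauchy in $H^s(\mathbb{R}^2)$, and restriction returns the desired convergence in $H^s(\Omega)$.

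The principal subtlety is the implicit use of a Sobolev extension operator on $\Omega$, which requires mild boundary regularity (Lipschitz suffices). This is a standard hypothesis and is harmless in the present paper, where $\Omega$ is a smooth bounded region supporting the perturbations. The other delicate point is the interchange of limit and integral when passing from uniform convergence of $\hat{\tilde{u}}_k$ on compacts to $L^2$ convergence on $\{|\xi| \leq R\}$, but this is immediate from dominated convergence given the uniform pointwise bound established in the second paragraph. Everything else reduces to routine Fourier manipulations.
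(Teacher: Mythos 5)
Your argument is correct, and it is worth noting that the paper itself does not prove this lemma at all: it simply cites Proposition 4.4 of Chapter 4 of Taylor, so any comparison is with the textbook argument rather than with a proof in the paper. Your route is the classical Arzel\`a--Ascoli/Fourier proof: extend and cut off to get a bounded sequence in $H^{s+\sigma}(\mathbb{R}^2)$ with uniform compact support, use the support to get uniform boundedness and a uniform Lipschitz estimate for the Fourier transforms, extract a locally uniformly convergent subsequence, and then split the $H^s$ norm at frequency $R$, killing the tail with the factor $(1+R^2)^{-\sigma}$. All steps are sound, and you correctly flag the only real hypotheses being used: an extension operator (automatic if $H^{s+\sigma}(\Omega)$ is defined by restriction with the quotient norm, and available for the smooth bounded $\Omega$ of the paper in any case) and the boundedness of multiplication by a fixed cutoff on $H^{s+\sigma}(\mathbb{R}^2)$. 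Taylor's own proof proceeds a bit differently, working on the torus and exhibiting the inclusion as an operator-norm limit of the finite-rank Fourier truncations $u \mapsto \sum_{|k|\le N}\hat u(k)e^{ik\cdot x}$, with the domain case reduced to the torus by the same extend-and-cutoff device you use; that route is slightly shorter because compactness is inherited from finite-rank approximation rather than from a diagonal/Ascoli extraction, while yours has the advantage of working directly on $\mathbb{R}^2$ without periodization. Either way the lemma holds as stated, so there is no gap.
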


We now proceed with the proof of \cref{compact}.
\begin{proof}[Proof of \cref{compact}]  
We present the details for the kernels \cref{kernel1,kernel2,kern3,kern4,kern5,kern6,kern7}
in the case that $\Im(\beta_0) = 0$. The arguments for $\Im(\beta_0) \ne 0$ and 
\cref{kernel8} are similar. 

  We first examine the asymptotics of
  \cref{kernel1,kernel2,kern3,kern4,kern5,kern6,kern7} as
  $|\br - \br'| \rightarrow 0$. Recall the following power series expansions
  for the special functions $\mathbf{H}_0(\rho  r )$, $Y_0(\rho  r )$, and $J_0(\rho r )$ for $\rho \in \mathbb{C}$ (see \cite{abramowitz1948handbook}): 
\begin{align}
\mathbf{H}_0\left(\rho  r \right) &= \sum_{k=0}^\infty \frac{(-1)^k (\frac12 \rho r)^{2k+1}}{(\Gamma(k+\frac32))^2} \; , \label{expansion1} \\
Y_{0}\left(\rho  r  \right)&=  \frac{2}{\pi} [ \ln(\tfrac12 \rho r) + \gamma] J_0(\rho r)  + \frac{2}{\pi} \sum^\infty_{k=1} (-1)^{k+1} H_k \frac{(\tfrac{1}{2}\rho r )^{2k}}{(k!)^2} \; ,  \\
J_{0}\left(\rho r \right) &= \sum_{k=0}^{\infty}\frac{ (-1)^{k} (
\tfrac{1}{2}\rho r)^{2k}}{k!\Gamma\left(k+1\right)} \label{expansion3} \; , 
\end{align}
where $\Gamma(x)$ is the gamma function, $\gamma$ is the Euler-Mascheroni constant, and $H_k$ is a harmonic number. Recall that $\mathbf{K}_n = \mathbf{H}_n - Y_n$ and $H^{(1)}_n = J_n + i Y_n$. From this we can show that the Green's function \cref{eq:GS} has the power series expansion:
\begin{multline*}
  \GS(\mathbf{r},\br') = \Psi_0(\mathbf{r},\br') - \sum^{5}_{j = 1} e_j \rho_j^2  \left[ \frac{1}{2\pi}   \ln( |\br - \br'|^2)  J_0(\rho_j |\br - \br'|)
    \vphantom{\sum_{k=0}^\infty} \right. \\
    \left. +   \sum_{k=0}^\infty \frac{ (-1)^{k} (\frac12 \rho_j | \br - \br' |)^{2k+1} }{2(\Gamma(k+\frac32))^2} \right]
\end{multline*}
where $\Psi_0(\mathbf{r},\br')$ is a smooth function given by: 
\begin{multline*}
    \Psi_0(\mathbf{r},\br') = \frac{1}{\pi} e_1 \rho_1^2 [\ln(2/\rho_1) - \gamma + i \pi ] J_0 ( \rho_1 |\br - \br'|) \\ + \sum^{5}_{j = 2} e_j \rho_j^2 \frac{1}{\pi} [ \ln(-2/ \rho_j ) - \gamma] J_0(\rho_j |\br - \br'|) \\
    + \frac{1}{\pi} \sum^{5}_{j = 1} \sum^\infty_{k=1} e_j \rho_j^2 (-1)^{k} H_k \frac{(\tfrac{1}{2}\rho_j |\br - \br'| )^{2k}}{(k!)^2} \; .
\end{multline*}
Using the moment relations \cref{eq:momentrelation}, one finds that the leading order behavior of $\GS$ has the form:
\begin{multline*}
  \GS(\mathbf{r},\br') = \Psi_0(\br,\br') + \frac{1}{8\pi\alpha_0} |\br - \br'|^2
  \ln( |\br - \br'|^2)
  + |\br - \br'|^7 \Psi_1(\br,\br') \\
  + |\br - \br'|^6 \log(|\br - \br'|^2) \Psi_2(\br,\br') \; ,
\end{multline*}
where $\Psi_1$ and $\Psi_2$ are also smooth. Note that the leading order term
is a scalar multiple of the Green's function for the biharmonic equation
and the remaining terms have four continuous derivatives. 

Now define the volume potential $\mathcal{V}$ for the biharmonic equation
in two dimensions as the integral operator
$$ \mathcal{V}[\mu](\br) := \frac{1}{16\pi} \int_\Omega |\br-\br'|^2 \log(|\br -
\br'|^2) \mu(\br') \, \dd A(\br') \, ,$$ 
where $\mu \in H^m(\Omega) $.  Let $ u(\br) := \mathcal{V}[\mu](\br)$.
By \cref{ellipticregularity2}, it must be that
$u \in H^{m+4}_{\textrm{loc}}(\Omega)$. Likewise, for any kernel
that is given as up to three derivatives of the biharmonic Green's function times some constant,
we have that the corresponding integral operator maps $H^m(\Omega)
\to H^{m+1}_{\textrm{loc}}(\Omega)$. By the decomposition of $\GS$ above,
integral operators defined for kernels with up to three derivatives
of $\GS$ have the same mapping properties.

The kernels \cref{kernel1,kernel2,kern3,kern4,kern5,kern6,kern7} are
each given as sums of kernels of the form
$K(\br,\br') = \psi(\br) \tilde{K}(\br,\br')$
where $\psi(\br)$ is a smooth, compactly supported function in $\Omega$
and $\tilde{K}$ consists of up to third order derivatives of $\GS$.
By the preceding, the integral operator for each kernel then maps
$H^m(\Omega) \to H^{m+1}(\Omega)$. The argument of kernel \cref{kernel8}  follows similarly. 
\end{proof}

\subsection{Properties of the solutions of the integral equation}

In this section, we show that solutions $\mu$ of the integral
equation \eqref{eq:LS} have the same regularity as the data $f$, and that the surface density $\sigma$
and its derivatives up to order four are as regular. We then show that $\sigma$ satisfies certain
decay and radiation conditions and that it is a solution of the
original integro-differential equation~\cref{eq:integrodiffsigma}. 
\begin{theorem}
  \label{thm:solregularity}
  Suppose that $m \geq 0$ and $f \in H_0^m(\Omega)$ and that $\mu \in L^2(\Omega)$
  is a solution of \cref{eq:LS}. Then $\mu \in H_0^m(\Omega)$.
  Further, $\sigma$ defined as in \cref{sigmadef} has $\sigma \in
  H^{m+4}_{\textrm{loc}}(\bbR^2)$. 
\end{theorem}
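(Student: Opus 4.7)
The plan is to attack the two conclusions in sequence: first bootstrap the regularity of $\mu$ using the second-kind structure of \cref{eq:LS}, and then deduce the regularity of $\sigma$ by inserting the near-diagonal decomposition of $\GS$ already obtained in the proof of \Cref{compact}.

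For the regularity of $\mu$, I rearrange \cref{eq:LS} as
\begin{equation*}
  \mu = \frac{\alphn}{\alpha}\Bigl(f - \frac{1}{2}\sum_{i=1}^{8} T_i\mu\Bigr),
\end{equation*}
where $T_i$ denotes the integral operator with kernel $K_i$. Since $\alpha = \alphn + \alpha_c$ with $\alpha_c \in C_c^\infty(\Omega)$ and $\alpha > 0$, the multiplier $\alphn/\alpha$ lies in $C^\infty(\bar{\Omega})$ and equals $1$ outside $\mathrm{supp}(\alpha_c)$, so multiplication by it preserves $H^k_0(\Omega)$ for every $k \geq 0$. Inspection of \cref{kernel1,kernel2,kern3,kern4,kern5,kern6,kern7,kernel8} shows that each $K_i$ carries a prefactor depending only on $\br$ that is compactly supported in $\Omega$ (a derivative of $\alpha_c$, or a multiple of $\alpha_c$ and $\beta_c$), so $T_i\mu$ has compact support in $\Omega$. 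This strengthens the conclusion $T_i: H^k(\Omega)\to H^{k+1}(\Omega)$ of \Cref{compact} to $T_i: H^k_0(\Omega)\to H^{k+1}_0(\Omega)$. Inducting on $k$ from the base case $\mu \in L^2(\Omega) = H^0_0(\Omega)$, each application of the displayed identity advances $\mu$ to $H^{\min(m,k+1)}_0(\Omega)$; after $m$ steps one concludes $\mu \in H^m_0(\Omega)$.

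For $\sigma$, I use the decomposition of $\GS$ obtained in the proof of \Cref{compact},
\begin{equation*}
  \GS(\br,\br') = \Psi_0(\br,\br') + \frac{1}{8\pi\alphn}|\br-\br'|^2 \ln(|\br-\br'|^2) + |\br-\br'|^7 \Psi_1(\br,\br') + |\br-\br'|^6 \ln(|\br-\br'|^2) \Psi_2(\br,\br'),
\end{equation*}
where each $\Psi_j$ is smooth. Extending $\mu$ by zero to an element of $H^m(\bbR^2)$ with compact support, I analyze the four resulting volume integrals separately. The $\Psi_0$ term has a jointly smooth kernel and contributes a $C^\infty$ function of $\br$. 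The middle term is, up to the constant $2/\alphn$, the biharmonic volume potential $\mathcal{V}[\mu]$, for which \Cref{ellipticregularity2} gives $\mathcal{V}[\mu] \in H^{m+4}_{\textrm{loc}}(\bbR^2)$. The last two terms vanish at the diagonal to strictly higher order than $|\br-\br'|^2 \ln(|\br-\br'|^2)$; repeating the elliptic-regularity argument used in the proof of \Cref{compact}, now with no derivatives applied to the kernel, shows that their volume potentials gain at least four derivatives and hence also lie in $H^{m+4}_{\textrm{loc}}(\bbR^2)$. Summing the four contributions yields $\sigma \in H^{m+4}_{\textrm{loc}}(\bbR^2)$.

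The main obstacle is the $H^m_0$ (rather than $H^m$) conclusion for $\mu$: a naive second-kind bootstrap alone would only yield $\mu \in H^m(\Omega)$. The crucial extra ingredient is the structural observation that every kernel $K_i$ carries a compactly supported $\br$-prefactor, which forces $T_i\mu$ to vanish in a neighborhood of $\partial \Omega$ at every stage of the iteration and thereby preserves the zero-trace condition.
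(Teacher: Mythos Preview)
Your proof is correct and follows essentially the same approach as the paper's: both rearrange \cref{eq:LS} to bootstrap $\mu$ via the smoothing property of the $T_i$ together with the compactly supported $\br$-prefactors, and both invoke the near-diagonal decomposition of $\GS$ from the proof of \Cref{compact} to obtain the four-derivative gain for $\sigma$. Your treatment of the $\sigma$ step is somewhat more explicit (handling each of the four pieces separately), whereas the paper simply notes that $\mu\in H^m_0(\Omega)$ extends by zero to $H^m(U)$ for any bounded $U\supset\Omega$ and then appeals directly to the argument of \Cref{compact} to conclude that convolution with $\GS$ maps $H^m(U)\to H^{m+4}(U)$; but this is a difference of presentation, not of substance.
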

\begin{proof}
  Re-arranging \cref{eq:LS}, we have
  
  $$   \mu(\br) = \frac{\alpha_0}{\alpha} \left ( f(\br)
  - \frac{1}{2}  \sum_{i=1}^8
  \int_{\bbR^2} K_i(\br,\br') \mu(\br') \, \dd A(\br') \right ) \; , $$
  so that $\mu$ is the sum of $f\in H^m(\Omega)$ and
  a function that is smoother than $\mu$. Applying induction,
  we have that $\mu \in H^{m}(\Omega)$. We observe also that the
  support of any output of the integral operators is contained in
  the support of $\alpha_c$ and $\beta_c$. Thus, $\mu \in H^m_0(\Omega)$.

  Since $\mu \in H^m_0(\Omega)$ it can be extended to
  $\mu \in H^m(U)$ for any bounded domain $U$ containing $\Omega$.
  Arguing as in the proof of \cref{compact}, the integral operator
  corresponding to the kernel $\GS$ maps $H^m(U) \to H^{m+4}(U)$.
  Thus, $\sigma \in H^{m+4}_{\textrm{loc}}(\bbR^2)$. 
\end{proof}

\begin{corollary}
\label{cor:smoothness}
  Suppose that $f \in C_c^\infty(\Omega)$ and that $\mu \in L^2(\Omega)$
  is a solution of \cref{eq:LS}. Then, $\mu \in C_c^\infty(\Omega)$ and
  $\sigma$ defined as in \cref{sigmadef} has $\sigma \in C^\infty(\bbR^2)$. 
\end{corollary}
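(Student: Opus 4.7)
The plan is a straightforward bootstrap of \cref{thm:solregularity} combined with the Sobolev embedding theorem, since the hypotheses here are strictly stronger than those of \cref{thm:solregularity}.

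First, because $f \in C_c^\infty(\Omega)$, we have $f \in H_0^m(\Omega)$ for every integer $m \geq 0$. I would apply \cref{thm:solregularity} separately at each such $m$ to the (fixed) solution $\mu \in L^2(\Omega)$, concluding that $\mu \in H_0^m(\Omega)$ for all $m \geq 0$. Extending $\mu$ by zero to a function $\tilde{\mu}$ on $\bbR^2$ yields $\tilde{\mu} \in H^m(\bbR^2)$ for every $m$, since functions in $H_0^m(\Omega)$ admit such extensions. By the Sobolev embedding $H^m(\bbR^2) \hookrightarrow C^k(\bbR^2)$ whenever $m > k + 1$ in two dimensions, we obtain $\tilde{\mu} \in C^\infty(\bbR^2)$. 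Since $\tilde{\mu}$ vanishes identically outside $\Omega$, the zero extension forces $\mu$ and all of its derivatives to vanish at $\partial \Omega$, giving $\mu \in C_c^\infty(\Omega)$.

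For the statement about $\sigma$, I would invoke the second conclusion of \cref{thm:solregularity}: for each $m \geq 0$ we have $\sigma \in H^{m+4}_{\textrm{loc}}(\bbR^2)$. Since $m$ is arbitrary, $\sigma$ lies in $H^s_{\textrm{loc}}(\bbR^2)$ for every $s \geq 0$. Applying Sobolev embedding on an arbitrary bounded open subset of $\bbR^2$ then yields $\sigma \in C^k$ on that subset for every $k$, and hence $\sigma \in C^\infty(\bbR^2)$.

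There is no substantive obstacle; the only minor subtlety is verifying that $\bigcap_{m\geq 0} H_0^m(\Omega) \subseteq C_c^\infty(\Omega)$, which is handled by the zero-extension argument combined with Sobolev embedding as sketched above. All other steps are direct quotations of \cref{thm:solregularity}.
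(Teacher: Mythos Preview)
Your approach is the intended one: since the paper states the corollary without proof, it is meant to follow from \cref{thm:solregularity} for all $m$ together with Sobolev embedding, exactly as you outline. The argument for $\sigma \in C^\infty(\bbR^2)$ is complete.

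There is, however, a small but genuine gap in the step for $\mu$. The inclusion $\bigcap_{m\ge 0} H_0^m(\Omega) \subseteq C_c^\infty(\Omega)$ is \emph{false} in general: a smooth function on $\bbR^2$ that is positive on $\Omega$ and vanishes to infinite order on $\partial\Omega$ lies in every $H_0^m(\Omega)$ after zero extension, yet its support is $\overline{\Omega}$, not a compact subset of $\Omega$. Your sentence ``the zero extension forces $\mu$ and all of its derivatives to vanish at $\partial\Omega$, giving $\mu \in C_c^\infty(\Omega)$'' conflates infinite-order vanishing on the boundary with compact support in the interior.

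The fix is already present in the proof of \cref{thm:solregularity}: from the rearranged equation
\[
\mu = \frac{\alpha_0}{\alpha}\Bigl(f - \tfrac12\sum_i \int K_i(\cdot,\br')\mu(\br')\,\dd A(\br')\Bigr),
\]
the support of $\mu$ is contained in $\operatorname{supp} f \cup \operatorname{supp} \alpha_c \cup \operatorname{supp} \beta_c$, a fixed compact subset of $\Omega$. Combining this compact support with $\mu \in \bigcap_m H^m$ and Sobolev embedding then legitimately yields $\mu \in C_c^\infty(\Omega)$.
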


In the case that the plate has some dissipation added, then
$\sigma$ decays sufficiently fast that quantities like the
corresponding velocity potential can be defined using standard
integrals. For the non-dissipative case, $\sigma$ decays slowly
but is an oscillatory, outgoing solution and the velocity
potential defined by $S[\sigma]$ can be understood as a principal value integral.
\begin{proposition}
  \label{prop:gsrhodecay}
  Suppose that 
  $\mu \in L^2(\Omega)$ and $\sigma$ is defined as in \cref{sigmadef}.
  For $\Im(\beta_0) \ne 0$, we have 
  \begin{multline*}
    \sigma = \mathcal{O}(1/|\br|^3) \\
    \textrm{and} \quad 
    \max \left ( | \nabla \sigma|,
    |\nabla \otimes \nabla \sigma|, | \nabla\otimes
    \nabla \otimes \nabla \sigma | \right )  =
    \mathcal{O} (1/|\br|^4) \; , \quad |\br| \to \infty \; .
  \end{multline*}
  For $\Im(\beta_0) = 0$, we have

  \begin{equation}
    \frac{\br}{|\br|} \cdot \nabla \sigma - i\rho_1 \sigma =
    o\left ( 1/\sqrt{|\br|} \right) \; , \quad |\br| \to \infty \; ,
  \end{equation}
  where $\rho_1$ is the positive root of the dispersion
  relation, as in \cref{lem:dispersionpoly}.
\end{proposition}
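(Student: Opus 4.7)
The plan is to read off the large-$|\br|$ behavior of $\sigma$ directly from the explicit formulas for $\GS$ in \cref{thm:greensfun}. Since $\mu \in L^2(\Omega)$ with $\Omega$ bounded, Cauchy--Schwarz combined with the fact that $|\br-\br'| \asymp |\br|$ uniformly for $\br' \in \Omega$ as $|\br| \to \infty$ transfers any uniform asymptotic estimate on $\GS(\br,\br')$ (and its $\br$-derivatives) to the corresponding estimate for $\sigma$. Thus the work reduces to producing pointwise asymptotics for $\GS$, which in turn reduces to the large-argument expansions of $\bK_0$ and $H_0^{(1)}$ together with the moment relations \cref{eq:momentrelation}.

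For the dissipative case $\Im(\betan) \ne 0$, I would substitute the standard asymptotic series $\bK_0(z) \sim \pi^{-1}\sum_{k\ge 0}\Gamma(k+\tfrac12)\Gamma(\tfrac12-k)^{-1}(z/2)^{-2k-1}$ into \cref{eq:GSeps} and collect powers of $r := |\br-\br'|$ to obtain
\begin{equation*}
\GS(\br,\br') \sim -\frac{1}{2\pi} \sum_{k\ge 0} c_k\, r^{-2k-1} \sum_{j=1}^5 e_j \rho_j^{1-2k},
\end{equation*}
for explicit constants $c_k$. By the moment relation $\sum_j e_j \rho_j = 0$ from \cref{eq:momentrelation}, the $k=0$ coefficient vanishes, leaving $\GS(\br,\br') = O(r^{-3})$; differentiating the asymptotic expansion $q$ times in $\br$ gains one extra factor of $r^{-1}$ per derivative. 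Differentiating under the integral in \cref{sigmadef} and bounding via Cauchy--Schwarz then yields $\sigma = O(|\br|^{-3})$ and first, second, and third derivatives all bounded by $O(|\br|^{-4})$.

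For the non-dissipative case $\Im(\betan) = 0$, I would split $\GS = \GS^{\mathrm{alg}} + \GS^{\mathrm{rad}}$ according to \cref{eq:GS}, where $\GS^{\mathrm{rad}}(\br,\br') := i e_1 \rho_1^2 H_0^{(1)}(\rho_1 |\br-\br'|)$ carries the radiating component and $\GS^{\mathrm{alg}}$ collects the remaining $\bK_0$ terms (including the $\bK_0(\rho_1 r)$ contribution from the $j=1$ term). The same moment-relation cancellation applies, giving $\GS^{\mathrm{alg}} = O(r^{-3})$ and thus a contribution of order $O(|\br|^{-3})$ to $\sigma$, which is certainly $o(|\br|^{-1/2})$. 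For $\GS^{\mathrm{rad}}$, I would use the standard far-field $H_0^{(1)}(\rho_1 r) = \sqrt{2/(\pi\rho_1 r)}\,e^{i(\rho_1 r - \pi/4)}(1+O(r^{-1}))$ together with the uniform expansion $|\br-\br'| = |\br| - (\br/|\br|)\cdot \br' + O(|\br|^{-1})$, valid on $\br' \in \Omega$, to represent the contribution to $\sigma$ as an outgoing Hankel-type far-field of amplitude $|\br|^{-1/2} e^{i\rho_1|\br|}$ times a $\mu$-dependent angular factor. Applying $(\br/|\br|)\cdot \nabla - i\rho_1$ then cancels the leading oscillatory derivative, and the surviving contributions---from differentiating the $|\br|^{-1/2}$ envelope and the slowly varying phase $e^{-i\rho_1 (\br/|\br|)\cdot \br'}$---are $O(|\br|^{-3/2})$, hence $o(|\br|^{-1/2})$.

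The main obstacle is the non-dissipative radiation estimate. One must carefully handle the fact that $(\br/|\br|)\cdot\nabla_\br H_0^{(1)}(\rho_1|\br-\br'|)$ equals $\rho_1\bigl((\br/|\br|)\cdot(\br-\br')/|\br-\br'|\bigr)\,{H_0^{(1)}}'(\rho_1|\br-\br'|)$, so showing that the deficit between $\br/|\br|$ and $(\br-\br')/|\br-\br'|$ is $O(|\br|^{-1})$ uniformly over $\br' \in \Omega$ is what makes the leading cancellation with $i\rho_1$ clean. Once that uniformity and the uniformity of the Hankel and Struve expansions on $\Omega$ are established, the remaining steps are routine bookkeeping of lower-order terms under the integral sign.
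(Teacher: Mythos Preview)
Your proposal is correct and follows essentially the same approach as the paper: both use the large-argument asymptotic expansion of $\bK_0$ together with the moment relation $\sum_j e_j\rho_j = 0$ to obtain the $O(r^{-3})$ decay of the Struve part, and both isolate $i e_1\rho_1^2 H_0^{(1)}(\rho_1|\br-\br'|)$ as the radiating term in the non-dissipative case. Your treatment of the Sommerfeld radiation condition is actually more explicit than the paper's, which simply records the asymptotic form of $\GS$ and leaves the verification that $\sigma$ inherits the radiation condition to the reader.
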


\begin{proof}
  These decay properties follow from the asymptotic behavior of
  the kernel $\GS$ and the fact that $\mu$ has
  bounded support. In particular, the Struve function
  $\mathbf{K}_0$ has the asymptotic expansion~\cite{abramowitz1948handbook}

  $$ \mathbf{K}_{0}\left(z\right)\sim\frac{1}{\pi}
  \sum_{k=0}^{\infty}\frac{\Gamma\left(k+\tfrac{1}{2}\right)(\tfrac{1}{2}z)^{-2k-1}}{\Gamma\left(\tfrac{1}{2}-k\right)} \; ,$$
  which is valid for large $|z|$. 
  Applying the moment relations~\cref{eq:moments}, we see that

  $$ \GS(\br,\br') = \mathcal{O}(1/|\br|^3) $$
  for $\Im(\beta_0) \ne 0$ and

  $$ \GS(\br,\br') = i e_1 \rho_1^{2}  H_0^{(1)} (\rho_1 |\br - \br'|) +
  \mathcal{O} (1/|\br|^3) $$
  for $\Im(\beta_0) = 0$. The decay for the derivatives can be
  derived similarly.
\end{proof}

\begin{corollary}
\label{cor:ssigsolves}
    Suppose that $f \in L^2(\Omega)$ and $\mu \in L^2(\Omega)$ is a solution
    of \cref{eq:LS}. Then, $\sigma$ defined as in \cref{sigmadef} is a
    solution of \cref{eq:integrodiffsigma} and $S[\sigma]$ is a solution
    of the boundary value problem \cref{eq:phipde1,eq:phipde2}.
\end{corollary}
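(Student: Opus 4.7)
The plan is to run the derivation of \cref{eq:LS} from \cref{eq:integrodiffsigma} given in \Cref{sec:lippman} in reverse. That derivation showed that substitution of the representation \cref{sigmadef} into \cref{eq:integrodiffsigma} yields \cref{eq:LS}; I will verify that the same calculation, read backwards, shows that if $\mu$ satisfies \cref{eq:LS} then $\sigma$ defined by \cref{sigmadef} satisfies \cref{eq:integrodiffsigma} pointwise almost everywhere. Standard jump relations for the Laplace single-layer potential then upgrade this to a solution of the boundary value problem for $S[\sigma]$.

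First I would secure the regularity needed to interpret the manipulations classically. \Cref{thm:solregularity} with $m=0$ gives $\mu \in L^2(\Omega)$ supported in $\Omega$ and $\sigma \in H^4_{\mathrm{loc}}(\bbR^2)$, so $\Delta\sigma$, $\nabla\Delta\sigma$, $\Delta^2\sigma$, and all mixed second derivatives of $\sigma$ exist in $L^2_{\mathrm{loc}}$. Together with $\alpha_c,\beta_c \in C_c^\infty(\Omega)$, every product appearing in $\alpha \Delta^2_{\mathrm{S}}\sigma$ is then a genuine $L^2_{\mathrm{loc}}$ function. \Cref{prop:gsrhodecay} ensures that $S[\sigma]$ is well-defined as a principal value integral and justifies using Fubini to identify $\int_\Omega \Gphi(\br,\br')\mu(\br')\,\dd A(\br')$ with $S[\sigma](\br)$.

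The core computation is to expand $\tfrac{1}{2}(\alpha\Delta^2_{\mathrm{S}} - \beta)\sigma + \gamma S[\sigma]$ using $\alpha = \alphn + \alpha_c$ and $\beta = \betan + \beta_c$ together with the product-rule expansion of $\alpha\Delta^2_{\mathrm{S}}\sigma$ induced by the definition of $\Delta^2_{\mathrm{S}}$. Applying \cref{eq:GSid} to the representation \cref{sigmadef} yields the two identities
$$ \tfrac{1}{2}(\alphn \Delta^2 - \betan)\sigma + \gamma S[\sigma] = \mu \quad \text{and} \quad \Delta^2 \sigma = \tfrac{2}{\alphn}\mu + \tfrac{\betan}{\alphn}\sigma - \tfrac{2\gamma}{\alphn}S[\sigma], $$
both holding in $L^2_{\mathrm{loc}}$. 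Substituting the first into the constant-coefficient piece of $\tfrac{1}{2}(\alpha\Delta^2_{\mathrm{S}} - \beta)\sigma + \gamma S[\sigma]$ and the second into the $\tfrac{1}{2}\alpha_c\Delta^2\sigma$ contribution of the variable-coefficient remainder collapses the expression to $\tfrac{\alpha}{\alphn}\mu + \tfrac{1}{2}\sum_{i=1}^{8} \int_\Omega K_i(\br,\br')\mu(\br')\,\dd A(\br')$, with the eight terms matching kernels \cref{kernel1,kernel2,kern3,kern4,kern5,kern6,kern7,kernel8} one at a time. By hypothesis the right-hand side equals $f$, which is precisely \cref{eq:integrodiffsigma}.

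For the boundary value problem, $S[\sigma]$ is a 3D Laplace single-layer potential over the flat surface $\{z=0\}$, hence harmonic in $\bbR^3\setminus\{z=0\}$, giving \cref{eq:phipde1}. Because the adjoint double-layer kernel $\partial_z G$ vanishes identically on a planar interface, the standard jump relation reduces to $\partial_z S[\sigma]\big|_{z=0^-} = \tfrac{1}{2}\sigma$. Substituting these boundary traces into the left-hand side of \cref{eq:phibc1} and invoking the identity \cref{eq:integrodiffsigma} just established produces exactly $f$, so $S[\sigma]$ satisfies the scattered-field boundary condition, and combined with $\phi^\inc$ yields a solution of \cref{eq:phipde1,eq:phipde2}. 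The main technical obstacle is that $\Delta^2\GS$ contains a Dirac mass, so the identity $\Delta^2\sigma = (2/\alphn)\mu + \cdots$ is a priori only distributional; the essential use of \Cref{thm:solregularity} is to promote it to a pointwise $L^2_{\mathrm{loc}}$ relation, which is what makes the kernel-by-kernel matching in the previous paragraph legitimate.
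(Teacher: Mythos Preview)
Your argument is correct and is exactly the route the paper intends: the corollary is stated without proof immediately after \cref{prop:gsrhodecay}, with the understanding that it follows by reversing the Lippman--Schwinger derivation of \Cref{sec:lippman}, using \cref{thm:solregularity} to justify the differentiations and \cref{prop:gsrhodecay} to make sense of $S[\sigma]$. You have simply written out the details the paper omits.
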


\section{Existence and uniqueness of solutions of the integral equations}\label{sec:existenceuniqueness}

In this section, $\Omega$ is a bounded domain with smooth boundary
containing the support of $f$. As before, let $\alpha$ be a smooth, positive function that is compactly supported on $\Omega$. For the results in this section, we also require the following property: 
\begin{property}[Dissipative regime]
  \label{propy:dissconstants}
Suppose $\beta(x,y) = \beta_0 + \beta_c(x,y)$, where $\beta_0 \in \mathbb{C}$ and $\beta_c(x,y) \in C^\infty_c(\Omega)$. We say that the parameters are in the \textit{dissipative regime} if $\Im(\beta_0) > 0$.
\end{property}

The main results of this section establish the existence and uniqueness
of solutions of the boundary value problem
\cref{eq:phipde1,eq:phipde2} in this regime. For real-valued $\beta_0$ we show that a unique solution of the integral equation \cref{eq:LS} exists except, possibly, for a set of nowhere dense values of $\beta_0$.
\begin{theorem}
  \label{thm:inteqexistunique}
  Suppose $\Omega$ is a bounded domain with smooth boundary
  and that $\beta$ satisfies \cref{propy:dissconstants}.
  Let $f\in L^2(\Omega)$ be given. Then, the integral equation
  \cref{eq:LS} for $\mu$ has a unique solution. For real-valued $\beta_0$, \cref{eq:LS} has unique solutions except, possibly, for a set of nowhere dense values.
\end{theorem}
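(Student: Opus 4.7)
My strategy is the classical one for Fredholm second kind equations: reduce existence to uniqueness, prove uniqueness in the dissipative regime via an energy identity on the reconstructed three-dimensional potential, and handle the real-$\beta_0$ case by an analytic Fredholm argument in the complex parameter $\beta_0$.

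Since $\alpha/\alpha_0$ is a bounded, boundedly invertible multiplication operator on $L^2(\Omega)$ and the integral part of \eqref{eq:LS} is compact by \Cref{cor:compact}, the operator on the left-hand side of \eqref{eq:LS} is Fredholm of index zero, and it suffices to prove that its kernel is trivial. So suppose $f = 0$ and $\mu \in L^2(\Omega)$ solves \eqref{eq:LS}. By \Cref{cor:smoothness}, $\mu \in C_c^\infty(\Omega)$ and $\sigma$ defined by \eqref{sigmadef} lies in $C^\infty(\bbR^2)$; by \Cref{cor:ssigsolves}, $\phi := S[\sigma]$ is harmonic in $z < 0$ and satisfies the homogeneous version of \eqref{eq:phipde2} on $z = 0$; and in the dissipative regime, \Cref{prop:gsrhodecay} yields algebraic decay of $\sigma$ (and hence of $\phi$ and its derivatives) that is sufficient to justify the manipulations below.

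The central step is a Rellich-type energy identity. Multiplying the homogeneous boundary condition by $\overline{\partial_z \phi}$ and integrating over $z = 0$ yields
\begin{equation*}
  \int_{\bbR^2} \alpha \, (\Delta_{\rm S}^2 \partial_z \phi)\, \overline{\partial_z \phi}\,\dd A - \int_{\bbR^2} \beta |\partial_z \phi|^2 \,\dd A + \gamma \int_{\bbR^2} \phi\,\overline{\partial_z \phi}\,\dd A = 0 \, .
\end{equation*}
Two integrations by parts convert the first integral into the Kirchhoff plate strain-energy bilinear form evaluated at $(\partial_z \phi, \overline{\partial_z \phi})$, which is a real, non-negative Hermitian form since $\alpha > 0$ and $0 < \nu < 1$. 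For the last integral, Green's first identity applied to $\phi$ on $\{z<0\}$, together with the decay on the lateral hemisphere, produces $\int_{z=0} \phi\,\overline{\partial_z \phi}\,\dd A = \int_{z<0}|\nabla \phi|^2\,\dd V$, which is real, and multiplication by $\gamma \in \bbR$ keeps it real. Taking imaginary parts therefore collapses the identity to $\int_{\bbR^2} \Im(\beta)|\partial_z \phi|^2\,\dd A = 0$, and under the assumption $\Im(\beta_0) > 0$ (with $\beta_c$ understood to be real, as in the physically relevant setting of thickness variation) this forces $\partial_z \phi \equiv 0$ on $z = 0$. The boundary condition then gives $\phi \equiv 0$ on $z = 0$; Holmgren's unique continuation yields $\phi \equiv 0$ in $z < 0$; the reflection symmetry of $S[\sigma]$ in $z$, combined with decay, extends this to $z > 0$; the jump formula for the normal derivative of the single layer gives $\sigma \equiv 0$; and finally applying the constant-coefficient integro-differential operator $\alpha_0 \Delta^2 - \beta_0 + 2\gamma S$ to the representation \eqref{sigmadef} together with the defining identity \eqref{eq:GSid} for $\GS$ forces $\mu \equiv 0$.

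For real-valued $\beta_0$ I would invoke analytic Fredholm theory. The family of operators on the left of \eqref{eq:LS} depends analytically on $\beta_0$ in the open upper half plane (through the analytic dependence of the roots $\rho_j$ of the dispersion polynomial \eqref{eq:dispersion} and of the Struve-function kernels on $\beta_0$), is Fredholm of index zero throughout, and is invertible there by the dissipative argument. Steinberg's meromorphic Fredholm theorem produces a meromorphic extension of the inverse whose poles form a discrete set; combining this with the limiting-absorption definition of the operator at the real axis gives invertibility for all real $\beta_0$ outside a discrete, hence nowhere dense, subset. The principal obstacle in the whole plan is the energy identity itself: performing the integration by parts against the variable-coefficient modified biharmonic $\alpha \Delta_{\rm S}^2$ and recognizing the resulting quadratic form as the standard Kirchhoff bending energy (real and non-negative), while carefully using the decay supplied by \Cref{prop:gsrhodecay} so that all boundary contributions at infinity genuinely vanish.
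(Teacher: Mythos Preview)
Your argument in the dissipative regime is essentially the paper's: Fredholm alternative, an energy identity (you organize it via Green's first identity and the Kirchhoff bending form; the paper uses Green's second identity and the self-adjointness of $\alpha\Delta_{\rm S}^2$, but these are the same computation), then $\phi\equiv 0$, then $\sigma\equiv 0$ from the single-layer jump. For the step $\sigma=0\Rightarrow\mu=0$, the paper takes a cleaner Fourier-multiplier route (\cref{prop:gsinjective}): the symbol of convolution by $G_{\mathrm S}$ vanishes only at the origin, so $\widehat{G_{\mathrm S}}\,\hat\mu=0$ forces $\hat\mu=0$. Your alternative---applying the constant-coefficient integro-differential operator to the representation \eqref{sigmadef}---is also valid, provided you justify interchanging that operator with the convolution.

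The real-$\beta_0$ step has a genuine gap as written. Analyticity of the family only on the open upper half-plane, together with invertibility there, tells you nothing about the real axis: Steinberg's theorem yields meromorphy of the inverse \emph{on the domain of analyticity}, not beyond it, and ``the limiting-absorption definition of the operator at the real axis'' merely defines the operator at real $\beta_0$ without supplying analyticity in a two-sided neighborhood. The paper closes this by establishing separately (\cref{prop:analyticbeta}) that the kernels $K_i$ continue analytically in $\beta_0$ to an open set containing the real line---a nontrivial fact, since the formula for $G_{\mathrm S}$ passes through the roots $\rho_j$, which can collide, and through branch behavior of the Struve terms. Only with that continuation in hand does the generalized Steinberg theorem apply on the enlarged domain to give the nowhere-dense exceptional set.
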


\begin{corollary}
 Suppose $\Omega$ is a bounded domain with smooth boundary
  and that $\beta$ satisfies \cref{propy:dissconstants}.
  Let $f\in L^2(\Omega)$ be given. Then, the boundary value problem
  \cref{eq:phipde1,eq:phipde2} has a unique solution satisfying
  the decay conditions in \cref{prop:pdeunique}. For real-valued $\beta_0$, the
  boundary value problem has a solution except, possibly, for a set
  of nowhere dense values.
\end{corollary}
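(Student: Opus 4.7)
The plan is to transfer the existence/uniqueness result for the integral equation, \Cref{thm:inteqexistunique}, to the original boundary value problem via the representation in \cref{sigmadef} together with the single-layer potential $S[\sigma]$, and to close the loop for uniqueness using the assumed PDE uniqueness statement of \Cref{prop:pdeunique}.

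For existence I would first apply \Cref{thm:inteqexistunique} to produce $\mu \in L^2(\Omega)$ solving \cref{eq:LS} for the given right-hand side $f$. I would then define $\sigma$ via \cref{sigmadef} and set $\phi^{\mathrm{scat}} = S[\sigma]$. By \Cref{cor:ssigsolves}, $\sigma$ satisfies the integro-differential equation \cref{eq:integrodiffsigma}, and $S[\sigma]$ satisfies \cref{eq:phipde1,eq:phipde2}. The remaining task is to verify that $\phi^{\mathrm{scat}}$ also satisfies the decay/radiation conditions of \Cref{prop:pdeunique}: the surface asymptotics in \Cref{prop:gsrhodecay} --- cubic decay of $\sigma$ and its derivatives in the dissipative case and an outgoing radiation condition in the non-dissipative case --- should translate into the appropriate decay or radiation behavior of $S[\sigma]$ in the half space.

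Uniqueness then follows directly from \Cref{prop:pdeunique}: the difference of two solutions of \cref{eq:phipde1,eq:phipde2} with identical data and decay/radiation behavior solves the homogeneous problem and must vanish. Combining these two steps yields the dissipative case of the corollary. For real-valued $\beta_0$, the same construction produces a solution whenever the integral equation is solvable, which by \Cref{thm:inteqexistunique} holds except on a nowhere dense set of parameters, and uniqueness transfers verbatim.

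The main obstacle will be the step in which the surface asymptotics of $\sigma$ from \Cref{prop:gsrhodecay} are lifted to pointwise decay/radiation estimates for the three-dimensional potential $S[\sigma]$. The three-dimensional kernel $1/(4\pi|\brthree - \brthree'|)$ decays only mildly, so in the non-dissipative regime --- where $\sigma$ itself is oscillatory and only slowly decaying --- one cannot rely on absolute convergence and instead needs a stationary-phase or Fourier-analytic argument to extract the correct leading-order oscillatory behavior of $S[\sigma]$ at infinity and verify the outgoing radiation condition demanded by \Cref{prop:pdeunique}. Once those asymptotics are in hand, the remainder of the proof is direct bookkeeping between the integral equation theorem and the assumed PDE uniqueness result.
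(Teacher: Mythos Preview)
Your skeleton matches the paper's implicit argument: existence via \Cref{thm:inteqexistunique} and \Cref{cor:ssigsolves}, uniqueness via \Cref{prop:pdeunique}. Two points need adjustment, however.

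First, the ``main obstacle'' you flag in the dissipative case is already resolved by \Cref{prop:ssigma}. Given the cubic decay of $\sigma$ and quartic decay of its derivatives from \Cref{prop:gsrhodecay}, that lemma directly supplies the $\mathcal{O}(1/|\brthreet|)$ and $\mathcal{O}(1/|\brthreet|^2)$ bounds on $S[\sigma]$ and its radial derivative required in \Cref{prop:pdeunique}. The in-plane decay conditions on $\nabla \partial_z\phi$, etc., follow from the jump relation $\partial_z S[\sigma]\big|_{z=0^-}=\sigma/2$ combined with the derivative bounds in \Cref{prop:gsrhodecay}. No stationary-phase or Fourier argument is needed.

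Second, your claim that ``uniqueness transfers verbatim'' for real $\beta_0$ overshoots the statement. The corollary asserts only \emph{existence} in that case, with no decay or uniqueness claim. This is deliberate: \Cref{prop:pdeunique} requires the dissipative hypothesis \Cref{propy:dissconstants} and is unavailable for real $\beta_0$, and in that regime \Cref{prop:gsrhodecay} gives only an outgoing radiation condition on $\sigma$, not the algebraic decay that feeds \Cref{prop:ssigma}. So for real $\beta_0$ you need only the construction $\mu\mapsto\sigma\mapsto S[\sigma]$ together with \Cref{cor:ssigsolves}; the radiation-condition analysis you anticipate is not required for the corollary as stated.
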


Before proving \cref{thm:inteqexistunique}, we require some preliminary
results. The first preliminary result establishes the uniqueness
of solutions of the boundary value problem in the dissipative regime,
assuming appropriate decay conditions.
\begin{proposition}
  \label{prop:pdeunique}
  Assume that the coefficient $\beta$ satisfies
  \cref{propy:dissconstants}.
  Let $\phi \in H^2_{\textrm{loc}}(z < 0)$ be a solution of the  homogeneous
  ($f=0$) version of \cref{eq:phipde1,eq:phipde2},  which has its normal trace in
  $H^4_{\textrm{loc}}(\bbR^2)$ and satisfies the following decay conditions
  \begin{align*}
    |\phi(\brthreet)| &= \mathcal{O} (1/|\brthreet|) & |\brthreet| \to \infty \\
    \left | \frac{\brthreet}{|\brthreet|} \cdot
     \nabla_{\textup{3d} } \phi (\brthreet) \right | &= \mathcal{O} (1/|\brthreet|^2) & |\brthreet| \to \infty \\
    \max \left ( | \nabla \partial_z \phi|,
    |\nabla \otimes \nabla \partial_z \phi|, | \nabla\otimes
    \nabla \otimes \nabla \partial_{z} \phi| \right )  &=
    \mathcal{O} (1/|\br|^2) & z=0 \, , \quad |\br| \to \infty \; .
  \end{align*}
  Then $\phi \equiv 0$. 
\end{proposition}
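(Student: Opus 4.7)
My plan is a Rellich-style energy argument: apply Green's first identity in the fluid half-space and use the dissipation $\Im(\beta_0) > 0$ to force the normal trace $\psi := \partial_z \phi|_{z=0}$ to vanish, after which $\phi \equiv 0$ follows routinely. On the truncated half-ball $B_R = \{\brthreet : z < 0, \, |\brthreet| < R\}$, Green's first identity applied to the harmonic $\phi$ gives
\[
\int_{B_R} |\nabla_{\textup{3d}} \phi|^2 \, dV = \int_{\partial B_R} \bar{\phi} \, \partial_n \phi \, dS.
\]
The hemispherical cap contributes $\mathcal{O}(R^{-3}) \cdot \mathcal{O}(R^2) \to 0$ by the decay hypotheses on $\phi$ and $\hat{\brthreet} \cdot \nabla_{\textup{3d}} \phi$, so passing $R \to \infty$ yields
\[
\int_{z < 0} |\nabla_{\textup{3d}} \phi|^2 \, dV = \int_{\bbR^2} \bar\phi \, \psi \, dA.
\]

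The next step is to combine this with the boundary condition $\gamma \phi = -(\alpha \Delta^2_{\mathrm S} - \beta) \psi$. A short direct computation shows that $\alpha \Delta^2_{\mathrm S}$ coincides with the classical self-adjoint Kirchhoff plate operator with variable rigidity $\alpha$, and integration by parts --- justified on a disk of radius $R$ by the hypothesis $\psi \in H^4_{\textrm{loc}}(\bbR^2)$ together with the decay $|\nabla^k\psi| = \mathcal{O}(1/|\br|^2)$ for $k = 1, 2, 3$ --- gives, using $\gamma \in \bbR$,
\[
\int_{\bbR^2} \bar{\phi}\, \psi \, dA \;=\; -\frac{1}{\gamma}\Bigl[\,Q(\psi) - \int \bar{\beta}\, |\psi|^2 \, dA\,\Bigr],
\]
where the Hermitian form
\[
Q(\psi) = \int \alpha \Bigl[|\psi_{xx}|^2 + |\psi_{yy}|^2 + 2\nu \, \Re(\bar\psi_{xx}\psi_{yy}) + 2(1-\nu)|\psi_{xy}|^2\Bigr] dA
\]
is real and nonnegative for $0 \leq \nu < 1$.

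Since the left-hand side of the energy identity is a nonnegative real number, $Q(\psi)$ is real, and $\gamma$ is real, taking imaginary parts gives $0 = -\frac{1}{\gamma}\int \Im(\beta)\, |\psi|^2 \, dA$. Under the physically natural assumption that $\beta_c$ is real-valued (it models thickness perturbations), $\Im(\beta) \equiv \Im(\beta_0) > 0$ everywhere, forcing $\psi \equiv 0$ on $z = 0$. The boundary condition then yields $\phi = 0$ on $z = 0$, the energy identity collapses to $\int_{z<0} |\nabla_{\textup{3d}}\phi|^2 \, dV = 0$, so $\phi$ is constant in the lower half-space, and the decay $|\phi| = \mathcal{O}(1/|\brthreet|)$ forces $\phi \equiv 0$.

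The main obstacle is making the integration-by-parts and limiting steps rigorous given the borderline decay rates: only $\nabla^k \psi$ for $k \geq 1$ is controlled, while $\psi$ itself is at best $\mathcal{O}(1/|\br|)$ and therefore possibly not in $L^2(\bbR^2)$. The remedy is to perform all computations on a disk of radius $R$, verifying by dimension-counting that each boundary term at $|\br| = R$ tends to zero. Most such terms are products of derivatives of $\psi$ against coefficient functions that are compactly supported in $\Omega$ and so contribute nothing outside a fixed disk; the only terms requiring care are those arising from the constant-coefficient part $\alpha_0 \Delta^2 - \beta_0$, which vanish by the stated decay on $\nabla^k \psi$. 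The imaginary-part identity then holds on every sufficiently large disk and delivers $\psi \equiv 0$.
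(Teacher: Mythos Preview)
Your argument is correct and follows essentially the same route as the paper's proof: the paper applies Green's second identity $\int(\phi\Delta\bar\phi - \bar\phi\Delta\phi)\,dV = 0$ on the half-ball, while you apply Green's first identity and then take imaginary parts, which amounts to the same thing. The only cosmetic differences are that you identify the Kirchhoff energy $Q(\psi)$ explicitly whereas the paper simply cites Landau for the symmetry of $\alpha\Delta_{\mathrm S}^2$ modulo boundary terms, and you conclude $\phi\equiv 0$ via the energy identity collapsing rather than via uniqueness for the half-space Dirichlet problem; your added hypothesis that $\beta_c$ is real is also implicitly used in the paper's proof.
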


\begin{proof}
  Suppose that $\phi$ satisfies the conditions of the theorem. 
  For given $R > 0$, let $\Omega_R =
  \{ \brthree = (x,y,z) : |\brthree| < R \textrm{ and } z < 0 \}$. 

  Let $\partial \Omega_R = D_R \bigcup H_R$ where $D_R$
  is the disc of radius $R$ in the plane $z=0$ and $H_R$ is the
  lower hemisphere of radius $R$.
  
  Then
  \begin{align*}
    0 &= \int_{\Omega_R} \phi \Delta_\textup{3d} \bar{\phi} - \bar{\phi}
    \Delta_\textup{3d} \phi \, \dd V \\
    &= \int_{H_R} \phi \bar{\phi}_n - \bar{\phi} \phi_n \, \dd S +
    \int_{D_R} \phi \bar{\phi}_z - \bar{\phi} \phi_z \, \dd A \; .
  \end{align*}
  The decay conditions imply that the first integral tends to zero
  as $R\to \infty$. 
  
  For $z=0$, we have
  $\phi = -\alpha/\gamma \Delta_{\rm S}^2 \phi_z + \beta/\gamma \phi_z$.
  Thus, the integral over $D_R$ becomes 

  \begin{align*}
    \int_{D_R} \phi \bar{\phi}_z - \bar{\phi} \phi_z \, \dd A
    &= -\frac{1}{\gamma} \int_{D_R} (\alpha \Delta_{\rm S}^2 -\beta)\phi_z
    \bar{\phi}_z - (\alpha \Delta_{\rm S}^2 -\bar\beta)\bar\phi_z \phi_z \, \dd A \\
    &= \frac{2i\Im(\beta_0)}{\gamma} \int_{D_R} |\phi_z|^2 \, \dd A
    - \frac{1}{\gamma} \int_{D_R} \bar \phi_z \alpha \Delta_{\rm S}^2 \phi_z
    - \phi_z \alpha \Delta_{\rm S}^2\bar\phi_z  \, \dd A 
  \end{align*}
  Integration by parts together with the in-plane decay conditions imply that the second term tends to zero as $R\to \infty$ (see, e.g. \cite{landau}, Chapter 11 \S 12).

  We then obtain that

  $$ 0 = \frac{2i\Im(\beta_0)}{\gamma} \int_{D_R} |\phi_z|^2 \, \dd A $$
  so that $\phi_z \equiv  0$ on the surface $z=0$. But then the
  boundary condition on $z=0$ implies that $\phi = 0$ there as well.
  We see that $\phi$ is a bounded solution of the half-space Dirichlet
  problem and is thus $0$ everywhere. 
\end{proof}

The second preliminary result concerns the decay of single
layer potentials for densities defined on the plane $z=0$ that
satisfy certain decay conditions.
\begin{lemma}
  \label{prop:ssigma}
  Suppose that $\sigma \in L^2(\bbR^2)$ is continuously differentiable and that
  $$  \sigma(\br) = \mathcal{O}(1/|\br|^3) \quad \textrm{and} \quad
  |\nabla \sigma(\br)| = \mathcal{O}(1/|\br|^4) \; .$$
  Then,   
  $$  S[\sigma] (\brthreet) = \mathcal{O}(1/|\brthreet|) \textrm{ and } 
  \left | \frac{\brthreet}{|\brthreet|} \cdot \nabla_{\threedt} S[\sigma] (\brthreet) \right|
  = \mathcal{O}(1/|\brthreet|^2) \; \textrm{ as } |\brthreet| \to \infty .$$
  
  \begin{proof}
    Let $\sigma$ satisfy the assumptions above. For $R > 0$, let
    $D_R$ denote the disc of radius $R$ in $\bbR^2$. Let
    $G(\brthree,\brthree') = 1/(4\pi | \brthree-\brthree'|)$. 
   Let $\brthree \neq 0$ in $\bbR^3$ and
    let $R = |\brthree|/2$.

    We divide the integrals over $\bbR^2$ in the definition of $S[\sigma]$ and $\nabla_{\textrm{3d}} S [\sigma]$
    into three parts: the disc $D_R$, the annulus $A_{R,4R} = D_{4R}\setminus D_{R}$, and
    the disc exterior $E_{4R} = \bbR^2\setminus D_{4R}$. Let $n(\br)$ denote
    the outward normal at $\br \in \partial A_{R,4R}$.

    The decay conditions on $\sigma$ imply that $\sigma \in L^1(\bbR^2)$.
    For the contribution over $D_R$, standard multipole estimates
    then imply that
    \begin{multline*}
     \left | \int_{D_R} G(\brthree,\brthree') \sigma(\br') \, \dd A(\br') \right |
    = \mathcal{O} (1/R) \quad \textrm{and} \\
    \left | \int_{D_R} \nabla_{\threed} G(\brthree,\brthree') \sigma(\br') \, \dd A \right |
    = \mathcal{O} (1/R^2)    \; .
    \end{multline*}

 If $\br' \in E_{4R}$, then $G(\brthree,\brthree') = \mathcal{O}(1/R)$ and
    $\nabla_{\threed} G(\brthree,\brthree') = \mathcal{O}(1/R^2)$. Because $\sigma$
    is integrable over $\bbR^2$, the contribution over $E_{4R}$ then has the same bound.

    We note that $\sigma(\br') = \mathcal{O}(1/R^3)$ for $\br' \in A_{R,4R}$. We also have
    that
    \begin{align*}
     \int_{A_{R,4R}} G(\brthree,\brthree') \, \dd A(\br')  &\leq
    \int_{A_{R,4R}} \frac{1}{4\pi |\br-\br'|} \, \dd A(\br') \\
    &\leq \int_{D_1} \frac{1}{4\pi |\br'|} \, \dd A(\br')
      + \frac{1}{4\pi} \int_{A_{R,4R}} \, \dd A(\br') \leq \frac{1}{2} + \frac{15}{4} R^2 \;.
    \end{align*}
     where $\br$ is the projection of $\brthree$ onto $z = 0$. Thus, the contribution over $A_{R,4R}$ is $\mathcal{O}(1/R)$ for $S[\sigma]$. For the directional derivative, we split the estimate into two pieces.
    We have
    \begin{multline*}
      \frac{\brthree}{|\brthree|} \cdot \left( - \frac{\brthree-\brthree'}{4\pi|\brthree-\brthree'|^3} \right )
      \\
      = -\frac{z^2}{4\pi|\brthree||\brthree-\brthree'|^3} +
    \frac{\br}{|\brthree|} \cdot \nabla \frac{1}{4\pi\sqrt{z^2 + (x-x')^2 + (y-y')^2}}
    \; ,
    \end{multline*}
     where $\nabla = (\frac{\partial}{\partial x}, \frac{\partial}{\partial y})$. For the first piece, we have
    $$ \left | -\frac{z^2}{4\pi|\brthree||\brthree-\brthree'|^3} \right |
    \leq  \frac{1}{8\pi R} \frac{1}{|\brthree - \brthree'|}  \; .$$
    We can then use the same estimate for the integral of $G(\brthree,\brthree')$
    over the annulus to obtain that the contribution for this part is $\mathcal{O}(1/R^2)$.

    For the second piece, we have that
    \begin{multline*}
      \int_{A_{R,4R}} \nabla \frac{1}{4\pi\sqrt{z^2 + (x-x')^2 + (y-y')^2}} \sigma(\br')
      \, \dd A(\br') \\
      = -\int_{A_{R,4R}} \frac{1}{4\pi\sqrt{z^2 + (x-x')^2 + (y-y')^2}} \nabla \sigma(\br')
      \, \dd A(\br') \\ + \int_{\partial A_{R,4R}}
      \frac{1}{4\pi\sqrt{z^2 + (x-x')^2 + (y-y')^2}}  \sigma(\br')  n(\br') \, \dd\ell(\br') \\
      = \mathcal{O}(1/R^2) \; .
    \end{multline*}
  \end{proof}
\end{lemma}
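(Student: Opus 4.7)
The plan is to estimate the integral $S[\sigma](\brthreet) = \int_{\bbR^2} \sigma(\br')/(4\pi|\brthreet-\brthreet'|)\, \dd A(\br')$ and its radial derivative by partitioning $\bbR^2$ into three regions keyed to $R := |\brthreet|/2$: an inner disc $D_R = \{|\br'| \leq R\}$, an annulus $A_{R,4R} = D_{4R}\setminus D_R$, and a far exterior $E_{4R} = \bbR^2\setminus D_{4R}$. The rationale is that both $D_R$ and $E_{4R}$ lie uniformly far from the target $\brthreet$, so $G$ and $|\nabla_{\threedt} G|$ are small there, while in the annulus the kernel can be singular but the hypothesis $\sigma(\br') = \mathcal{O}(|\br'|^{-3})$ forces $|\sigma| = \mathcal{O}(R^{-3})$ throughout. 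A preliminary observation is that the decay of $\sigma$ gives $\sigma \in L^1(\bbR^2)$, which uniformly absorbs the two far contributions.

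Using $|\br| \leq |\brthreet| = 2R$ and $|\brthreet-\brthreet'|^2 = |\br-\br'|^2 + z^2$ (treating the cases $|\br| \leq R$ and $|\br| > R$ separately on the inner side), I would verify $|\brthreet-\brthreet'| \geq R$ on both $D_R$ and $E_{4R}$, giving $G = \mathcal{O}(R^{-1})$ and $|\nabla_{\threedt} G| = \mathcal{O}(R^{-2})$ there; multiplying by $\|\sigma\|_{L^1}$ then yields the desired decay for the contributions from both regions. For the annulus contribution to $S[\sigma]$, I would pull the pointwise bound $|\sigma| \lesssim R^{-3}$ outside and estimate $\int_{A_{R,4R}} G\,\dd A$ directly, splitting off a unit disc around the planar projection $\br$ (on which the $2$D integral of $1/(4\pi|\br-\br'|)$ is $\mathcal{O}(1)$) and bounding the remainder crudely by $(4\pi)^{-1}|A_{R,4R}| = \mathcal{O}(R^2)$. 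This gives a contribution of $\mathcal{O}(R^{-3})\cdot\mathcal{O}(R^2) = \mathcal{O}(R^{-1})$.

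The hard part is the radial derivative on the annulus, since $|\nabla_{\threedt} G| \lesssim |\brthreet-\brthreet'|^{-2}$ is not integrable in two dimensions, so pulling $\sigma$ out no longer works. The key idea is to split
\[\frac{\brthreet}{|\brthreet|} \cdot \nabla_{\threedt} G = -\frac{z^2}{4\pi|\brthreet|\,|\brthreet-\brthreet'|^3} + \frac{\br}{|\brthreet|}\cdot \nabla_{\br} G.\]
For the first piece, the identity $|\brthreet-\brthreet'|^3 \geq z^2|\brthreet-\brthreet'|$ reduces the integrand pointwise to at most $(8\pi R)^{-1} G$, so the same integral estimate as above yields $\mathcal{O}(R^{-2})$. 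For the second piece, I would integrate by parts in $\br'$ using $\nabla_{\br} G = -\nabla_{\br'} G$, trading one derivative of $G$ for one of $\sigma$; the interior term is then dominated by $|\nabla \sigma| = \mathcal{O}(R^{-4})$ times $\int_{A_{R,4R}} G\,\dd A = \mathcal{O}(R^2)$, giving $\mathcal{O}(R^{-2})$, while a brief case split on the boundary circles (noting in particular that when $|z| < R/2$ the projection $\br$ is uniformly far from $|\br'| = R$) shows that the boundary terms are even smaller, $\mathcal{O}(R^{-3})$. Collecting the pieces yields the desired decay rates.
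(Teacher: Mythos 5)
Your proposal is correct and follows essentially the same route as the paper's proof: the same three-region decomposition keyed to $R=|\brthreet|/2$, the same $L^1$ absorption of the inner-disc and exterior contributions, the same pointwise $\mathcal{O}(R^{-3})$ bound combined with the unit-disc splitting of the annulus kernel integral, and the same treatment of the radial derivative via the $z^2$-piece bound $z^2/(4\pi|\brthreet||\brthreet-\brthreet'|^3)\le 1/(8\pi R|\brthreet-\brthreet'|)$ plus integration by parts in the in-plane piece. Your extra remark that the boundary terms are in fact $\mathcal{O}(R^{-3})$ is a correct refinement of the paper's $\mathcal{O}(R^{-2})$ accounting, but does not change the argument.
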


The final preliminary result establishes the injectivity of
convolution with $\GS$ in the dissipative regime.
\begin{lemma}
  \label{prop:gsinjective}
  Suppose that $\alpha,\beta,\gamma$ satisfy \cref{propy:dissconstants}.  
  The operator $\mu \to \GS \ast \mu$ is injective from
  $L^2(\Omega)$ to $L^2(\bbR^2)$ for any bounded domain
  $\Omega \subset \bbR^2$.
\end{lemma}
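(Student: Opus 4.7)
The plan is to pass to the Fourier side, where convolution with $\GS$ becomes multiplication by its symbol $\hat{\GS}$, and check that $\hat{\GS}$ vanishes only at the origin, so that injectivity follows.

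First I would verify that the Fourier framework is legitimate. Under \cref{propy:dissconstants}, the decay estimate $\GS(\br,\br') = \mathcal{O}(1/|\br|^3)$ from the proof of \cref{prop:gsrhodecay} gives integrability at infinity, and the leading-order behavior $|\br-\br'|^2 \log|\br-\br'|^2$ exhibited in the proof of \cref{compact} is locally integrable; together they yield $\GS \in L^1(\bbR^2)$. Since any $\mu \in L^2(\Omega)$ has compact support, $\mu \in L^1(\bbR^2) \cap L^2(\bbR^2)$, and Young's inequality ensures that $\GS \ast \mu$ is a well-defined $L^2$ function whose Fourier transform factors as $\hat{\GS}\,\hat{\mu}$ almost everywhere.

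Next I would compute $\hat{\GS}$ directly from the defining relation \cref{eq:GSid}. Using the two-dimensional Fourier symbols $|\xi|^4$ for $\Delta^2$ and $1/|\xi|$ for convolution with $1/(2\pi|\cdot|)$, the relation becomes
$$\left( \alpha_0 |\xi|^4 - \beta_0 + \frac{\gamma}{|\xi|} \right) \hat{\GS}(\xi) = 2,$$
so that $\hat{\GS}(\xi) = 2|\xi|/p(|\xi|)$ with $p$ the dispersion polynomial of \cref{eq:dispersion}. In the dissipative regime, \cref{lem:dispersionpoly} ensures that $p$ has no real roots; combined with $p(0) = \gamma < 0$, this gives $p(s) \neq 0$ for every $s \geq 0$, so $\hat{\GS}$ is a continuous function that vanishes only at $\xi = 0$.

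Finally, if $\GS \ast \mu \equiv 0$ in $L^2(\bbR^2)$, taking Fourier transforms gives $\hat{\GS}(\xi)\,\hat{\mu}(\xi) = 0$ almost everywhere, and dividing by the nonvanishing $\hat{\GS}$ off the origin yields $\hat{\mu} = 0$ almost everywhere, hence $\mu = 0$. I do not foresee a substantial obstacle here; everything rests on the explicit symbol calculation and on the fact that dissipation pushes the roots of the dispersion polynomial off the real axis, both of which are immediate from earlier results in the paper.
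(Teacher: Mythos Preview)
Your proposal is correct and follows essentially the same route as the paper: pass to the Fourier side, identify the multiplier $\hat{\GS}(\xi)=2|\xi|/p(|\xi|)$, observe that it is bounded, continuous, and vanishes only at the origin (a null set), and conclude that $\hat{\mu}=0$ a.e.\ whenever $\GS\ast\mu=0$. Your write-up supplies more of the supporting justification (e.g.\ $\GS\in L^1$ and the explicit dispersion-polynomial form of the symbol) than the paper's terse version, but the argument is the same.
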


\begin{proof}
  Any function $\mu \in L^2(\Omega)$ has a well-defined Fourier transform,
  $\hat{\mu}$,
  in $L^2(\bbR^2)$.
  The given operator has a Fourier multiplier which is
  bounded and continuous and its only zero is at the origin. Thus, the Fourier
  transform of $\GS\ast \mu$ is zero only if $\hat{\mu}$ is
  zero and so the convolution is zero only if $\mu$ is zero. 
\end{proof}

We now proceed with the proof of \cref{thm:inteqexistunique}. 
\begin{proof}[Proof of \cref{thm:inteqexistunique}]
We first consider the case in which \cref{propy:dissconstants} holds. Under these assumptions, the integral equation is Fredholm on $L^2(\Omega)$.
  Thus, the existence of solutions can be reduced
  to uniqueness of solutions. Suppose that $\mu \in L^2(\Omega)$ is a solution
  of the homogeneous equation. Then $\sigma = \GS\ast \mu$
  solves the homogeneous integro-differential equation
  and satisfies the decay conditions described in \cref{prop:gsrhodecay}.
  Applying \cref{cor:ssigsolves,prop:ssigma}, we obtain that $S[\sigma]$ is a solution of
  the original PDE that satisfies the conditions of \cref{prop:pdeunique}.
  Thus, $S[\sigma] \equiv 0$ in the lower half space. But in fact,
  $S[\sigma]$ is bounded throughout space by \cref{prop:ssigma} and by
  continuity satisfies the homogeneous Dirichlet boundary condition
  in the upper half space as well. Thus, $S[\sigma] \equiv 0$ in $\bbR^3$.
  By considering the difference of the limiting normal derivatives
  from the upper and lower half-spaces, we obtain that $\sigma \equiv 0$.
  But then $\GS\ast \mu \equiv 0$ and $\mu \equiv 0$
  by \cref{prop:gsinjective}. Applying the Fredholm alternative, we
  see that the integral equation has a unique solution for any
  $f \in L^2(\Omega)$. 

  Finally, we turn to the case of $\beta_0 \in \mathbb{R}.$ We begin by noting that the kernels $K_i$ in~\cref{eq:LS}, as functions of $\beta_0$, can be analytically continued from $\Im (\beta_0) >0$ to an open neighborhood of $\mathbb{R}\subset \mathbb{C}$;
  see \cref{prop:analyticbeta} for details. Moreover, the operators remain compact from $L^2(\Omega) \to L^2(\Omega).$ Thus, the operator on the left-hand side of~\cref{eq:LS} is Fredholm, analytic, and invertible at a point in an open neighborhood of $\mathbb{R}$ and hence, by the Generalized Steinberg's theorem, has a bounded inverse for all $\beta_0$ in a neighborhood of $\mathbb{R}$ except possibly for certain isolated points \cite{gohberg,ammari}.
\end{proof}

\section{Numerical implementation and fast algorithms}\label{sec:numerics}

In this section, we describe the numerical discretization
and solution of the integral equation \cref{eq:LS}, establish the 
convergence properties of the method, and report tests that were 
performed to confirm the accuracy and convergence of the implementation.

\subsection{Discretization, quadrature rule, and fast solution}

We use a Nystr{\"o}m method to discretize \cref{eq:LS}.
In particular, the grid points are taken to be equispaced points of the 
form $\mathbf{x}_\mathbf{i}^h = (hi_1,hi_2)$ where $|i_1| < N_x$ and $|i_2| < N_y$ 
for some $N_x$ and $N_y$ sufficiently large that $\Omega \subset [hN_x,hN_y]$, i.e.
that the grid covers the support of $\alpha_c$ and $\beta_c$. For such a grid,
it is well-known that the trapezoidal rule, i.e. 

$$ \int_\Omega g(\br) \, \dd A(\br) \approx \sum_{|i_1|<N_x, |i_2|<N_y} g(\mathbf{x}_{\mathbf{i}}^h) h^2 \; ,$$
has an error that decays super-algebraically for $g \in C_c^\infty(\Omega)$. 

Each kernel in \cref{eq:LS} can be written as $K_j(\br,\br')  = \beta_j(\br) w_j(\br-\br')$,
where $\beta_j \in C_c^\infty(\Omega)$ and $w_j$ is $\Gphi$, $\GS$, or a derivative of $\GS$. 
These kernels are weakly singular, so we must modify the trapezoidal rule to
obtain high-order accuracy. In particular, we seek a locally modified trapezoidal
rule that consists of a set of indices, $\mathcal{C}$, centered around
$(0,0)$ and a set of modified weights, $\alpha^j_{\mathbf{i}}(h)$, for each $K_j$ such that 
the quadrature rule

\begin{multline}
\label{eq:quadLS}
 \int_\Omega K_j(\bx^h_{\mathbf{i}},\br') \mu(\br') \, \dd A(\br') \\ \approx \beta_j(\bx_{\mathbf{i}}^h)
\left ( \sum_{\bf{i}-\bf{i}' \in \mathcal{C}} \alpha^j_{\mathbf{i}-\mathbf{i}'}(h) \mu(\bx_{\mathbf{i}'}^h) +
h^2 \sum_{\mathbf{i}-\mathbf{i}' \notin \mathcal{C}} w_j(\mathbf{x}_{\mathbf{i}}^h-\mathbf{x}_{\mathbf{i}'}^h) 
\mu(\mathbf{x}_{\mathbf{i}'}^h) \right )
\end{multline}
has an error that is $\mathcal{O}(h^p)$ for some reasonably large $p$. The 
weights should also satisfy a stability property, i.e. 

\begin{equation}
\label{eq:quadstab} 
\max_{j=1,\ldots,8} \sum_{\mathbf{i}\in\mathcal{C}} |\alpha^j_{\mathbf{i}}(h)| = \mathcal{O}(h) \; .
\end{equation}

Such quadrature rules can be obtained using the Zeta-corrected theory developed in 
\cite{ wu2021corrected, wu2021zeta, unified, acha}. These methods compute 
integrals of the form
$$
   I = \int_\Omega \frac{g(\br)}{|\br-\mathbf{x}_\mathbf{i}^h|^{s}} \, \dd A(\mathbf{r}) \, ,  
   \qquad I = \int_\Omega g(\br) \log|  \br -\mathbf{x}_\mathbf{i}^h |  \, \dd A(\br),
$$
for $s \in \mathbb{R}$ and $g \in C_c^\infty (\Omega) $. In particular, when $g$ is a monomial, 
it is possible to show that the error in approximating the integrals above is given exactly by 
the Epstein-Zeta function (or derivatives thereof). A moment-fitting method can then be used to
obtain modifications of the trapezoidal rule that only affect a small number of grid points
near $\mathbf{x}_{\mathbf{i}}^h$, i.e. $|\mathcal{C}|$ is relatively small,
and compute the integrals to a given order of accuracy. While previous work has 
focused primarily on Laplace, Helmholtz, and Stokes kernels \cite{wu2021zeta,unified}, a straightforward
extension of this procedure can be used to obtain high-order quadratures for the biharmonic 
($|\br|^2 \log|\br|$) kernel and its derivatives up to order three (see code for more details). 
For the calculations in this paper, we use order $p=6$ for these corrections. 

\begin{remark}
As noted above, the Green's function $\GS$ can be expressed as a constant, plus a smooth 
multiple of the biharmonic kernel and a smooth multiple of $|\br|^7$. 
Owing to the symmetries of the quadrature rules, the stability property of the modified weights, and a 
further modification at $(0,0)$ to handle the constant term, a sufficiently high-order Zeta-corrected 
rule for the biharmonic kernel or any of its derivatives up to order three will result in a 
6th-order accurate rule for $\GS$ or its corresponding derivative. Hence, we employ
a 6th-order rule derived for the biharmonic kernel to discretize our integral operators.
\end{remark}

As confirmation of the numerical implementation of the quadrature rules, the quadrature corrections were verified numerically by applying the integral operators 
to a test density $\mu$ for various grid spacings $h$. The test 
density was chosen to be a scaled Gaussian so that the density 
decays to machine epsilon within the domain of discretization. The integral was measured at a
fixed target at another point on the grid. In lieu of an analytic expression for the true value, we compare with reference values 
obtained using Matlab's adaptive integration routine. The results are shown in \cref{fig:intconv}.

\begin{remark}
  While the kernels found in the integral equation \eqref{eq:LS} are at most weakly singular, a naive implementation of the evaluation of these kernels will lead to catastrophic cancellation. This is due to the fact that the Hankel and Struve terms in the Green's function \eqref{eq:GS} possess $\log$ singularities which are canceled out by the moment relations \eqref{eq:momentrelation}. In the proof of \cref{compact}, it was shown that these $\log$ singularities cancel out and the leading order behavior of $\GS \sim r^2 \log r$. The effect of these cancellations is further exacerbated by the derivatives which are applied to the Green's function in the kernel, which would lead the $\log$ terms of the Hankel and Struve functions to become $O(\frac{1}{|\br|^3})$. To remedy this, stable evaluators were made for $\frac{i}{4}H^{(1)}_0(x) + \frac{1}{2\pi} \log(x)$ and $\mathbf{K}_0(x) - \frac{2}{\pi} \log(x)$ and all of their derivatives up to order 3. This is done by evaluating the asymptotic expansions near zero \cref{expansion1}-\cref{expansion3} whenever the argument is small with the $\log$ term canceled explicitly.   
\end{remark}

\begin{figure}[ht]
    \centering
    \includegraphics[width=0.75\linewidth]{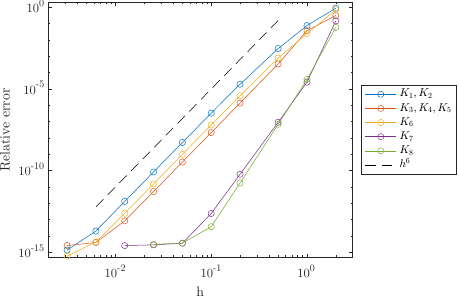}
    \caption{Convergence of discretized integral operators from equation \cref{eq:LS} computed using trapezoid rule with 6th-order Zeta corrections.}
    \label{fig:intconv}
\end{figure}

Because the kernels, $w_j$, appearing in the integral operators are translationally invariant, 
the quadrature rule, \cref{eq:quadLS}, for $K_j$ is in the form of a discrete convolution (Toeplitz matrix product)
followed by diagonal multiplication by the $\beta_j(\bx^h_{\mathbf{i}})$ values. It is well-known 
that the FFT can then be used to reduce the complexity of performing this product from $\mathcal{O}(N_x^2N_y^2)$ to 
$\mathcal{O}(N_xN_y (\log N_x + \log N_y))$. This speed-up in the application of the discretized integral operators 
makes the system well-suited to solution by an iterative solver such as GMRES. This technique has previously 
been used in a number of other contexts, particularly in the solution of the Lippman-Schwinger equation for 
the Helmholtz scattering problem; see, {\em inter alia}, \cite{duan2009high,gopal2022accelerated} and the
references therein.

\subsection{Convergence of the numerical method}
It can be shown that the solution of the discrete linear system
corresponding to the integral equation will converge to the solution of the
continuous integral equation with the same convergence rate as the quadrature
rule. Specifically, we have the following proposition.

\begin{proposition}\label{prop:convrate}
    Let $f \in C_{c}^\infty(\Omega)$ and let $\mu$ be the solution to \cref{eq:LS}. 
    Furthermore, suppose $\mu^h$ is the solution to the discretized equation using a $p$-th 
    order modified trapezoidal rule on a grid with spacing $h$, as in
    \cref{eq:quadLS}, to discretize the integral operators and suppose that 
    the quadrature rule satisfies the stability condition \cref{eq:quadstab}. 
    Let $\{\bx^h_{\mathbf{i}}\}$ denote the corresponding discretization nodes. 
    If the operator on the left-hand side of \cref{eq:LS} has a bounded inverse, 
    then there exists an $h_0>0$ such that  the discrete system is invertible for all $0<h<h_0.$ 
    Moreover, there exists a constant $C$ such that 
    $$\max_{\mathbf{i}} | \mu(\bx^h_{\mathbf{i}})-\mu^h_{\mathbf{i}}| \le C h^p$$
    for all $0<h<h_0.$
\end{proposition}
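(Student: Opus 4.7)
The plan is to apply the Anselone-Atkinson theory of collectively compact operator approximations to the Nystr{\"o}m discretization of \cref{eq:LS}. Write the continuous equation as $(M + T)\mu = f$, where $M$ denotes multiplication by $\alpha/\alpha_0$ and $T = \tfrac12 \sum_{j=1}^8 K_j$. By hypothesis $(M+T)^{-1}$ is bounded on $L^2(\Omega)$, and since $T$ is also compact on $C(\bar\Omega)$ (by \cref{compact} together with Sobolev embedding in two dimensions) the Fredholm alternative plus a regularity bootstrap transfer invertibility to $C(\bar\Omega)$. By \cref{cor:smoothness} the solution $\mu$ lies in $C_c^\infty(\Omega)$. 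I would then extend the discrete operator to $C(\bar\Omega)$ via Nystr{\"o}m interpolation: define $(T^h v)(\br)$ pointwise by the right-hand side of \cref{eq:quadLS} with the grid values $v(\bx^h_{\mathbf{i}'})$. This yields bounded operators on $C(\bar\Omega)$ whose restriction to grid points coincides with the discrete linear system.

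The next step is to verify the three ingredients of Anselone's theorem. \emph{Consistency:} since $\mu \in C_c^\infty(\Omega)$ and each kernel splits as $K_j(\br,\br') = \beta_j(\br) w_j(\br-\br')$ with $\beta_j \in C_c^\infty(\Omega)$ and $w_j$ equal to $\GS$, $\Gphi$, or a derivative thereof, the $p$-th order accuracy of the Zeta-corrected rule immediately yields
\[ \max_\mathbf{i} \bigl|(T\mu)(\bx^h_\mathbf{i}) - (T^h \mu)(\bx^h_\mathbf{i})\bigr| \le C h^p. \]
\emph{Uniform boundedness and collective compactness:} the stability condition \cref{eq:quadstab} bounds the local-correction contribution to $(T^h v)(\br)$ by $\mathcal{O}(h)\|v\|_\infty$, while the far-field trapezoidal sum is a Riemann approximation of a weakly singular integral whose $L^1$-in-$\br'$ bound is uniform in $\br$, giving a uniform-in-$h$ bound by a constant times $\|v\|_\infty$. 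Applied to difference quotients in $\br$, using smoothness of the $\beta_j$ and integrability of the relevant derivatives of $w_j$, the same analysis yields equicontinuity of $\{T^h v : \|v\|_\infty \le 1,\ 0 < h \le h_0\}$, hence collective compactness via Arzel\`a-Ascoli. \emph{Pointwise convergence:} $T^h v \to T v$ uniformly for fixed $v \in C(\bar\Omega)$ follows from uniform boundedness, density of smooth functions, and consistency.

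Anselone's theorem then provides $h_0 > 0$ such that $(M + T^h)^{-1}$ exists and is uniformly bounded on $C(\bar\Omega)$ for all $0 < h < h_0$; restricted to grid points, this gives invertibility of the discrete system. Subtracting the continuous equation evaluated on the grid from the discrete equation and applying the consistency estimate,
\[ \max_\mathbf{i} |\mu(\bx^h_\mathbf{i}) - \mu^h_\mathbf{i}| \le \|(M + T^h)^{-1}\| \cdot \max_\mathbf{i} \bigl|(T\mu)(\bx^h_\mathbf{i}) - (T^h \mu)(\bx^h_\mathbf{i})\bigr| \le C h^p, \]
as claimed.

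The main obstacle is establishing collective compactness in the second step. The weakly singular nature of $\GS$ and its derivatives, combined with the local correction weights, demands a careful split into near- and far-field contributions, and \cref{eq:quadstab} is essential for bounding the near-field part uniformly in $h$. Establishing equicontinuity further relies on the decomposition of $\GS$ as a smooth part plus a leading biharmonic-type singularity, whose derivative asymptotics were established in the proof of \cref{compact}; a version of the same decomposition applied to the derivatives of $w_j$ controls how $(T^h v)(\br)$ varies in $\br$ uniformly in $h$.
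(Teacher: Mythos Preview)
Your approach is essentially the paper's: both apply the Anselone/Kress collectively-compact framework on $C(\Omega_0)$ to obtain uniform invertibility of the semi-discrete operators, then subtract the continuous and discrete equations and invoke the $p$-th order quadrature accuracy together with $\mu\in C_c^\infty(\Omega)$ (from \cref{cor:smoothness}) to get the $\mathcal{O}(h^p)$ bound.

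Two technical points in your sketch deserve tightening, and the paper handles them differently. First, your Nystr\"om-interpolation extension of $T^h$ to arbitrary $\br$ is ambiguous because the correction weights $\alpha^j_{\mathbf{i}-\mathbf{i}'}(h)$ and the stencil $\mathcal{C}$ are tied to a grid target index $\mathbf{i}$; the paper instead defines $\underline{K}^h[\mu]$ only at grid points and extends by piecewise-linear interpolation over a triangulation, which sidesteps this. Second, your equicontinuity argument appeals to ``integrability of the relevant derivatives of $w_j$'', but for the third-derivative kernels ($K_1,K_2$) one has $|\nabla w_j|\lesssim |\br|^{-2}$, which is \emph{not} locally integrable in two dimensions; the paper therefore proves a H\"older-$\eta$ estimate (any $\eta<1$) by splitting the sum into an inner region of radius $\sim h^{\eta-1}|\bx-\by|^\eta$ and an outer region, which is the ``careful split'' you correctly anticipate in your final paragraph but do not carry out.
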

\begin{proof}
    Invertibility of the discrete system follows directly from \cref{thm:semidiscinvertible}, and \cref{cor:fulldisc}, while the rate of convergence follows from \cref{thm:convergence}, proved in the appendix. 
\end{proof}

To check the accuracy of the solutions to the integral equation we apply two different tests. 
In both tests, we send a plane wave with wavenumber $k = 1.05$ toward a Gaussian thickness profile 
with width $\sigma = 4 $ m and height $A = 2 $ m. In the first test, we solve  \cref{eq:LS} for the density $\mu$ and then use the FFT to interpolate the density onto a fine grid ($h_{\textrm{fine}}=0.0625$). The interpolated density is then used to compute the solution $\phi$ on the same grid. After computing the solutions for different $h$ on the fine grid, we can use high-order finite differences to check the consistency of the solution with the original PDE boundary condition \eqref{eq:phipde2}. In particular, the residual of the boundary condition was computed at the surface points $\{(m/2,n/2) | -8 \leq m,n \leq 8 \}$ and divided by the largest term in the boundary condition in order to get the relative errors. The $\ell^2$ and $\ell^\infty$ norms of these relative errors are displayed in \cref{fig:conv2}. For the second test, we compute the backward error in the integral equation by comparing the density interpolated on an even finer grid ($h_{\textrm{finest}}=0.015625$) with the converged density using both  $L^2$ and $L^\infty$ norms. Because this test does not rely on finite differences, it continues to converge even after the finite difference test has stalled. \\

\begin{figure}[ht]
    \centering
    \includegraphics[width=0.85\linewidth]{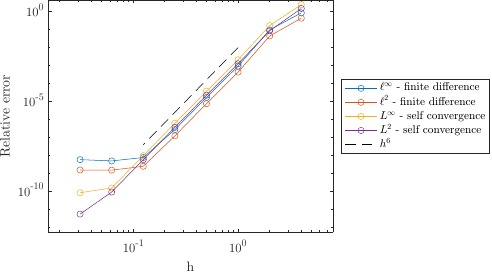}
    \caption{Convergence of the solution to the integral equation \cref{eq:LS} for a Gaussian thickness profile and incident plane wave. The consistency of the solution $\phi$ with the surface PDE \cref{eq:phipde2} is shown in red and blue, while the self-convergence of the density $\mu$ is shown in yellow and purple.  }
    \label{fig:conv2}
\end{figure}

\section{Examples and applications}\label{sec:examples}

In this section, we present some numerical examples of flexural-gravity wave scattering problems that are inspired by glaciology. For each of the examples, the consistency with the PDE was checked using finite differences, and the relative error in each case was measured to be around 1e-6. 

\begin{figure}[ht]
    \centering
    \includegraphics[width=0.95\linewidth]{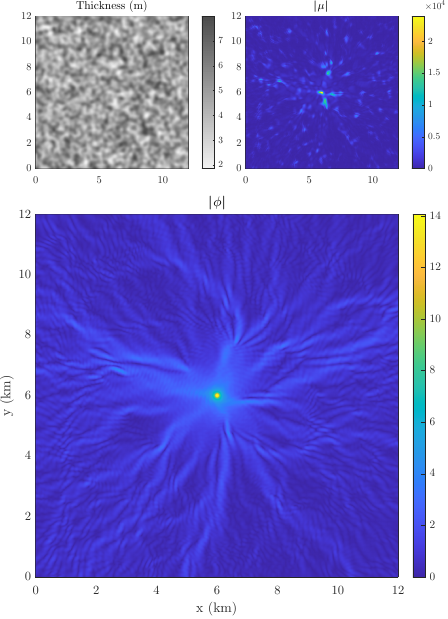}
    \caption{A point source inside of a random flexural medium. Top left: thickness profile generated by a sum of Gaussian distributions with random height values. The average thickness is 5 m. Top right: the absolute value of the density $\mu$. This density can be viewed as the strength of sources that must be added to the original point source to produce the total field.  Bottom: the absolute value of the velocity potential for a point source radiating outwards from the center of the domain. }
    \label{fig:pointsrc}
\end{figure}

In the first example, we look at what happens to a point source inside of a random medium, which is inspired by the highly heterogeneous nature of sea ice and ice shelves (\cref{fig:pointsrc}). To generate the random medium, a collection of Gaussians were placed at a regular interval and added to a mean thickness profile of 5 m. The amplitude of each Gaussian was sampled from a uniform distribution on the interval $[-1,1]$, the standard deviation of each Gaussian was chosen to be 75 m, and each Gaussian was placed one standard deviation apart on a 12 km by 12 km domain. Then, the right-hand side was chosen to be a point source with frequency $1$ Hz ($k = 0.0255 \ \rm{ m}^{-1}$) using the Green's function centered in the middle of the domain. As the wave energy propagates through the random medium, the wave begins to branch out in different directions; this wave branching phenomenon has previously been reported in \cite{jose2022branched,jose2023branched,flexbie}. The resulting interference pattern is produced by diffuse reflection through a highly heterogeneous medium.

\clearpage 

Next, we examine the effect of sinusoidal variations in thickness on the propagation of flexural waves \cref{fig:rolls1}. Such undulations can be found in the surface rolls of Arctic ice shelves \cite{nekrasov2023rolls} and the rumples of Antarctic ice shelves \cite{collins1985creep}. These rolls are believed to be formed by compressive wrinkling of a thin ice layer, though the exact mechanism is largely unknown. For this study, the thickness $H(x,y)$ is based on the amplitude of the surface roll pattern $A$ and the observed freeboard $H_f$:
$$ H(x,y) = \frac{\rho_{\textrm{sea}}}{\rho_{\textrm{sea}} - \rho_{\textrm{ice}}} \left[ \left(\frac{A}{2} + \frac{A}{2}\sin(2 \pi x/ w) \right) \psi(x,y) + H_f \right] \, , $$
where $w$ is the width of each roll and $\psi(x,y)$ is a smooth tapering function given by
$$\psi(x,y) = (\erf( s(x - x_0) ) + \erf(s(x_1 - x)) ) (\erf( s(y - y_0) ) + \erf(s(y_1- y)) ) $$
This function is almost unity on the region $[x_0,x_1] \times [y_0,y_1]$ and smoothly decays outside this region, ensuring that the pattern is compactly supported. The parameter $s$ controls the rate at which the pattern transitions between one and zero. For this study, we consider realistic values $A = 0.75 \ {\rm m}, w = 333.3 \ {\rm  m}, H_f = 0.35 \ {\rm  m}, s = 0.008$. We send in horizontal plane waves that sweep a range of wavenumbers between $0.001-0.06 \ \rm{ m}^{-1}$ and measure the amplitude of the solution at two different points before and after the roll pattern as a proxy for the reflected and transmitted field (\cref{fig:rolls1}).

This frequency sweep demonstrates the degree to which the qualitative wave behavior is sensitive to the incident wavenumber. For both low and high wavenumbers, the incident wave is mostly transmitted with very little reflection. At times, the amplitude of the transmitted field exceeds one due to lateral scattering effects at the top and bottom boundaries of the roll geometry. At intermediate wavenumbers, the reflection and transmission vary dramatically, with the reflected component shooting up at intermittent values of $k$. This type of variation in the reflection is sometimes referred to as a resonance comb \cite{squire2001region}. There are a few wavenumbers where the field is almost entirely reflected, owing to a phenomenon known as Bragg scattering. One of these fields is plotted in \cref{fig:rolls2}, contrasted against a similar wavenumber for which the field is mostly transmitted. 

\clearpage

\begin{figure}[ht]
    \centering
    \includegraphics[width=0.9\linewidth]{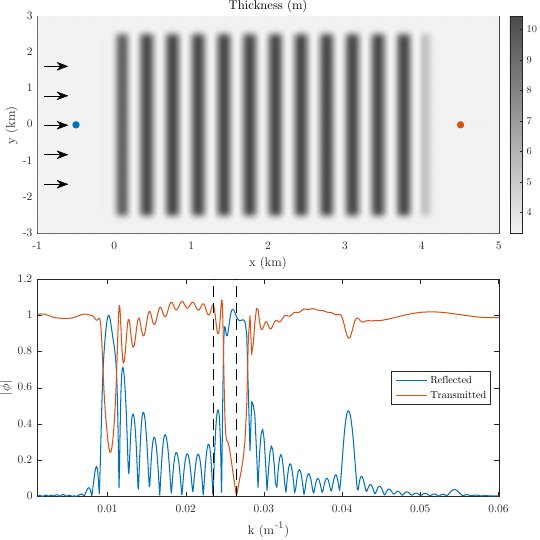}
    \caption{Top: surface roll pattern with a wavelength of 333 m.  Arrows indicate direction of the incident plane wave. The blue dot indicates where the scattered field was measured as a proxy for the amplitude of the reflected wave, while the red dot marks the location where the total field was measured for the amplitude of the transmitted wave. Bottom: amplitudes of the reflected and transmitted fields for a wide range of wavenumbers. Dashed lines represent wavenumbers values that are plotted in \cref{fig:rolls2}.}
    \label{fig:rolls1}
\end{figure}

\begin{figure}[ht]
    \centering
    \includegraphics[width=\linewidth]{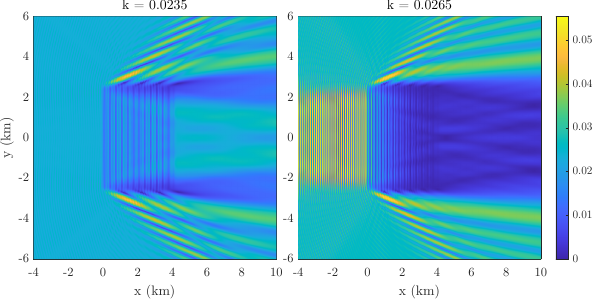}
    \caption{Wave scattering by an array of sinusoidal rolls for two similar wavenumbers. The quantity that is plotted is the absolute value of the velocity $|\phi_z|$. Left: a solution with wavenumber $k = 0.0235$ where most of the wave energy is transmitted. Right: a solution with wavenumber $k = 0.0265$ where most of the wave energy is reflected. }
    \label{fig:rolls2}
\end{figure}

We now turn to an example of scattering by a system of pressure ridges (\cref{fig:ridges5}). In contrast with the previous example, these ridges are not parallel but are allowed to branch outward in different directions. Such ridge systems are commonly found in the Arctic, and there are many mysteries associated with the formation and orientation of these ridges. In this example, we create these ridges by convolving the Gaussian kernel with a constant density defined on some piecewise curves. Getting the derivatives of this thickness profile simply amounts to taking derivatives of the Gaussian kernel. We allow these ridges to have a thickness of 3 m while the background thickness is 1 m. Then, we send an incident plane wave with a wavenumber of $k = 0.13 \ \rm{m}^{-1}$. The wave is scattered upon contact with each successive branch of the ridge pattern, suggesting that wave-ice interaction is significantly complicated by the presence of ridges.

\begin{figure}[ht]
    \centering
    \includegraphics[width=\linewidth]{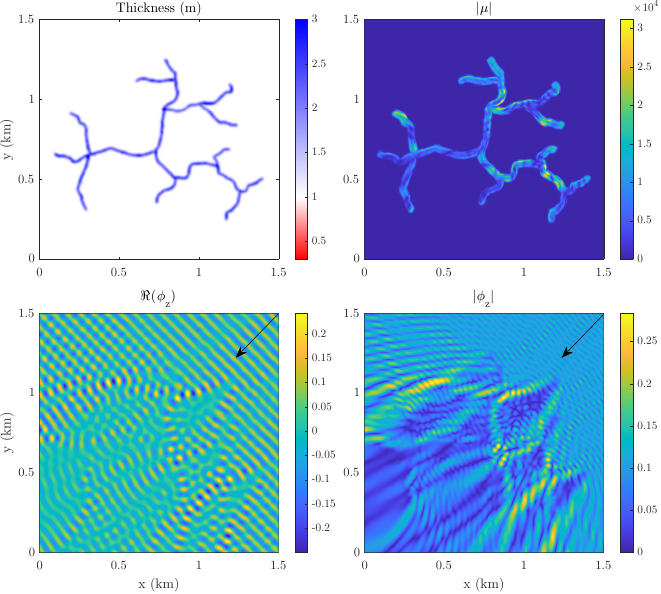}
    \caption{Wave scattering by a system of ridges. Top left: thickness profile. Top right: magnitude of the solution to the integral equation. Bottom left: Real part of the vertical velocity. Bottom right: magnitude of the vertical velocity. Black arrows indicate direction of incident field. }
    \label{fig:ridges5}
\end{figure}

\begin{figure}[ht]
    \centering
    \includegraphics[width=\linewidth]{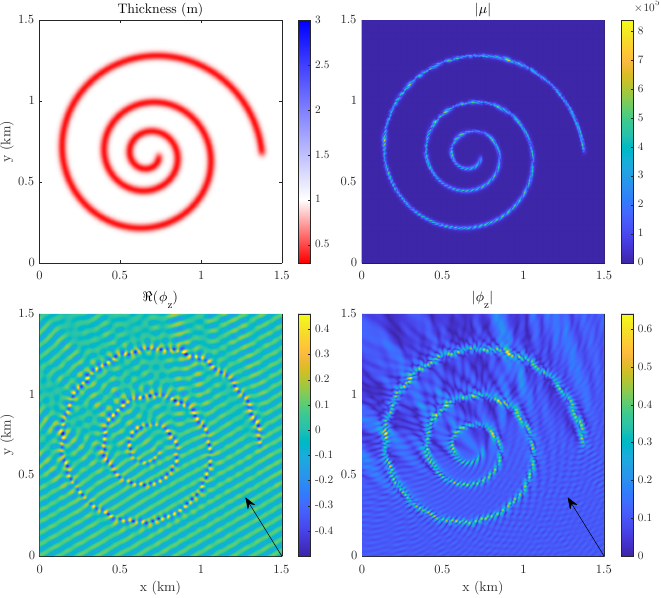}
    \caption{A spiral waveguide, with the same wavenumber as in the previous example (\cref{fig:ridges5}). Top left: thickness profile. Top right: magnitude of the solution to the integral equation. Bottom left: Real part of the vertical velocity. Bottom right: magnitude of the vertical velocity. Black arrows indicate direction of incident field. }
    \label{fig:spiral}
\end{figure}

In the next example we look at wave interaction with a spiral-shaped channel or groove in the ice (\cref{fig:spiral}). This groove is parametrized using the following formula:
\begin{align*}
    \br_s(t) = ( -a t^3 \cos(t) , a t^3 \sin(t))
\end{align*}
where $a = 0.0001$. Then, the thickness $H$ is given by the following convolution of the speed of the curve with the Gaussian kernel:
$$ H(\br) = H_0 + A\int^{11\pi}_{5\pi} \exp\left( \frac{ -| \br - \br_s(t) |^2}{2\sigma^2}\right) | \br_s'(t)| \, \dd t $$
where $\ H_0 = 1, \ \sigma = 0.15,$ and $A$ was chosen so that the minimum thickness of the ice was 0.35. This integral is performed using Gauss-Legendre quadrature, and derivatives of $H$ are readily obtained by taking derivatives of the Gaussian kernel above. The incident plane wave was chosen with wavenumber to be the same as the previous example, $k = 0.11$. When the incident wave reaches the groove, it excites a trapped mode that resonates throughout the spiral. These types of
``waveguide'' phenomena for surface waves are a rich area for future numerical investigations.

\clearpage

\section{Conclusion}\label{sec:conclusion}

In this work, we develop a novel integral equation representation
for the variable thickness problem in the modeling of flexural-gravity waves. The resulting integral equation is second-kind and its solution is compactly supported on the region of varying thickness. We prove well-posedness of the integral equation, i.e., 
that solutions exist for data in appropriate function spaces and that the solutions are unique and depend continuously on the data. 
To solve the integral equation numerically, we apply high-order accurate quadrature and solve
the resulting linear system using an iterative algorithm with an FFT-accelerated matrix-vector product. 
The effectiveness and scalability of the method is demonstrated by solving several geometrically complex
flexural-gravity scattering problems, inspired by applications in glaciology, 
that feature detailed branching patterns, Bragg scattering, multiple scattering, 
and trapped modes.

There are several extensions of this work that are of natural interest to applications in
glaciology. These include polynyas (holes in the ice), ice floes of finite extent,
and shore-fast ice. In these cases, the equations
will need to be augmented with appropriate boundary conditions in some instances, and new 
Green's functions developed in others.
Additionally, we have required that the coefficients $\alpha,\beta,$ and $\gamma$ are smooth but this assumption can be weakened. In particular, one can augment the system with additional constraints which enforce continuity of the solution and its derivatives across lines where $\alpha$ is not differentiable. For a detailed discussion on how this is done in two dimensions, see \cite{williams2004oblique}. We also note that the present method should extend easily to the case in which the depth of the fluid is finite but constant, via a standard \emph{method of images} type approach. Though the form of the Green's function will change, much of the analysis, discretization, and solution should remain the same, \emph{mutatis mutandis}. Such methods may be important for seismic imaging and passive sea-ice monitoring \cite{shuchman1994extraction}.

In this paper, a simple FFT-accelerated
iterative solver was used. However, for more singular problems, e.g. problems in which 
the contrast ratio of $\alpha$ is high, many iterations may be required. 
For such nearly-singular problems,
modern {\em fast direct} solvers can be easily adapted and extended to solve the discretized system. Such methods have the added benefit that, after an initial factorization step, it is relatively cheap to solve the system for many right-hand sides, which is particularly useful in optimization and inverse problems.

\section{Code availability}

 The examples from this paper may be found at \url{https://github.com/peter-nekrasov/flex-plus-water}. 

\section{Acknowledgments}
The authors would like to thank Douglas MacAyeal, Mary Silber, Anand Oza, Bowei Wu, Tristan Goodwill, and Michael Siegel for many useful discussions. P. Nekrasov would like to acknowledge the support of the National Science Foundation (NSF) under Grant No. 2332479. J.G. Hoskins and M. Rachh would like to thank the American Institute of Mathematics and, in particular, John Fry for hosting them on Bock Cay during the SQuaREs program, where parts of this work were completed. The Flatiron Institute is a division of the Simons Foundation. 

\frenchspacing
\bibliographystyle{plain}
\bibliography{refs}

\appendix

\section{Derivation of the Green's function}
\label{app:greensfun}

By the translation invariance of \eqref{eq:GSid}, we can compute
$\GS(\br,\br') = \GS(\br-\br',0)$. Let $\hat{\sigma}(\xi)$ be the
Fourier transform of $\GS(\br,0)$. Taking a Fourier transform of
\eqref{eq:GSid}, we obtain
$$ \left[(\alphn |\xi|^4-\betan) +\frac{\gamma}{|\xi|}\right] \hat{\sigma}(\xi) = 2. $$
Thus,
$$ \hat{\sigma}(\xi) = \frac{2|\xi|}{\alphn |\xi|^5-\betan |\xi|+\gamma}.$$

\subsection{No repeated roots}

Let $\rho_1,\ldots,\rho_5$ be the roots of $p(z)=\alphn z^5-\betan z +\gamma$
and let $e_1,\ldots,e_5$ be defined as in \eqref{eq:moments}. For now, we will
assume that the roots are distinct as assumed in
Lemma~\ref{lem:dispersionpoly} and Theorem~\ref{thm:greensfun}. We discuss
appropriate modifications for treating repeated roots in Appendix~\ref{sec:repeatedroots}.

We perform a partial fraction expansion, obtaining
$$\hat{\sigma}(\xi)= 2 |\xi| \sum_{j=1}^5 \frac{e_j}{|\xi|-\rho_j} \; .$$
Recalling the moment relations \eqref{eq:momentrelation},
$\hat{\sigma}$ can be re-expressed as 

$$ \hat{\sigma}(\xi) =  2 \sum_{j=1}^5 \frac{e_j \rho_j}{|\xi|-\rho_j}.$$

Taking an inverse Fourier transform, we obtain
$$\GS(\br,0) = \frac{1}{\pi} \sum_{j=1}^5 e_j \rho_j
\int_0^\infty \rho J_0(\rho r) \frac{1}{\rho - \rho_j}\,{\rm d}\rho \; ,$$
where $r = |\br|$. Rearranging, we see that
$$\GS(\br,0)= \frac{1}{\pi}
\sum_{j=1}^5 e_j \rho_j \int_0^\infty J_0(\rho r) \,{\rm d}\rho
+ \frac{1}{\pi} \sum_{j=1}^5 e_j \rho_j^2 \int_0^\infty J_0(\rho r)
\frac{1}{\rho - \rho_j}\,{\rm d}\rho.$$
The moment relations~\eqref{eq:momentrelation} imply that
the first term is zero.

Let $\chi \in \bbC$ be given with $\arg(\chi) \ne 0$.
From \cite[\S 6.562]{gradshteyn2014table} we have the formula
$$ \int_0^\infty J_0(\rho r) \frac{1}{\rho - \chi}\,{\rm d}\rho
= \frac{\pi}{2} {\mathbf K}_0(-\chi r)  \; $$
where $\mathbf{K}_0$ is the standard Struve function. This formula covers
most of the terms. 

By Lemma~\ref{lem:dispersionpoly}, $\rho_1$ is the only root with zero
argument. In this case, we determine the appropriate value for the
limit by the limiting absorption principle. If $\rho_1(\beta)$
is the root of $\alpha z^5 - \beta z + \gamma$ continued
from the root $\rho_1$ corresponding to $\beta = \beta_0$, then
it is straightforward to show that 
$$ \left. \frac{\partial \rho_1(\beta)}{\partial \beta} \right
|_{\beta=\beta_0} = \frac{\rho_1}{5\alpha \rho_1 - \beta_0}
=  \frac{\rho_1}{p'(\rho_1)}> 0 \; .$$
Thus, for $\beta = \beta_0+i\epsilon$, we have $\rho_1(\beta) \to
\rho_1$ approaches from the upper half plane as $\epsilon \to 0^+$.

The function $\mathbf{K}_0$ has a branch cut on the negative
real axis. In particular, $\mathbf{K}_0(z) = \mathbf{H}_0(z) - Y_0(z)$,
where $\mathbf{H}_0$ is an analytic Struve function and $Y_0$
captures the branch cut. By convention, the value of $Y_0$ on the
negative real axis is the limit of the principle branch from the
upper half plane.
We have also that $Y_0(ze^{m\pi i}) = Y_0(z) + 2im J_0(z)$.
After some re-arranging and applying standard identities, we obtain

$$ \lim_{\epsilon \to 0^+} \mathbf{K}_0(-\rho_1(\beta_0+i\epsilon)r)
= -\mathbf{K}_0(\rho_1 r) + 2i H_0^{(1)}(\rho_1 r) \; .$$
This recovers the formula \eqref{eq:GS}.

Let $\hat{\sigma_\phi}$ be the Fourier transform of $\Gphi(\br,0)$.
By the definition of $\Gphi$, \eqref{eq:Gphiid}, we have that

$$ \hat{\sigma_\phi}(\xi) = \frac{\hat{\sigma}(\xi)}{2|\xi|} =
\frac{1}{\alphn |\xi|^5-\betan |\xi|+\gamma} =
\sum_{j=1}^5 \frac{e_j}{|\xi|-\rho_j} \; .$$
Taking an inverse Fourier transform, we obtain
\begin{multline*}
  \Gphi(\br,0) = \frac{1}{2\pi} \sum_{j=1}^5 e_j 
\int_0^\infty \rho J_0(\rho r) \frac{1}{\rho - \rho_j}\,{\rm d}\rho \\
= \frac{1}{2\pi}
\sum_{j=1}^5 e_j \int_0^\infty J_0(\rho r) \,{\rm d}\rho
+ \frac{1}{2\pi} \sum_{j=1}^5 e_j \rho_j \int_0^\infty J_0(\rho r)
\frac{1}{\rho - \rho_j}\,{\rm d}\rho.
\end{multline*}
Again, the moment relations and the Struve function integral identity
applied above result in the formula \eqref{eq:Gphi}.

\begin{remark}
The integral that defines $G_\phi$ in terms of $\GS$, i.e. \cref{eq:Gphiid},
is absolutely convergent for $\epsilon\ne 0$ but only conditionally convergent 
for $\epsilon=0$; c.f. \cref{prop:gsrhodecay}. Observe that the 
limiting absorption approach is consistent with the principal value definition
of the integral, as in \cref{eq:Sdef}, i.e. the formula \cref{eq:Gphiid} relating
$G_\phi$ and $\GS$ indeed holds. 
\end{remark}

\begin{remark}
The above treats the physically meaningful case, $\gamma < 0$. 
  For the non-physical case, $\gamma > 0$, the polynomial 
  $p(z) = \alphn z^5 - \betan z + \gamma$ can have two positive roots
  for real $\betan$.
  Because the derivative is positive at the larger of these roots
  and negative at the other, the dependence of the roots on $\betan$
  takes opposite sign. One then finds that, when applying a limiting absorption
  principle, the roots approach the real axis from opposite sides. This
  has the amusing effect that there are two Hankel function terms in the limiting
  Green's function, one that is outgoing and one that is incoming. 
\end{remark}

\subsection{Repeated roots}

\label{sec:repeatedroots}
In the case that $p(z) = \alphn z^5 -\betan z + \gamma$ has repeated
roots, the procedure can be modified appropriately. Suppose that
the roots are $\rho_1,\ldots,\rho_5$ where $\rho_4=\rho_5$ and
$\rho_1,\ldots,\rho_3$ are distinct. Then, we have a partial fraction
decomposition

$$ \frac{1}{\alphn z^5 - \betan z + \gamma} = \sum_{j=1}^4 \frac{e_j}{z-\rho_j}
+ \frac{\tilde{e}_4}{(z-\rho_4)^2} \; .$$
The coefficients $e_1,\ldots,e_4$ are defined by residues as
before, i.e. $e_j = \Res_{z=\rho_j} 1/p(z)$ for $j=1,\ldots,4$.
The remaining coefficient can be found by the formula

$$\tilde{e}_4 = \Res_{z=\rho_4} \frac{z-\rho_4}{p(z)} \; .$$

Similar moment relations can also be found by residue calculus.
We have

$$
\sum_{j=1}^4 e_j \rho_j^q + q \tilde{e}_4 \rho_4^{q-1} = 0
\textrm{ for } q\in\{0,1,2,3 \} \, , \
\sum_{j=1}^4 e_j \rho_j^4 + 4 \tilde{e}_4 \rho_4^{3} = \frac{1}{\alphn} \, .$$
We also have

$$ \frac{2z}{\alphn z^5 - \betan z + \gamma} = 2\sum_{j=1}^4
\frac{e_j\rho_j}{z-\rho_j} + 2 \frac{\tilde{e}_4}{z-\rho_4} 
+ 2 \frac{\tilde{e}_4 \rho_4}{(z-\rho_4)^2} $$
  
A similar construction to the case with distinct roots gives 

\begin{multline*}
\GS(\br,0) = \frac{1}{\pi} \sum_{j=1}^4 e_j \rho_j
\int_0^\infty \rho J_0(\rho r) \frac{1}{\rho - \rho_j}\,{\rm d}\rho
+ \frac{1}{\pi} \tilde{e}_4 \int_0^\infty \rho J_0(\rho r) \frac{1}{\rho - \rho_4}\,{\rm d}\rho \\
  + \frac{1}{\pi}  \tilde{e}_4 \rho_4
  \int_0^\infty \rho J_0(\rho r) \frac{1}{(\rho - \rho_4)^2}\,{\rm d}\rho\; ,
\end{multline*}
  Rearranging, we see that
  \begin{multline*}
    \GS(\br,0)= \frac{1}{\pi}
    \left (\sum_{j=1}^4 e_j  \rho_j + \tilde{e}_4 \right )
    \int_0^\infty J_0(\rho r) \,{\rm d}\rho
    + \frac{1}{\pi} \sum_{j=1}^3 e_j \rho_j^2
    \int_0^\infty J_0(\rho r)
    \frac{1}{\rho - \rho_j}\,{\rm d}\rho \\
    + \frac{1}{\pi}
    \left (e_4 \rho_4^2 + 2 \tilde{e}_4 \rho_4 \right )  \int_0^\infty
    J_0(\rho r)\frac{1}{\rho - \rho_4} \,{\rm d}\rho
    + \frac{1}{\pi} \tilde{e}_4 \rho_4^2 \int_0^\infty J_0(\rho r)
    \frac{1}{(\rho - \rho_4)^2}\,{\rm d}\rho \; .
  \end{multline*}
  Applying the same identities as before, we obtain

\begin{multline*}
  \GS(\br,0) = \frac{1}{2} e_1 \rho_1^{2} \left  [
    - \mathbf{K}_0(\rho_1 r) + 2 i H_0^{(1)} (\rho_1 r)
    \right ]   + \frac{1}{2} \sum_{j=2}^3 e_j \rho_j^2
  \bK_0(-\rho_j r) \\
  + \frac{1}{2} \left ( e_4 \rho_4^2 + 2\tilde{e}_4 \rho_4 \right)
  \bK_0(-\rho_4 r)
  + \frac{1}{2} \tilde{e}_4 \rho_4^2 \left. \frac{\partial}{\partial \rho}
  \bK_0(-\rho r) \right |_{\rho = \rho_4} \; ,
\end{multline*}
or more explicitly

\begin{multline*}
  \GS(\br,0) = \frac{1}{2} e_1 \rho_1^{2} \left  [
    - \mathbf{K}_0(\rho_1 r) + 2 i H_0^{(1)} (\rho_1 r)
    \right ]   + \frac{1}{2} \sum_{j=2}^3 e_j \rho_j^2
  \bK_0(-\rho_j r) \\
  + \frac{1}{2} \left ( e_4 \rho_4^2 + 2\tilde{e}_4 \rho_4 \right)
  \bK_0(-\rho_4 r)
  + \frac{1}{2} \tilde{e}_4 \rho_4^2 \left ( -\frac{2}{\pi} + \bK_1(-\rho_4 r)
  \right ) r  \; .
\end{multline*}

We conclude with the following proposition, which characterizes the analyticity of $\GS$ in the parameter $\beta$,
independent of the presence of multiple roots.
\begin{proposition}
\label{prop:analyticbeta}
    For fixed $\alphn>0$ and $\gamma<0$ the Green's function $\GS$ defined in \cref{eq:GSid} can be analytically continued 
    as a function of $\betan$ from 
    $\Im(\betan)>0$ to an open neighborhood of $\Im(\betan)\geq 0$.
\end{proposition}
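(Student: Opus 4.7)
The plan is to use the formula \cref{eq:GS} from \cref{thm:greensfun} as a candidate for the analytic continuation and verify that it is analytic across the real axis in $\betan$. First I would establish that the roots $\rho_1(\betan),\ldots,\rho_5(\betan)$ of $p(z)=\alphn z^5-\betan z+\gamma$ and the residues $e_j(\betan)=1/p'(\rho_j(\betan))$ depend analytically on $\betan$ away from the discriminant locus, which is the zero set of the resultant of $p$ and $p'$ in $\betan$ and hence a finite subset of $\bbC$. By \cref{lem:dispersionpoly}, $p$ has no real roots when $\Im(\betan)\ne 0$, so the continuation of the positive real root $\rho_1$ from $\betan\in\bbR$ stays near the positive real axis for $\betan$ in a small complex neighborhood of $\bbR$, while the remaining $\rho_j$ stay bounded away from the real axis.

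Next I would define a candidate extension $\tilde{G}(\betan)$ by the expression \cref{eq:GS}, interpreted for complex $\betan$ via the analytic roots and residues from the previous step. For $\betan$ close to a non-discriminant $\betan^{*}\in\bbR$, the argument $\rho_1(\betan)r$ is near the positive real axis and therefore avoids the branch cuts of $\mathbf{K}_0$ and $H_0^{(1)}$, while for $j\geq 2$, $\rho_j(\betan^{*})$ is bounded away from the real axis, so $-\rho_j(\betan)r$ avoids the branch cut of $\mathbf{K}_0$ on a small complex disk around $\betan^{*}$. Every term in \cref{eq:GS} is therefore a composition of analytic functions of $\betan$, and $\tilde{G}$ is analytic on an open neighborhood of $\bbR$ minus the discriminant points.

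Then I would glue $\tilde{G}$ to the upper-half-plane formula \cref{eq:GSeps}. The limiting-absorption calculation carried out in \cref{app:greensfun} shows that $\lim_{\epsilon\to 0^+} \mathbf{K}_0(-\rho_1(\betan+i\epsilon)r)=-\mathbf{K}_0(\rho_1 r)+2iH_0^{(1)}(\rho_1 r)$, while the $j\geq 2$ terms extend continuously. Hence \cref{eq:GSeps} has continuous boundary values on $\bbR$ that coincide with $\tilde{G}$. Since the difference \cref{eq:GSeps}$-\tilde{G}$ is analytic in the upper half of a complex disk around each non-discriminant $\betan^{*}\in\bbR$ and has zero continuous boundary values on the real diameter, a Schwarz-reflection argument (extending the difference by zero across $\bbR$ and invoking Morera's theorem) forces it to vanish in the upper half disk. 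Gluing then produces an analytic function on $\{\Im(\betan)>0\}\cup U$, where $U$ is an open neighborhood of $\bbR$ avoiding the discriminant points.

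The main obstacle is the discriminant locus, where two roots of $p$ coincide and the individual residues $e_j$ blow up. I would remove these singularities by a Puiseux argument: near a double-root point $\betan^{**}$, two colliding roots satisfy $\rho_{\pm}=\rho^{*}\pm c\sqrt{\betan-\betan^{**}}+\cdots$, and summing their contributions in \cref{eq:GS} produces a combination symmetric in the two branches, hence single-valued and bounded in a punctured neighborhood, and therefore analytic by the removable singularity theorem. The resulting limiting expression is precisely the formula with the $\partial_\rho\mathbf{K}_0$ correction derived in \cref{sec:repeatedroots}. Since the discriminant locus is finite, this covers the remaining isolated exceptional points and gives analyticity on a full open neighborhood of $\{\Im(\betan)\geq 0\}$.
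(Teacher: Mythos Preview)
Your approach is correct and genuinely different from the paper's. You work with the explicit five-root formula \cref{eq:GS}, track all roots and residues as analytic functions of $\betan$, and then remove the apparent singularities at the discriminant locus via a Puiseux symmetrization argument. The paper instead works at the level of the Fourier symbol: it factors out only the single positive real root $\rho=\rho_1$ (which is always simple for $\gamma<0$, since a double positive root would force $\gamma=4\alphn\rho^5>0$) and leaves the remaining degree-four factor intact. Because that factor has no zeros on $[0,\infty)$, its contribution to the inverse Fourier transform can be handled directly by contour deformation without ever resolving it into individual roots, so discriminant points never enter the argument. Your route stays closer to the special-function representation already derived in \cref{thm:greensfun}; the paper's route sidesteps root collisions entirely at the cost of a less explicit intermediate expression.

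Two small points. First, your claim that the non-$\rho_1$ roots ``stay bounded away from the real axis'' is not literally true: for large real $\betan$ the polynomial can acquire negative real roots. What you actually need, and what does hold, is the weaker statement that none of the $\rho_j$ with $j\geq 2$ lie on the \emph{positive} real axis, so that $-\rho_j r$ avoids the branch cut of $\mathbf{K}_0$. Second, the Schwarz-reflection step is more than you need: the identity $\mathbf{K}_0(-w)=-\mathbf{K}_0(w)+2iH_0^{(1)}(w)$, valid for $\Im w>0$, shows that \cref{eq:GSeps} and \cref{eq:GS} already coincide as analytic functions of $\betan$ throughout a strip in the open upper half-plane, so the gluing is immediate and no boundary argument is required.
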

\begin{proof}
It is elementary to show that for $\alphn>0$ and $\gamma<0$ fixed, the polynomial $\alphn z^5-\betan z+\gamma$ has exactly one positive real root for any $\beta \in \mathbb{R}$, which we denote by $\rho(\betan)$. It follows that, in a neighborhood, this root, being isolated, is an analytic function of $\betan.$ Thus, in the vicinity of any $\beta_\ast \in \mathbb{R}$ we can perform the partial fraction decomposition, splitting $z\, \alphn\,(\alphn z^5-\betan z + \gamma)^{-1}$ into 
$$ \frac{1}{4 \rho^4-\frac{\gamma}{\alphn \rho}}\frac{\rho}{z-\rho} -\frac{\rho}{4 \rho^4 - \frac{\gamma}{\alphn \rho}} \frac{z^3 + 2 \rho z^2 + 3 \rho^2 z + \frac{\gamma}{\alphn \rho^2}}{z^4 +\rho z^3 + \rho^2 z^2 + \rho^3 z -\frac{\gamma}{\alphn \rho}}$$
where the root $\rho$ is an analytic function of $\betan$ near $\beta_\ast.$ Moreover, the denominator of the second term is bounded away from zero for all $z \in \mathbb{R}^+ \cup \{0\},$ and $\rho(\beta_\ast) >0.$ To proceed further, we can rewrite this as 
\begin{align}
&\frac{2 \rho^2}{4 \rho^4 - \frac{\gamma}{\alphn \rho}} \left[\frac{1}{z^2-\rho^2}-\frac{1}{z^2+\rho^2}\right] \\
&- \frac{\rho}{4 \rho^4 - \frac{\gamma}{\alphn \rho}}\left[\frac{z^3 + 2 \rho z^2 + 3 \rho^2 z + \frac{\gamma}{\alphn \rho^2}}{z^4 +\rho z^3 + \rho^2 z^2 + \rho^3 z -\frac{\gamma}{\alphn \rho}}-\frac{1}{z+\rho}-\frac{2\rho}{z^2+\rho^2}\right].\nonumber
\end{align}
We label these terms as $T_1$ and $T_2$, respectively.

We note that by the construction of the partial fraction decomposition, the second term, $T_2$, is $O(z^{-4})$ as $z \to \infty$ uniformly for $\beta$ in a neighborhood of $\beta_\ast.$ Replacing $z$ in the previous expression by $\sqrt{\xi \cdot \xi}$ and taking the inverse Fourier transform, the above calculations show that the inverse Fourier transform of the first term, $T_1$, gives rise to an analytic function of $\betan,$ $\Im \betan >0,$ which can be analytically continued to $\Im \betan \le 0.$ For ${\bf r} \neq {\bf 0},$ performing a contour deformation so that $\arg \sqrt{\xi \cdot \xi} \equiv \theta_0 <0,$ it is clear that the contribution of the second term is analytic in $\beta$ near $\beta_\ast.$ Finally, straightforward asymptotic analysis shows that near ${\bf r} = 0,$ the inverse Fourier transform of $T_2$ looks like $\sigma(\betan) r^2 \log(r)+\psi(r,\betan),$ where $\sigma$ is analytic in $\betan$, $\psi$ is three times differentiable in $r$ and analytic in $\betan.$

For $\Im \beta_\ast >0$ we can establish analyticity in $\beta$ near $\beta_\ast$ by directly by taking the inverse Fourier transform of $|\xi|/(\alphn |\xi|^5-\betan |\xi|+\gamma).$ Similarly, for $\Im \beta_\ast <0$ we can perform the same analysis, combined with the identity ${\bf K}_0(z e^{2\pi i}) = {\bf K}_0(z) - 4i J_0(z),$ to continue onto the next sheet.

It follows immediately that $\GS$ is analytic in $\betan$ in an open neighborhood of $\Im \betan \ge 0.$ Similar reasoning can be applied to the derivatives of $\GS$ up to order 3. It follows immediately that the integral operators in~\cref{eq:LS} are analytic in this region.
\end{proof}

\section{Numerical evaluation of Struve and related functions}
Though Struve functions arise in a wide variety of applications, there appear to be relatively few accurate and efficient algorithms for their evaluation. In this section we briefly describe a numerical method for their computation. For ease of exposition, we define the related functions ${\bf R}_n$ defined by
$${\bf R}_n(z) := J_n(z) + i {\bf H}_n(z).$$
Here we focus on ${\bf R}_0$ and ${\bf R}_1.$ In principle, together with the three-term recurrence,
$${\bf R}_{\nu-1}(z)+{\bf R}_{\nu+1}(z) = \frac{2\nu}{z}{\bf R}_{\nu}(z)+ \frac{i \left(\frac{z}{2}\right)^{\nu}}{\sqrt{\pi}\Gamma\left(\nu + \frac{3}{2} \right)} $$
this can be extended to the computation of higher-order Struve functions. Derivatives of ${\bf R}_\nu$ can similarly be calculated using the relation
$${\bf R}_{\nu-1}(z) - {\bf R}_{\nu+1}(z) = 2 {\bf R}_{\nu}'(z)- \frac{i \left(\frac{z}{2} \right)^{\nu}}{\sqrt{\pi}\Gamma\left(\nu+\frac{3}{2} \right)}.$$
We consider two separate cases, depending on the magnitude of $z.$ For small $z$ we use generalized Gaussian quadrature together with an integral representation of ${\bf R}_{n}$. For large $z$ we use an asymptotic expansion.

In the first case, we use the following Lemma which follows immediately from \cite{abramowitz1948handbook}.
\begin{lemma}
    Let ${\bf R}_n(z)$ be the function defined by ${\bf R}_n(z)= J_n(z) +i {\bf H}_n(z).$ Then, for any $z \in \mathbb{C}$, and $n\ge 0,$ ${\bf R}_n$ has the integral representation
    $${\bf R}_n(z) = \frac{2
\left( \frac{z}{2}\right)^n}{\sqrt{\pi}\Gamma\left(n+ \frac{1}{2}\right)} \int_0^1 (1-t^2)^{n-1/2}e^{izt}\,{\rm d}t.$$
\end{lemma}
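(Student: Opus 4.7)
The plan is to derive the representation by combining two well-known integral formulas, one for the Bessel function and one for the Struve function, both of which are listed in \cite{abramowitz1948handbook}. The key observation is that if ${\bf R}_n$ has the claimed representation with $e^{izt}$ in the integrand, then its real part must agree with the Poisson integral for $J_n$ while its imaginary part must agree with the analogous integral for ${\bf H}_n$; since both of these identities are classical, the lemma follows immediately by superposition.

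Concretely, first I would recall Poisson's integral for the Bessel function, valid for $\Re(n)>-1/2$ and all $z\in\mathbb{C}$,
\begin{equation*}
J_n(z) \;=\; \frac{2\left(\tfrac{z}{2}\right)^n}{\sqrt{\pi}\,\Gamma\!\left(n+\tfrac{1}{2}\right)} \int_0^1 (1-t^2)^{n-1/2}\cos(zt)\,dt,
\end{equation*}
and then the analogous Rayleigh-type representation for the Struve function,
\begin{equation*}
{\bf H}_n(z) \;=\; \frac{2\left(\tfrac{z}{2}\right)^n}{\sqrt{\pi}\,\Gamma\!\left(n+\tfrac{1}{2}\right)} \int_0^1 (1-t^2)^{n-1/2}\sin(zt)\,dt,
\end{equation*}
which holds under the same hypotheses. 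Both identities can be verified by expanding $\cos(zt)$ and $\sin(zt)$ into their power series, interchanging sum and integral (justified by uniform convergence on $[0,1]$), and recognizing each resulting coefficient as a beta-function integral that reproduces the defining series of $J_n$ and ${\bf H}_n$, respectively.

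Next I would multiply the Struve identity by $i$, add it to the Bessel identity, and invoke Euler's formula $\cos(zt)+i\sin(zt)=e^{izt}$ to obtain the desired integral representation for ${\bf R}_n(z)=J_n(z)+i{\bf H}_n(z)$. The only condition to check is integrability of $(1-t^2)^{n-1/2}$ at the endpoint $t=1$, which requires $n>-1/2$ and is automatic under the hypothesis $n\ge 0$. There is no real obstacle here: the lemma is a direct repackaging of two standard special-function identities, as the authors themselves acknowledge by stating that the result follows immediately from \cite{abramowitz1948handbook}.
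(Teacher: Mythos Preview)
Your proposal is correct and matches the paper's approach exactly: the paper does not spell out a proof at all but simply states that the lemma ``follows immediately from \cite{abramowitz1948handbook}'', and what you have written is precisely the unpacking of that citation---combining the Poisson integral for $J_n$ with the analogous sine-kernel representation of ${\bf H}_n$ via Euler's formula.
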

Next, we have the following result which provides an upper bound on the number of quadrature nodes required to numerically compute the above integrals for $z$ in the upper halfplane with non-negative imaginary part.
\begin{proposition}
For $z \in \mathbb{C}$ with $\Im z \ge 0,$ consider the function $f_z:[0,1] \to \mathbb{C}$ defined by
$$f_z(t) = \frac{(1-t^2)^{n}}{\sqrt{1+t}}e^{izt}.$$
For any $\epsilon>0$ there exists an $m < C(n+|z|+\log \epsilon^{-1})$ together with a polynomial $p \in P_m$ such that $\|p - f_z\|_{\infty}<\epsilon.$ Moreover, if $|z|<R$ then $m$ can be chosen independent of $|z|$.
\end{proposition}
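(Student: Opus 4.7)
The plan is to factor
\[
f_z(t) \;=\; (1-t^2)^{n}\cdot \frac{1}{\sqrt{1+t}}\cdot e^{izt},
\]
and exploit the fact that the first factor is already a polynomial of degree $2n$ with $\|(1-t^2)^n\|_{L^\infty[0,1]}\le 1$. The two remaining factors are of fundamentally different character: one is a fixed smooth function on $[0,1]$ that is independent of $z$ and $n$, while the other is an oscillatory function whose ``complexity'' grows with $|z|$. I would approximate them separately by polynomials $q_{m_1}$ and $T_{m_2}$ of degrees $m_1$ and $m_2$, and set $p(t) := (1-t^2)^n\,q_{m_1}(t)\,T_{m_2}(t)$, giving a polynomial of total degree $2n+m_1+m_2$. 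Since, for $\Im z\ge 0$, both $|(1+t)^{-1/2}|\le 1$ and $|e^{izt}|\le 1$ on $[0,1]$, the error in the product is controlled by the sum of the two individual errors up to constants close to $1$, so requiring each sub-approximation to have error $\le \epsilon/3$ yields $\|p-f_z\|_\infty < \epsilon$.

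For the factor $(1+t)^{-1/2}$, I would invoke standard Bernstein/Chebyshev approximation theory on $[0,1]$. This function extends analytically to $\mathbb{C}\setminus(-\infty,-1]$; in particular, it is analytic on the Bernstein ellipse for $[0,1]$ (foci $0$ and $1$) with parameter $\rho$ for any $\rho < 3+2\sqrt{2}$, the threshold at which the ellipse first meets $-1$. Fixing, say, $\rho=3$, the function is uniformly $O(1)$ on this ellipse, so Bernstein's theorem produces a polynomial $q_{m_1}$ with $\|q_{m_1}-(1+\cdot)^{-1/2}\|_{L^\infty[0,1]} \le C\,3^{-m_1}$. Hence $m_1 = O(\log \epsilon^{-1})$ suffices, with an absolute implicit constant.

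For the oscillatory factor, I would use the truncated Taylor series $T_{m_2}(t)=\sum_{k=0}^{m_2}(izt)^k/k!$; the Lagrange remainder on $[0,1]$ is bounded by $|z|^{m_2+1}/(m_2+1)!$. By Stirling's formula, once $m_2+1\ge 2e|z|$, consecutive tail terms contract by a factor of at most $1/2$, giving a remainder bound of $O(2^{-m_2})$; choosing $m_2 = O(|z|+\log\epsilon^{-1})$ therefore forces the error below $\epsilon/3$. Combining, the total degree is $m = 2n + m_1 + m_2 \le C(n+|z|+\log\epsilon^{-1})$ for an absolute constant $C$. If $|z|<R$, we may absorb the $|z|$ contribution into the dependence on $R$, choosing $m_2$ as a function of $R$ and $\epsilon$ alone, so that $m$ is independent of $|z|$.

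The main subtlety is to ensure that the product error estimate is clean: writing
\[
p - f_z = (1-t^2)^n\bigl[(q_{m_1}-(1+t)^{-1/2})T_{m_2} + (1+t)^{-1/2}(T_{m_2}-e^{izt})\bigr],
\]
we need $\|T_{m_2}\|_{L^\infty[0,1]}$ bounded by an absolute constant, which follows from the triangle inequality and $\|e^{izt}\|_{L^\infty[0,1]}\le 1$, crucially using $\Im z \ge 0$. Were this hypothesis dropped, $|e^{izt}|$ could grow like $e^{|z|}$ on $[0,1]$, and the product error would acquire an unacceptable factor; so the condition $\Im z\ge 0$ enters precisely to keep the constants absolute in the statement.
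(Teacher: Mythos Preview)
The paper states this proposition without proof, so there is no original argument to compare against. Your approach is correct and is the natural one: factor $f_z$, note that $(1-t^2)^n$ is already a polynomial of degree $2n$ with sup norm at most $1$, approximate $(1+t)^{-1/2}$ on $[0,1]$ via Bernstein's theorem (the branch point at $t=-1$ lies outside a fixed Bernstein ellipse, giving geometric convergence with absolute constants and hence $m_1=O(\log\epsilon^{-1})$), and approximate $e^{izt}$ by its Taylor polynomial of degree $m_2=O(|z|+\log\epsilon^{-1})$ using the standard Stirling estimate on the remainder. The product error bound is clean because $\Im z\ge 0$ forces $\|e^{izt}\|_{L^\infty[0,1]}\le 1$, whence $\|T_{m_2}\|_\infty\le 1+\epsilon/3$; your decomposition then yields $\|p-f_z\|_\infty\le 2\epsilon/3+\epsilon^2/9<\epsilon$ for $\epsilon<3$, and for larger $\epsilon$ the zero polynomial suffices since $\|f_z\|_\infty\le 1$. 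The uniform-in-$z$ statement for $|z|<R$ follows immediately by taking $m_2$ to depend on $R$ rather than $|z|$.
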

In light of the above proposition, given a tolerance $\epsilon,$ and letting $m = m(n,\epsilon)$, then it follows that there exists an $m$ point quadrature rule which integrates 
$$I_{n,z} =\int_0^1 (1-t^2)^{n-1/2}e^{izt}\,{\rm d}t$$
with an error bounded by $2\epsilon$ for any $z \in \mathbb{C},$ with $|z|<R,$ and $\Im z \ge 0.$ 

The above analysis guarantees slow growth in the size of the quadrature rule as a function of the $z$ cutoff, accuracy requirements, and order. In principle, it can be used constructively to produce suitable quadratures. In our calculations we fix $\epsilon = 10^{-12}$ and $R=95$. Using a generalized Gaussian quadrature method \cite{ggq,ggq2}, we obtain a 38 point quadrature rule to approximate $I_{n,z}$ for $n=0,1.$
In order to be able to compute ${\bf R}_{0,1}$ everywhere in the upper halfplane we require an algorithm for computing values for $|z| \ge 95.$ Here we use the asymptotic series. The cut-off of $|z|= 95$ is chosen to guarantee that the asymptotic series gives at least an absolute error of $10^{-12}$. In particular, we have the expansion \cite{abramowitz1948handbook}
$${\bf K}_{\nu}(z) \sim \frac{1}{\pi}\sum_{n=0}^\infty \frac{\Gamma(n+1/2)\, \left( \frac{z}{2}\right)^{\nu-2k-1}}{\Gamma(\nu+1/2-k)}.$$
This, coupled with the identity
$${\bf R}_\nu(z) = {\bf K}_{\nu}(z)-H_\nu(z),$$
gives an asymptotic expansion for ${\bf R}_\nu$ in terms of the above expansion and the standard asymptotic expansions for the Hankel function $H_\nu$ \cite{abramowitz1948handbook}.

\section{Derivatives of the Green's function}\label{app:greensderivatives}

In this section, we compute the derivatives of the Green's function \eqref{eq:GS}. Recall that we have the following recurrence relations for the Hankel function of the first kind: 
\begin{align*}
    H^{(1)'}_0 (z) &= - H^{(1)}_1 (z) \, , \\
    H^{(1)'}_1 (z) &= H^{(1)}_{0} (z) - \frac{1}{z}  H^{(1)}_1 (z) \, ,
\end{align*}
As well as the following recurrence relations for the Struve function of the second kind:
\begin{align*}
    \mathbf{K}'_0 (z) &= \frac{2}{\pi} - \mathbf{K}_1 (z) \\
    \mathbf{K}'_1 (z) &= \mathbf{K}_{0} (z) - \frac{1}{z} \mathbf{K}_1 (z)
\end{align*}
Taking derivatives of \eqref{eq:GS} with respect to the target variable $\br$, we obtain:
\begin{align*}
      &\nabla_\br \, \GS(\br,\br')= \frac{\br - \br'}{|\br - \br'|} \, M_1(\br, \br') \\
    & H [ \GS ](\br, \br')=  \frac{-1}{|\br-\br'|^3} \begin{pmatrix}
       (x-x')^2 - (y-y')^2 & 2(x-x')(y-y') \\
       2(x-x')(y-y') & (y-y')^2 - (x-x')^2 
   \end{pmatrix} M_1(\br,\br') \nonumber \\
   &\qquad\qquad\qquad\qquad\qquad + \frac{1}{|\br-\br'|^2} \begin{pmatrix}
       (x-x')^2 & (x-x')(y-y') \\
       (x-x')(y-y') & (y-y')^2
   \end{pmatrix} M_2(\br,\br') \label{eq:hessGS} \\
   & \Delta_\br \,  \GS ( \br, \br') =  M_2(\br, \br')  \\
   & \nabla_\br \, \Delta_\br \, \GS(\br,\br') = - \frac{\br - \br'}{|\br - \br'|} \, M_3(\br,\br') 
\end{align*}
where $H [\GS]$ is the Hessian of $\GS$ and the functions $M_1, M_2, M_3$ are given by:
\begin{align*}
    M_1(\br,\br') &= \frac12 e_1 \rho_1^3 [\mathbf{K}_1 (\rho_1 |\br - \br'|) - 2i H_1^{(1)} (\rho_1 |\br- \br'| ) ] + \frac12 \sum_{j=2}^5 e_j \rho_j^3 \mathbf{K}_1(-\rho_j |\br - \br'|) \\
    M_2(\br,\br') &= \frac{1}{2} e_1 \rho_1^{4}  [ \mathbf{K}_0(\rho_1 |\br - \br'|)   - 2 i H_0^{(1)} (\rho_1 |\br - \br'|)] -  \frac{1}{2}\sum_{j=2}^5 e_j \rho_j^4 \bK_0(-\rho_j |\br - \br'|) \\
    M_3(\br,\br') &= \frac12 e_1 \rho_1^5 [\mathbf{K}_1 (\rho_1 |\br - \br'|) - 2i H_1^{(1)} (\rho_1 |\br- \br'| ) ] + \frac12 \sum_{j=2}^5 e_j \rho_j^5 \mathbf{K}_1(-\rho_j |\br - \br'|) 
\end{align*}

\section{Proof of convergence}
\label{app:cc}
Given a second kind integral
equation with a weakly-singular integral kernel with smooth enough coefficients and a suitable quadrature
rule, it is natural to expect the resulting Nystr\"{o}m scheme to converge at the same order as the quadrature 
rule. In this section, we prove this for a class of integral equations which includes~\cref{eq:LS}, discretized using a corrected trapezoid rule. The proof follows standard lines, see~\cite{kress,nystrom}; we include it here for completeness. The crux is to show that, for fine enough grids, the discretized linear system is invertible, with a bound on the inverse independent of the grid size. This is
established in \cref{cor:fulldisc}.

Given a region $\Omega$ containing the support of $f$, let $\Omega_{0} = [-L,L]\times[-L,L]$ be such $\Omega \subset \Omega_{0}$ and that the boundary of $\Omega$ is a distance $d>0$ from the 
boundary of $\Omega_{0}$.
We consider the case in which we are given an integral operator $K:C(\Omega_0) \to C(\Omega_0)$, with kernel
$$k(\bx,\by) = \sum_{j=1}^m \beta_j(\bx) \, w_j(\bx-\by).$$
We further assume that the functions $\beta_j$ are smooth and compactly supported in $\Omega$, and that each $w_j$ satisfies estimates of the kind
$$|w(\bx)| \le \frac{w_*}{|\bx|}, \quad |\nabla w(\bx)| \le \frac{w_d}{|\bx|^2}$$
for all $|x| \le {\rm diam}\, \Omega_0.$ Let $\beta_*:= \max_j \| \beta_j\|_\infty$ and $\beta_d := \max_j \| \,|\nabla\beta_j|\,\|_\infty.$

In the discretization of our problem we consider equispaced grids with spacing $h.$ In the following, we assume that $h = 2L/(2N+1)$ for some large enough integer $N>0$. Let $\bxi = {\bf i}h,$ where ${\bf i} =(i_1,i_2),$ $i_{1},i_{2} \in -N,-N+1,\ldots N$ denote the locations of the discretization nodes. Given the plethora of indices in the following, we suppress the dependence of $h$ on $N$. 

For each $h$, each point $\bxi$ and each $j$, we construct a quadrature for $w_j$ which is of the form $\omega_{\bf i,\bf i'}^{j,h} = w_j(h({\bf i} - {\bf i'}))\,h^2$ if ${\bf i}-{\bf i'} \notin \mathcal{C}$ and $\omega_{\bf i,\bf i'}^{j,h} = \alpha^j_{\bf i - \bf i'}(h)$ if ${\bf i} - {\bf i'} \in \mathcal{C}.$ Here $\mathcal{C}$ is a fixed finite set (the stencil) independent of $h$ and $\alpha_{\bf i}^j(h)$ are quadrature corrections which may depend on $h$ but satisfy the bound $|\alpha_{\bf i}^j(h)| \le \alpha_* h$. 
We further require $(0,0) \in \mathcal{C}.$
Furthermore, we assume that the stencil and corrections are chosen so that for some integer $p>0,$ and any smooth function $\mu$ on $\Omega_{0}$ 
\begin{equation}
\label{eq:quadacc}
\sup_{x^h_{\bf i}\in \Omega}\left|\sum_{\bf i'} \omega_{\bf i,\bf i'}^{j,h} \mu(\bxip) - \int_{\Omega_0} w_j(\bxi - \by)\,\mu(\by)\,{\rm d}\by\right| < A h^p
\end{equation}
for some constant $A$ depending on $\mu.$

The discretization of $K$ is then given by 
$$\underline{K}^h[\mu](\bxi) = \sum_{\bf i'} \sum_{j=1}^m \beta_j(\bxi) \, \omega_{\bf i,i'}^{j,h} \mu(\bxip).$$
We can extend this to a continuous function on $\Omega_0$ by setting $\underline{K}^h[\mu](\bx)$ to be the continuous piecewise linear interpolant agreeing with $\underline{K}^h[\mu](\bxi)$ at each $\bxi$, and $0$ on $\partial \Omega_0.$ The interpolant is chosen to be linear on each triangle formed by splitting one of the grid squares in half along the bottom left to top right diagonal. For ease of exposition we set {$L_{\bf i}^h(\bx)$} to be the coefficient of $\mu(\bxi)$ in $\underline{K}^h[\mu](\bx).$ 

As noted above, the main analytical result we will establish is that the
finite-rank operators $\{\underline{K}^h\}$ and the corresponding 
fully-discrete operators for such a quadrature rule are invertible 
for sufficiently small $h$. The aim of this appendix is to prove the following three results.
\begin{theorem}
\label{thm:semidiscinvertible}
    Suppose that $I+K$ is invertible on $C(\Omega_0)$. 
    In the notation above, $(I+\underline{K}^h)^{-1}$ exists and 
    $\|(I+\underline{K}^h)^{-1}\|_{C(\Omega_0)}$ is bounded by
    a constant independent of $h$, for $h$ sufficiently small.
\end{theorem}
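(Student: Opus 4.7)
The plan is to invoke the classical Anselone--Atkinson perturbation theorem for collectively compact operator approximations (see, e.g.~\cite{kress}): if $\{T^h\}$ is a family of bounded linear operators on a Banach space $X$ that is collectively compact, converges pointwise to $T$, and $(I+T)^{-1}$ exists on $X$, then $(I+T^h)^{-1}$ exists and is uniformly bounded in $h$ for all sufficiently small $h$. It therefore suffices to verify three facts about $\{\underline{K}^h\}$ as operators on $C(\Omega_0)$: (i) uniform boundedness of $\|\underline{K}^h\|$ in $h$, (ii) equicontinuity of $\{\underline{K}^h\mu\,:\,\|\mu\|_\infty\le 1,\;h<h_0\}$, and (iii) pointwise convergence $\underline{K}^h\mu\to K\mu$ in $C(\Omega_0)$ for each fixed $\mu\in C(\Omega_0)$.

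For (i), at any grid node $\bxi$ I would split the sum defining $\underline{K}^h\mu(\bxi)$ into the corrected part over $\bf{i}-\bf{i}'\in\mathcal{C}$ and the trapezoidal part. The corrected part contributes at most $m\beta_*\alpha_*|\mathcal{C}|h\|\mu\|_\infty$, which is $O(h)$. The trapezoidal part is bounded via the quadrature accuracy~\cref{eq:quadacc} applied to a smooth bump equal to $1$ on a neighborhood of $\Omega$, combined with the integrability estimate $\int_{\Omega_0}|w_j(\bxi-\by)|\,\dd\by\le w_*\,C(L)$ coming from the assumed $1/|\bx|$ bound; this gives a uniform bound independent of $h$. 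Interpolation onto $\Omega_0$ with the piecewise linear extension preserves the $\sup$-norm.

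For (ii), I would estimate the modulus of continuity of $\underline{K}^h\mu$ on each triangle of the triangulation. Away from the singularity of $w_j$, the difference of weights $\omega^{j,h}_{\bf{i},\bf{i}'}-\omega^{j,h}_{\bf{k},\bf{i}'}$ for neighboring nodes $\bxi,\bxl$ with $|\bxi-\bxl|=O(h)$ is controlled by the gradient bound $w_d$, while near the singularity it is controlled by the stability bound $|\alpha^j_{\bf{i}}(h)|\le\alpha_* h$ and the size of the stencil. Combined with the smoothness of the $\beta_j$, this yields a uniform Hölder-type estimate on $\underline{K}^h\mu$ valid for $\|\mu\|_\infty\le 1$ and $h<h_0$. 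The linear interpolation step preserves this estimate up to a constant, giving the desired equicontinuity. By Arzelà--Ascoli this establishes collective compactness.

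For (iii), pointwise convergence first for $\mu\in C^\infty_c(\Omega_0)$ follows directly from~\cref{eq:quadacc}, giving $\sup_{\bxi}|\underline{K}^h\mu(\bxi)-K\mu(\bxi)|=O(h^p)$, and the linear interpolation error is controlled by the equicontinuity in (ii). Extension to general $\mu\in C(\Omega_0)$ uses density of smooth functions together with the uniform bound in (i) via a standard $3\epsilon$ argument. With (i)--(iii) in hand, the Anselone--Atkinson theorem yields the existence of $(I+\underline{K}^h)^{-1}$ on $C(\Omega_0)$ and a bound on its norm independent of $h$ for all sufficiently small $h$.

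The main technical obstacle is step (ii): the corrected weights $\alpha^j_{\bf{i}}(h)$ near the singularity must be handled carefully, because naive bounds coming from the $w_*/|\bx|$ estimate would blow up. The key point is that the stability property $|\alpha^j_{\bf{i}}(h)|\le\alpha_*h$ together with the finite cardinality of $\mathcal{C}$ ensures that the contribution of the singular stencil to differences $\underline{K}^h\mu(\bxi)-\underline{K}^h\mu(\bxl)$ is $O(h)$, and hence negligible in the equicontinuity estimate. The remaining (non-stencil) terms are smooth and are handled by the $w_d$ bound.
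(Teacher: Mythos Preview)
Your proposal is correct and follows essentially the same route as the paper: apply the Anselone--Atkinson collectively compact perturbation theorem (\cite[Thm.~10.12, Cor.~10.11]{kress}) after verifying uniform boundedness, equicontinuity, and pointwise convergence of $\{\underline{K}^h\}$, exactly as in \cref{lem:khbounded,lem:khequicontinuous,lem:khpw}. The only place where the paper's proof requires more care than your sketch suggests is step~(ii): since $|\nabla w_j|\lesssim |\bx|^{-2}$ is not summable over the 2D grid, the trapezoidal (non-stencil) part needs an inner/outer region split with radius $\sim h^{\eta-1}|\bell-\bell'|^\eta$ to get the H\"older-$\eta$ bound, rather than the gradient estimate alone.
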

The following corollary is an immediate consequence of the previous theorem.
\begin{corollary}
\label{cor:fulldisc}
    Under the same conditions as \cref{thm:semidiscinvertible}, 
    the matrix $\mathbf{I}+\mathbf{K}^h$, where $K^h_{\mathbf{i}\mathbf{j}} = L_{\bf j}^h(\bxi)$,
    is invertible and the $\ell^\infty$ norm condition number of the family
    of matrices satisfies a uniform bound for $h$ sufficiently small.
\end{corollary}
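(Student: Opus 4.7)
The plan is to transfer both invertibility and the operator-norm bound for $(I+\underline{K}^h)^{-1}$ on $C(\Omega_0)$, provided by \cref{thm:semidiscinvertible}, down to the finite matrix acting on $\ell^\infty$. The key structural observation is that $\underline{K}^h$ depends on its input only through grid-point values, and that its image consists of piecewise linear functions on the triangulation of $\Omega_0$ that vanish on $\partial \Omega_0$. This makes the continuous and discrete equations essentially interchangeable at the grid.

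For injectivity, suppose $\mathbf{v}$ lies in the kernel of $\mathbf{I}+\mathbf{K}^h$ and let $V \in C(\Omega_0)$ be the piecewise linear interpolant of $\mathbf{v}$ that vanishes on $\partial \Omega_0$. At every grid point,
\[ V(\bxi)+\underline{K}^h[V](\bxi) = v_{\mathbf{i}}+\sum_{\mathbf{j}} K^h_{\mathbf{i}\mathbf{j}} v_{\mathbf{j}} = 0, \]
and since $V+\underline{K}^h[V]$ is piecewise linear on the same triangulation, vanishes on $\partial \Omega_0$, and vanishes at every grid point, it must vanish identically on $\Omega_0$. \cref{thm:semidiscinvertible} then forces $V \equiv 0$, so $\mathbf{v}=0$ and invertibility of the finite matrix follows. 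For the inverse bound, given $\mathbf{f}$ with $\|\mathbf{f}\|_{\ell^\infty}=1$, let $F$ be its piecewise linear interpolant vanishing on $\partial \Omega_0$. Since piecewise linear functions attain their extrema at vertices, $\|F\|_{C(\Omega_0)}\le 1$. Setting $\mu := (I+\underline{K}^h)^{-1}F$, \cref{thm:semidiscinvertible} gives $\|\mu\|_{C(\Omega_0)} \le C$ with $C$ independent of $h$. Evaluating $\mu+\underline{K}^h[\mu]=F$ at each $\bxi$ shows that $(\mu(\bxi))_{\mathbf{i}}$ solves $(\mathbf{I}+\mathbf{K}^h)\mathbf{w}=\mathbf{f}$, so it equals $(\mathbf{I}+\mathbf{K}^h)^{-1}\mathbf{f}$ and has $\ell^\infty$ norm at most $C$.

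To finish, I would bound $\|\mathbf{I}+\mathbf{K}^h\|_{\ell^\infty}$ uniformly in $h$ by controlling the row sums $\sum_{\mathbf{j}}|K^h_{\mathbf{i}\mathbf{j}}|$. Using $|K^h_{\mathbf{i}\mathbf{j}}| \le \beta_{*} \sum_{l=1}^m |\omega_{\mathbf{i},\mathbf{j}}^{l,h}|$, the contributions from indices with $\mathbf{i}-\mathbf{j}\notin \mathcal{C}$ form a Riemann-type sum approximating $\int_{\Omega_0}|w_l(\bxi-\by)|\,\dd\by$, which is finite uniformly in $\bxi$ since the $|\bx|^{-1}$ singularity is integrable in two dimensions. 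The finitely many stencil corrections each contribute at most $\alpha_{*} h$, giving an $O(h)$ perturbation per row. The most delicate point of the argument is the first step: one must verify that $V+\underline{K}^h[V]$ is piecewise linear on exactly the same triangulation of $\Omega_0$ as $V$, so that vanishing at every grid point and on the boundary really forces it to vanish identically. Once that identification is secured, the corollary is an immediate consequence of \cref{thm:semidiscinvertible}.
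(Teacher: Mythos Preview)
Your argument is correct and mirrors the paper's proof: both rely on the structural fact that $I+\underline{K}^h$ preserves the finite-dimensional space of piecewise linear interpolants on the grid, then push injectivity and the inverse bound on $C(\Omega_0)$ down to $\ell^\infty$ at the grid points. The only cosmetic difference is that for the forward norm you re-derive the row-sum estimate directly, whereas the paper simply cites its \cref{lem:khbounded}; and the ``delicate point'' you flag is exactly the observation the paper records at the outset of its proof, which follows immediately from the definition of $\underline{K}^h$ as the piecewise linear interpolant of its grid-point values.
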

\begin{theorem}
\label{thm:convergence}
Suppose $f \in C^\infty_{c}(\Omega)$, extended to $0$ in $\Omega_{0}\setminus\Omega$. Suppose further that $\mu$ solves $(I+K)\mu = f$, and that $\mu^{h}$ solves $(I+K^{h})\mu^{h} = f$.
Then there exists a constant $C$ such that
$$\max_{i} | \mu(\bx_i)-\mu^h(\bx_i)| \le C h^p$$
    for h sufficiently small.
\end{theorem}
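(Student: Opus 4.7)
The plan is to carry out the classical stability-consistency argument for Nyström discretizations of second-kind integral equations. First, I would subtract $(I+K)\mu=f$ from $(I+\underline{K}^h)\mu^h=f$, evaluated at each grid point $\bxi$, to isolate the error. Setting $e^h_{\bf i} := \mu(\bxi) - \mu^h(\bxi)$, this yields the linear system
\begin{equation*}
(\mathbf{I}+\mathbf{K}^h)\,e^h \;=\; r^h\,, \qquad r^h_{\bf i} \;:=\; K[\mu](\bxi) - \underline{K}^h[\mu](\bxi)\,,
\end{equation*}
where $\mathbf{I}+\mathbf{K}^h$ is the matrix from \Cref{cor:fulldisc}. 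The result then reduces to bounding $r^h$ (consistency) and inverting $\mathbf{I}+\mathbf{K}^h$ uniformly in $h$ (stability).

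For the consistency bound, I would first argue that the solution $\mu$ is smooth on $\Omega_0$. This is the key regularity input needed to invoke the quadrature accuracy hypothesis~\eqref{eq:quadacc}, whose constant depends on the smoothness of the integrand. Smoothness of $\mu$ follows by a bootstrap applied to $\mu = f - K\mu$, using that $f \in C_c^\infty(\Omega)$, that each $\beta_j$ is smooth and compactly supported in $\Omega$, and that each $w_j$ is at worst weakly singular so that $K$ gains regularity on every pass (this is the abstract analogue of the argument in \Cref{compact} and of \Cref{cor:smoothness}). Granted smoothness of $\mu$, I would apply~\eqref{eq:quadacc} to each of the $m$ summands in
\begin{equation*}
\underline{K}^h[\mu](\bxi) - K[\mu](\bxi) \;=\; \sum_{j=1}^m \beta_j(\bxi)\left(\sum_{\bf i'}\omega^{j,h}_{\bf i,i'}\mu(\bxip) - \int_{\Omega_0} w_j(\bxi-\by)\mu(\by)\,\dd\by\right),
\end{equation*}
bounding the $\beta_j$ factors by $\beta_\ast$, to obtain $\|r^h\|_\infty \leq C_\mu h^p$ with $C_\mu$ independent of ${\bf i}$ and $h$.

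For stability, I would invoke \Cref{cor:fulldisc} directly: since $I+K$ is invertible on $C(\Omega_0)$, the matrices $\mathbf{I}+\mathbf{K}^h$ are invertible with $\ell^\infty$-operator-norm inverses uniformly bounded by a constant $M$, for all $h$ sufficiently small. Combining stability and consistency,
\begin{equation*}
\max_{\bf i}\,|\mu(\bxi) - \mu^h(\bxi)| \;=\; \|e^h\|_\infty \;\leq\; M\,\|r^h\|_\infty \;\leq\; M\,C_\mu\,h^p\,,
\end{equation*}
which is the claim with $C := M C_\mu$.

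The main obstacle is really \Cref{thm:semidiscinvertible} (the uniform stability bound), which is the nontrivial content and is invoked here as a prior result; conceptually, it relies on a collectively-compact-operator argument showing that $\underline{K}^h \to K$ strongly while the family $\{\underline{K}^h\}$ remains compact uniformly in $h$. Once stability is in hand, the convergence rate is inherited directly from the quadrature rule, and the only other point requiring care is the smoothness of $\mu$ needed to make the consistency constant $C_\mu$ finite.
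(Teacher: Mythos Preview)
Your proposal is correct and follows essentially the same stability--consistency argument as the paper: subtract the continuous and discrete equations at grid points to obtain $(\mathbf{I}+\mathbf{K}^h)e^h = r^h$, use smoothness of $\mu$ (the paper invokes \Cref{cor:smoothness} directly; your bootstrap sketch is the same idea) together with~\eqref{eq:quadacc} to bound the residual $r^h$ by $Ch^p$, and then apply the uniform bound on $(\mathbf{I}+\mathbf{K}^h)^{-1}$ from \Cref{cor:fulldisc}. The only cosmetic discrepancy is a sign in your definition of $r^h$, which of course does not affect the norm estimate.
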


We prove \cref{thm:semidiscinvertible} using a mild extension of the strategy employed in \cite[Ch. 10--12]{kress}. The idea of the proof
is to apply the following result about the pointwise convergence of collectively 
compact families of operators, which is a consequence of the
Banach-Steinhaus theorem or uniform boundedness principle.

\begin{lemma}{\cite[Thm. 10.12 and Cor. 10.11]{kress}}
\label{lem:kressmain}
    Let $X$ be a Banach space and let $A:X\to X$ be a compact
    operator such that $I+A$ is injective on $X$. 
    Suppose that $H>0$ and that $\{A^h\}$ for $0<h\leq H$
    is a collectively compact set of operators such that 
    $A^h \mu \to A\mu$ as $h\to 0$ for all $\mu\in X$. Then,
    $(I+A^h)^{-1}$ exists and $\|(I+A^h)^{-1}\|_X$ is bounded
    by a constant independent of $h,$ for $h$ sufficiently small. 
\end{lemma}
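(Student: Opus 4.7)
The plan is to verify the two hypotheses of \cref{lem:kressmain} applied with $X = C(\Omega_0)$, $A = K$, and $A^h = \underline{K}^h$. Since $I+K$ is assumed to be invertible on $C(\Omega_0)$ (hence in particular injective), and $K$ is compact (being weakly singular with kernel $\sum_j \beta_j(\bx) w_j(\bx-\by)$ where the $\beta_j$ are smooth and compactly supported in $\Omega$ and each $w_j$ is integrable over $\Omega_0$), the hypotheses on $K$ itself are in hand. Thus all the work goes into showing that $\{\underline{K}^h\}_{0<h\le H}$ is collectively compact and that $\underline{K}^h \mu \to K \mu$ pointwise (and then, by collective compactness, in norm) for every $\mu \in C(\Omega_0)$.

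For collective compactness, I would use Arzelà--Ascoli and verify uniform boundedness and equicontinuity of the family $\mathcal{F} := \{\underline{K}^h \mu : \|\mu\|_\infty \le 1,\ 0<h\le H\}$ in $C(\Omega_0)$. Uniform boundedness follows by bounding, for each fixed grid point $\bxi \in \Omega$, the sum $\sum_{\bf i'}|\omega^{j,h}_{\bf i,i'}|$ by a constant independent of $h$: away from the stencil one controls $h^2 \sum |w_j(\bxi - \bxip)|$ by a Riemann-sum argument against $\int |w_j(\bxi - \by)|\,\dd\by$ (using $|w_j(\bx)| \le w_\ast/|\bx|$, which is integrable in two dimensions on a bounded set), while inside the stencil the correction weights satisfy $|\alpha^j_{\bf i}(h)| \le \alpha_\ast h$ and $|\mathcal{C}|$ is finite; combined with $\|\beta_j\|_\infty \le \beta_\ast$ and the piecewise-linear interpolation (which is a contraction on sup norm), this yields $\|\underline{K}^h \mu\|_\infty \le C\|\mu\|_\infty$.

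Equicontinuity is the step I expect to be most delicate. The piecewise-linear interpolant is Lipschitz at scale $h$ with a constant that must be controlled uniformly. For $\bx,\bx' \in \Omega_0$ with $|\bx - \bx'| < \delta$, I would split into two cases. When $\bx$ and $\bx'$ lie in the same triangular interpolation cell, linearity together with the bound on the vertex values gives $|\underline{K}^h\mu(\bx) - \underline{K}^h\mu(\bx')| \le C\delta$ via discrete gradients that can be estimated, using smoothness of $\beta_j$ and the bound $|\nabla w_j(\bx)| \le w_d/|\bx|^2$, by a Riemann-sum approximation to $\int (|\nabla \beta_j(\bxi)||w_j| + |\beta_j||\nabla w_j|)(\bxi - \by)\,\dd\by$; the second integrand is only borderline integrable, but one uses the standard trick of excising a ball of radius comparable to $\delta$ and bounding the interior contribution by a Dini modulus, yielding a bound independent of $h$ and of $\mu$ in the unit ball of $C(\Omega_0)$. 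When $\bx$ and $\bx'$ lie in different cells, one passes through vertices and reduces to comparing values at neighboring grid points, for which a similar argument applies. The compact support of the $\beta_j$ inside $\Omega$ and vanishing at $\partial \Omega_0$ handles boundary behavior.

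For pointwise convergence $\underline{K}^h \mu \to K\mu$, I would first treat $\mu \in C_c^\infty(\Omega_0)$: for such $\mu$, the quadrature accuracy estimate~\cref{eq:quadacc} gives convergence at each grid point $\bxi \in \Omega$, and the piecewise-linear interpolant together with equicontinuity extends this uniformly to all of $\Omega_0$. For general $\mu \in C(\Omega_0)$, one uses density of smooth functions plus the uniform bound $\|\underline{K}^h\| \le C$ established above: a standard $3\epsilon$ argument gives $\|\underline{K}^h \mu - K\mu\|_\infty \to 0$. With uniform boundedness, equicontinuity, and pointwise convergence in hand, \cref{lem:kressmain} applies and yields both the existence of $(I+\underline{K}^h)^{-1}$ for all sufficiently small $h$ and the claimed uniform bound on its operator norm.
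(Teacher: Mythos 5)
Your proposal does not prove the statement in question; it proves something else. The statement here is the abstract approximation theorem of collectively compact operator theory (Kress, Thm.~10.12 and Cor.~10.11): for \emph{any} Banach space $X$, any compact $A$ with $I+A$ injective, and \emph{any} collectively compact, pointwise convergent family $\{A^h\}$, the inverses $(I+A^h)^{-1}$ exist and are uniformly bounded for small $h$. Your argument instead takes this result as given (``verify the two hypotheses of \cref{lem:kressmain} applied with $X=C(\Omega_0)$, $A=K$, $A^h=\underline{K}^h$'') and then checks collective compactness, uniform boundedness, equicontinuity, and pointwise convergence for the specific discretized operators of this paper. That is the content of \cref{thm:semidiscinvertible} together with \cref{lem:khbounded,lem:khequicontinuous,lem:khcc,lem:khpw} in the appendix --- not of \cref{lem:kressmain} --- and as a proof of \cref{lem:kressmain} it is circular, since it invokes the very lemma it is meant to establish. (The paper itself does not reprove the lemma; it cites it, since it is a classical consequence of Riesz theory and the uniform boundedness principle.)

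To actually prove the lemma you would need the following ingredients, none of which appear in your proposal. First, Riesz--Fredholm theory for the compact operator $A$ shows that injectivity of $I+A$ gives a bounded inverse $S:=(I+A)^{-1}$. Second, pointwise convergence plus the Banach--Steinhaus theorem gives $\sup_h\|A^h\|<\infty$; collective compactness then implies that the set $U=\overline{\bigcup_h\{A^h\mu:\|\mu\|\le 1\}}$ is compact, and since the uniformly bounded family $A-A^h$ converges pointwise to zero, it converges uniformly on $U$, whence $\|(A-A^h)A^h\|\to 0$ as $h\to 0$. Third, one uses the operator identity $(I-SA^h)(I+A^h)=I+S(A-A^h)A^h$: for $h$ small enough that $\|S\|\,\|(A-A^h)A^h\|<\tfrac12$, the right-hand side is invertible by a Neumann series with inverse bounded by $2$, so $I+A^h$ is injective; since each $A^h$ is compact (each maps the unit ball into the relatively compact set $U$), Riesz theory upgrades injectivity to invertibility, and
\begin{equation*}
(I+A^h)^{-1}=\bigl[I+S(A-A^h)A^h\bigr]^{-1}(I-SA^h),\qquad
\|(I+A^h)^{-1}\|\le 2\bigl(1+\|S\|\sup_h\|A^h\|\bigr),
\end{equation*}
which is the claimed uniform bound. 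Your verification work is not wasted --- it is essentially the (separately proved) application of the lemma in \cref{thm:semidiscinvertible} --- but it does not address the statement you were asked to prove.
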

To apply this result, we need to establish the compactness of
the original operator $K$, the collective compactness of the family 
of finite-rank operators $\{ \underline{K}^h\}$, and the pointwise
convergence of these same operators. That $K$ is compact follows immediately from standard results, see \cite[Thm. 2.22]{kress} for example, together with the behavior of the kernels near the diagonal.


In order to establish the collective compactness of the family $\{ \underline{K}^h\}$, we require the following two lemmas, whose proof we defer to the end of the appendix.

\begin{lemma}
\label{lem:khbounded}
    With $L_{\bf i}^h$ as defined above, if $h<d$ then there exists a constant $C_1$, independent of $h,$ such that
    $$\sum_{\bf i'} |L_{\bf i'}^h (\bx)| \le C_1$$
    for all $x \in \Omega_0.$ In particular,
    $$\sup_{0<h<d} \|\underline{K}^h\|_\infty < C_1.$$
\end{lemma}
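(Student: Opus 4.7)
The plan is to bound $\sum_{\bf i'} |L_{\bf i'}^h(\bxi)|$ first at a grid node $\bxi$, and then transfer the bound to arbitrary $\bx \in \Omega_0$ via the fact that the piecewise-linear interpolant expresses $L_{\bf i'}^h(\bx)$ as a convex combination of its values at nearby grid vertices. The sup-norm estimate then follows immediately from the pointwise inequality $|\underline{K}^h[\mu](\bx)| \le \|\mu\|_\infty \sum_{\bf i'} |L_{\bf i'}^h(\bx)|$.

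At a grid node the definition gives
$$ L_{\bf i'}^h(\bxi) \;=\; \sum_{j=1}^m \beta_j(\bxi)\,\omega_{{\bf i},{\bf i'}}^{j,h}, $$
so it suffices to bound $\max_j \sum_{\bf i'} |\omega_{{\bf i},{\bf i'}}^{j,h}|$ uniformly in $\bf i$ and $h$. I would split this sum into a near-field piece, where ${\bf i}-{\bf i'} \in \mathcal{C}$, and a far-field piece. The near-field piece has the fixed number $|\mathcal{C}|$ of terms, each of modulus at most $\alpha_* h$, and so contributes at most $|\mathcal{C}|\alpha_* h \le |\mathcal{C}|\alpha_* d$ when $h<d$. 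For the far-field piece, the bound $|w_j(\bx)| \le w_*/|\bx|$ gives
$$ \sum_{{\bf i}-{\bf i'}\notin \mathcal{C}} \bigl|w_j\bigl(h({\bf i}-{\bf i'})\bigr)\bigr|\, h^2 \;\le\; w_*\, h \sum_{0 < |{\bf k}| \le 2\sqrt{2}\,N} \frac{1}{|{\bf k}|}, $$
where ${\bf k} = {\bf i}-{\bf i'}$ ranges over nonzero lattice points with $|{\bf k}|$ of order $N$, since ${\bf i},{\bf i'} \in \{-N,\dots,N\}^2$. A standard integral-comparison argument grouping lattice points into unit annuli yields $\sum_{0<|{\bf k}|\le R} 1/|{\bf k}| = O(R)$, so the far-field contribution is $O(hN) = O(L)$, uniformly in $h$. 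Multiplying by $m\beta_*$ closes the grid-node estimate.

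For arbitrary $\bx \in \Omega_0$, the piecewise-linear interpolant writes $L_{\bf i'}^h(\bx) = \sum_v \lambda_v(\bx)\, L_{\bf i'}^h(v)$ as a convex combination over the three vertices $v$ of the containing triangle, with the convention that $L_{\bf i'}^h(v) = 0$ at any vertex on $\partial\Omega_0$. Since $\lambda_v(\bx) \ge 0$ and $\sum_v \lambda_v(\bx) = 1$, swapping the two summations and applying the triangle inequality gives
$$ \sum_{\bf i'} |L_{\bf i'}^h(\bx)| \;\le\; \max_v \sum_{\bf i'} |L_{\bf i'}^h(v)|, $$
which is controlled by the grid-node estimate. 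The only substantive step is the far-field Riemann-sum bound; it rests on the two-dimensional integrability of $1/|\bx|$ at the origin together with the constraint $hN \le L$, and I do not anticipate any subtlety beyond careful bookkeeping.
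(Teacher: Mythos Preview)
Your proposal is correct and follows essentially the same route as the paper: split the grid-node sum into the stencil correction terms (bounded by $|\mathcal{C}|\alpha_* h$) and the far-field trapezoidal terms (bounded via $|w_j|\le w_*/|\bx|$ and an integral comparison giving $O(hN)=O(L)$), then extend to arbitrary $\bx$ by the piecewise-linear interpolation. The paper packages the integral comparison into a separate lemma and says ``by linearity'' where you spell out the convex-combination argument, but the content is identical.
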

\begin{lemma}
\label{lem:khequicontinuous}
    With $L_{\bf i}^h$ defined as above, then for each $\eta \in (0,1),$ there exists a constant $C_2$, depending on $\eta$ but not on $h$, such that
    $$\sum_{\bf i'} |L_{\bf i'}^h(\bx) - L_{\bf i'}^h(\by)| \le C_2 \|\bx-\by\|^{\eta}$$
    for all $0<h<\max\{d,1\}$ and all $x,y\in \Omega_0.$ In particular, for any 
    bounded set $U\subset C(\Omega_0)$, the set 
    $$ \{\underline{K}^h \mu : \mu \in U \textrm{ and } 0<h<d \}$$
    is equicontinuous.
\end{lemma}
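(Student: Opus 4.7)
The plan is to reduce the claim to a H\"older estimate at the grid nodes and then extend to arbitrary $\bx,\by \in \Omega_0$ via the piecewise-linear structure of $L_{\mathbf{i}'}^h$. Fix two distinct grid nodes $\bxi$ and $\bxj$ with $r := \|\bxi - \bxj\| \geq h$. Writing $L_{\mathbf{i}'}^h(\bxi) = \sum_{k=1}^m \beta_k(\bxi) \, \omega_{\mathbf{i},\mathbf{i}'}^{k,h}$, I split
$$L_{\mathbf{i}'}^h(\bxi) - L_{\mathbf{i}'}^h(\bxj) = \sum_k [\beta_k(\bxi) - \beta_k(\bxj)]\, \omega_{\mathbf{i},\mathbf{i}'}^{k,h} + \sum_k \beta_k(\bxj) \bigl[\omega_{\mathbf{i},\mathbf{i}'}^{k,h} - \omega_{\mathbf{j},\mathbf{i}'}^{k,h}\bigr].$$
The first piece is controlled using $|\beta_k(\bxi) - \beta_k(\bxj)| \leq \beta_d r$ together with the uniform sum bound $\sum_{\mathbf{i}'}|\omega_{\mathbf{i},\mathbf{i}'}^{k,h}| \lesssim 1$, which follows from the decay $|w_k(\bx)| \leq w_\ast/|\bx|$, the correction bound $\alpha_\ast h$ on the stencil $\mathcal{C}$, and the 2D lattice estimate $\sum_{1 \leq |\mathbf{n}| \leq L/h} 1/|\mathbf{n}| \lesssim L/h$ (mirroring the proof of \cref{lem:khbounded}).

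For the second piece I split the sum over $\mathbf{i}'$ into a near region ($|\bxi - \bxip| \leq 2r$ or $|\bxj - \bxip| \leq 2r$) and a far region. In the near region I estimate each weight individually using $|\omega_{\mathbf{i},\mathbf{i}'}^{k,h}| \leq w_\ast h/|\mathbf{i}-\mathbf{i}'|$ (or $\alpha_\ast h$ if $\mathbf{i}-\mathbf{i}' \in \mathcal{C}$) and sum via $\sum_{1 \leq |\mathbf{n}| \leq 2r/h} 1/|\mathbf{n}| \lesssim r/h$, yielding an $O(r)$ contribution. In the far region, the mean value theorem combined with the gradient bound $|\nabla w_k| \leq w_d/|\bx|^2$ gives $|\omega_{\mathbf{i},\mathbf{i}'}^{k,h} - \omega_{\mathbf{j},\mathbf{i}'}^{k,h}| \leq C w_d r h^2 / \min(|\bxi-\bxip|,|\bxj-\bxip|)^2$; summing over the far region using $\sum_{2r/h \leq |\mathbf{n}| \leq L/h} 1/|\mathbf{n}|^2 \lesssim \log(L/r)$ produces an $O(r \log(L/r))$ contribution. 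Combining the pieces,
$$\sum_{\mathbf{i}'} |L_{\mathbf{i}'}^h(\bxi) - L_{\mathbf{i}'}^h(\bxj)| \lesssim r + r \log(L/r) \lesssim r^\eta = \|\bxi - \bxj\|^\eta,$$
where the last inequality uses that $r^{1-\eta}\log(L/r)$ is bounded on $(0,L]$ with a constant depending only on $\eta$ and $L$.

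Next I extend the bound to arbitrary $\bx,\by \in \Omega_0$ using the piecewise-linear structure. If $\|\bx - \by\| \leq h$, both points lie in at most two adjacent triangles. By the barycentric formula,
$$|\underline{K}^h[\mu](\bx) - \underline{K}^h[\mu](\by)| \lesssim \frac{\|\bx-\by\|}{h} \cdot \max_{v_1,v_2 \text{ adjacent}} |\underline{K}^h[\mu](v_1) - \underline{K}^h[\mu](v_2)|,$$
and applying the grid-point estimate with $r=h$ bounds the right-hand side by $(\|\bx-\by\|/h) \cdot h^\eta \|\mu\|_\infty \leq \|\bx-\by\|^\eta \|\mu\|_\infty$, since $(\|\bx-\by\|/h)^{1-\eta}\leq 1$. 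If $\|\bx - \by\| > h$, I pick the nearest grid points $\bxi,\bxj$ to $\bx,\by$ (each at distance $\leq h$) and use the triangle inequality along $\bx \to \bxi \to \bxj \to \by$; the middle leg is controlled by the grid-point estimate with $\|\bxi - \bxj\| \leq 3\|\bx-\by\|$, and the two short legs by the previous case. Taking the supremum over $\mu$ with $\|\mu\|_\infty \leq 1$ (or equivalently choosing $\mu(\bxip)$ to be the sign of $L_{\mathbf{i}'}^h(\bx) - L_{\mathbf{i}'}^h(\by)$ at each node) converts the operator estimate back into the sum bound in the statement. Equicontinuity of $\{\underline{K}^h\mu : \mu \in U\}$ is immediate since the constant depends on $\mu$ only through $\|\mu\|_\infty$.

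The main obstacle is the far-region estimate: one must handle the mean value theorem carefully to avoid blow-up near the singularity (using $|\bx^\ast| \geq \tfrac{1}{2}\min(|\bxi-\bxip|,|\bxj-\bxip|)$ when $\min \geq 2r$), and the near/far partition must simultaneously absorb the quadrature corrections on the stencil $\mathcal{C}$ and the non-integrable tail of $1/|\mathbf{n}|$. Once the logarithmic grid-point bound is in hand, the standard device $r \log(L/r) \lesssim_\eta r^\eta$ delivers the H\"older exponent for any $\eta \in (0,1)$, and the piecewise-linear extension is routine.
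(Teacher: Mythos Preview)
Your argument is correct and follows a route that is close in spirit to the paper's but organized differently. The paper first splits $L_{\mathbf{i}'}^h = T_{\mathbf{i}'}^h + S_{\mathbf{i}'}^h$, where $T$ carries the raw trapezoidal weights $\beta_j(\cdot)w_j(\cdot)h^2$ and $S$ carries the stencil corrections $\tilde{\alpha}^j$; the correction piece $S$ is then handled by the Lipschitz bound $Q/h$ on the barycentric coefficients together with $|\tilde{\alpha}^j|\lesssim h$, while $T$ is handled by a near/far split with the more elaborate cutoff $h^{\eta-1}|\boldsymbol{\ell}-\boldsymbol{\ell}'|^{\eta}+3|\boldsymbol{\ell}-\boldsymbol{\ell}'|$. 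By contrast, you split the \emph{difference} as $(\Delta\beta)\,\omega + \beta\,(\Delta\omega)$ and use the simpler cutoff $2r$; this is cleaner and leads directly to the transparent bound $r+r\log(L/r)\lesssim_\eta r^\eta$, whereas the paper's outer-region bookkeeping is more delicate (and, as written, drops a factor of $|\boldsymbol{\ell}-\boldsymbol{\ell}'|$ that has to be recovered). Your extension to arbitrary $\bx,\by$ via the detour through $\underline{K}^h[\mu]$ and back (choosing $\mu(\bxip)$ to be unimodular signs) is equivalent to the paper's direct argument that treats $\{L_{\mathbf{i}'}^h(\cdot)\}$ as an $\ell^1$-valued piecewise-linear function with Lipschitz constant $\lesssim h^{\eta-1}$.

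Two small points to tighten. First, the cutoff $2r$ only absorbs the correction stencils $\mathcal{C}^h(\bxi)$ and $\mathcal{C}^h(\bxj)$ when $r$ exceeds $(\max_{\mathbf{n}\in\mathcal{C}}|\mathbf{n}|)\,h/2$; either enlarge the near region to $Mr$ with $M=\operatorname{diam}\mathcal{C}$, or simply observe that the at most $2|\mathcal{C}|$ exceptional weights each contribute $O(h)\le O(r)$. Second, ``both points lie in at most two adjacent triangles'' is not literally true for $|\bx-\by|\le h$; what you need (and what the paper uses) is that the global Lipschitz constant of the piecewise-linear interpolant is the maximum of the per-triangle slopes, hence $\lesssim h^{\eta-1}$, which immediately gives $|\bx-\by|\,h^{\eta-1}\le |\bx-\by|^\eta$ for $|\bx-\by|\le h$.
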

From the two previous lemmas, the collective compactness follows immediately.
\begin{lemma}
    \label{lem:khcc}
    In the notation above, the operators $\{\underline{K}^h\}$, for $0< h < d$,
    are collectively compact. 
\end{lemma}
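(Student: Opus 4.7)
The plan is to deduce collective compactness directly from \cref{lem:khbounded,lem:khequicontinuous} via the Arzel\`a--Ascoli theorem. Recall that a family $\mathcal{F}\subset C(\Omega_0)$ is relatively compact if and only if it is uniformly bounded and equicontinuous, and that collective compactness of $\{\underline{K}^h\}$ means that for every bounded set $U\subset C(\Omega_0)$, the set
\[
\mathcal{F}_U := \{\, \underline{K}^h \mu : \mu \in U,\ 0 < h < d\,\}
\]
is relatively compact in $C(\Omega_0)$. So I fix such a $U$ and set $M := \sup_{\mu \in U}\|\mu\|_\infty < \infty$.

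First I would verify uniform boundedness. By the definition of $\underline{K}^h\mu$ as the piecewise linear interpolant of its grid values and the triangle inequality, for every $\bx \in \Omega_0$
\[
|\underline{K}^h\mu(\bx)| \;\leq\; \sum_{\bf i'} |L_{\bf i'}^h(\bx)|\,\|\mu\|_\infty \;\leq\; C_1 M,
\]
by \cref{lem:khbounded}, where the bound is independent of $\mu \in U$ and of $h \in (0,d)$.

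Next I would verify equicontinuity. For any $\bx,\by\in\Omega_0$, applying \cref{lem:khequicontinuous} with a fixed $\eta\in(0,1)$,
\[
|\underline{K}^h\mu(\bx) - \underline{K}^h\mu(\by)| \;\leq\; \sum_{\bf i'} |L_{\bf i'}^h(\bx) - L_{\bf i'}^h(\by)|\,\|\mu\|_\infty \;\leq\; C_2 M\,\|\bx-\by\|^{\eta},
\]
again uniformly in $\mu\in U$ and $h \in (0,d)$. Given $\varepsilon>0$, choosing $\delta = (\varepsilon/(C_2 M))^{1/\eta}$ gives a modulus of continuity valid for every element of $\mathcal{F}_U$. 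Since $\Omega_0$ is compact and $\mathcal{F}_U$ is a uniformly bounded and equicontinuous family of continuous functions on $\Omega_0$, Arzel\`a--Ascoli implies that $\mathcal{F}_U$ is relatively compact in $C(\Omega_0)$, which is the statement of collective compactness.

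There is no real obstacle here: the two preceding lemmas have been arranged precisely so that the present result falls out of Arzel\`a--Ascoli. The only mild care needed is to remember that $\underline{K}^h\mu$ is defined on all of $\Omega_0$ as the piecewise linear interpolant of the grid values (with zero boundary values on $\partial \Omega_0$), so the bounds involving $L_{\bf i'}^h(\bx)$ directly control $\underline{K}^h\mu$ pointwise on $\Omega_0$ rather than only at the nodes.
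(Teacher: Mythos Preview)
Your proposal is correct and follows essentially the same route as the paper: invoke Arzel\`a--Ascoli, using \cref{lem:khbounded} for uniform boundedness and \cref{lem:khequicontinuous} for equicontinuity of $\{\underline{K}^h\mu\}$ over bounded $U$ and $0<h<d$. Your write-up simply makes explicit the pointwise inequalities that the paper leaves implicit.
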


\begin{proof}
By the Arzel\`{a}-Ascoli theorem, collective compactness
is equivalent to the boundedness 
and equicontinuity of sets of the form $\{\underline{K}^h \mu \}$
where the $\mu$ come from a bounded subset of $C(\Omega_0)$ and
$0<h<d$. These are established in
\cref{lem:khbounded,lem:khequicontinuous}, respectively, in
\cref{app:cc}. 
\end{proof}

We also require the following Lemma, which gives the pointwise convergence of the operators $\underline{K}^h$ to $K.$
\begin{lemma}
\label{lem:khpw}
    For any $\sigma \in C(\Omega_0),$ 
    $$\lim_{h\to 0^+}\sup_{\bx\in\Omega_0} \left| \underline{K}^h[\mu](\bx)-K[\mu](\bx)\right| = 0.$$
\end{lemma}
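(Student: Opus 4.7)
The plan is to combine the density of smooth functions in $C(\Omega_0)$ with the quadrature accuracy estimate \eqref{eq:quadacc} and the uniform stability bound from \cref{lem:khbounded} via a standard perturbation argument. In essence, the smooth approximant is handled by the quadrature accuracy, and the perturbation between $\mu$ and its smoothing is absorbed by the uniform bound $\|\underline{K}^h\|_\infty \leq C_1$.

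First I would establish convergence at the grid nodes that is uniform in $\mathbf{i}$. Given $\varepsilon>0$ and $\mu \in C(\Omega_0)$, I would select $\tilde\mu \in C^\infty(\Omega_0)$ (e.g., via mollification of a continuous extension of $\mu$) with $\|\mu-\tilde\mu\|_\infty < \varepsilon$, then apply the triangle inequality
\begin{equation*}
\underline{K}^h[\mu](\bxi) - K[\mu](\bxi) = \underline{K}^h[\mu-\tilde\mu](\bxi) + \bigl(\underline{K}^h[\tilde\mu](\bxi) - K[\tilde\mu](\bxi)\bigr) + K[\tilde\mu-\mu](\bxi).
\end{equation*}
The first term is bounded by $C_1\varepsilon$ using \cref{lem:khbounded}. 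The third term is bounded by $\|K\|\varepsilon$, where $\|K\|<\infty$ since $K$ is a bounded (indeed compact) operator on $C(\Omega_0)$ for weakly singular kernels in two dimensions. For the middle term, applying \eqref{eq:quadacc} to each smooth kernel $w_j$ with test function $\tilde\mu$, multiplying by $|\beta_j(\bxi)|\leq \beta_\ast$, and summing over $j$ yields a bound $m\beta_\ast A(\tilde\mu)\,h^p \to 0$. For $\bxi \notin \Omega$ both terms vanish since $\beta_j(\bxi)=0$, so no error is made there. Since $\varepsilon$ is arbitrary, this gives $\sup_{\mathbf{i}}|\underline{K}^h[\mu](\bxi)-K[\mu](\bxi)|\to 0$.

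Second, I would lift the nodal convergence to the full domain $\Omega_0$ via the piecewise linear interpolant. For any $\bx \in \Omega_0$ lying in a triangle with vertices $\mathbf{x}^h_{\mathbf{i}_1},\mathbf{x}^h_{\mathbf{i}_2},\mathbf{x}^h_{\mathbf{i}_3}$ (or on $\partial\Omega_0$, where both functions vanish once $h<d$ because $\operatorname{supp}(\beta_j)\subset\Omega$ lies strictly inside $\Omega_0$), one has $\underline{K}^h[\mu](\bx)=\sum_k \lambda_k \underline{K}^h[\mu](\mathbf{x}^h_{\mathbf{i}_k})$ with $\lambda_k\geq 0,\sum_k\lambda_k=1$. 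Inserting $\sum_k \lambda_k K[\mu](\mathbf{x}^h_{\mathbf{i}_k})$ splits the error into a grid-convergence piece controlled by the previous step and an interpolation piece
\begin{equation*}
\sum_k \lambda_k \bigl(K[\mu](\mathbf{x}^h_{\mathbf{i}_k}) - K[\mu](\bx)\bigr),
\end{equation*}
bounded by the modulus of continuity of $K[\mu]$ at scale $O(h)$. Since $K[\mu]$ is continuous on the compact set $\Omega_0$, hence uniformly continuous, this piece vanishes uniformly as $h\to 0$.

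The main obstacle is the tension between the two pieces of the density argument: the quadrature estimate \eqref{eq:quadacc} only applies to smooth integrands, while $\mu$ is merely continuous. The uniform stability bound furnished by \cref{lem:khbounded} is exactly what is needed to absorb the $C^0$ perturbation $\mu-\tilde\mu$ uniformly in $h$; without it, taking $h\to 0$ after approximation could lose control. A secondary point, continuity of $K[\mu]$ on $\Omega_0$, is routine and follows by splitting the defining integral into a small disc around $\bx$ (where the $|\bx-\by|^{-1}$ kernel is integrable in two dimensions) and its complement (where the kernel is uniformly continuous in $\bx$).
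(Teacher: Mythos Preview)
Your proposal is correct and follows essentially the same strategy as the paper's proof: reduce to smooth $\mu$ via density together with the uniform bounds $\|\underline{K}^h\|_\infty\le C_1$ and $\|K\|_\infty<\infty$, apply the quadrature estimate \eqref{eq:quadacc} at grid points in $\Omega$ (both sides vanishing outside $\Omega$), and then pass from grid points to all of $\Omega_0$ using the piecewise-linear structure of $\underline{K}^h$ and the uniform continuity of $K[\mu]$. The paper's proof is a one-paragraph sketch of exactly these ingredients, whereas you have written out the triangle-inequality decomposition and the interpolation step in full; there is no substantive difference.
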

\begin{proof}
It suffices to show that the result holds for smooth functions. The result then follows by combining \cref{eq:quadacc} for $\bx \in \Omega$, the fact that both $\underline K^{h}[\mu]$ and $K[\mu]$ are identitcally $0$ for $\bx \in \Omega_{0} \setminus \Omega$, the uniform continuity of $K[\mu]$, and the piecewise linear nature of the $\underline{K}^h,$ together with the existence of a bound on $\sup_{0<h<d} \|\underline{K}^h\|_\infty,\|K\|_\infty.$
\end{proof}

We now proceed with the proof of \cref{thm:semidiscinvertible}. 

\begin{proof}[Proof of \cref{thm:semidiscinvertible}, and \cref{cor:fulldisc}]
    Applying \cref{lem:khcc,lem:khpw}, we have that 
    the family $\{\underline{K}^h\}$ and the operator $K$ satsify
    the conditions of \cref{lem:kressmain} on the space $C(\Omega_0)$,
    establishing \cref{thm:semidiscinvertible}.

    To prove \cref{cor:fulldisc}, we make the observation
    that the semi-discrete operators $I+\underline{K}^h$ map 
    piecewise linear interpolants over the grid points to
    piecewise linear interpolants over the grid points.
    Let $h$ be sufficiently small that the operators $I+\underline{K}^h$
    are invertible and the norms $\|(I+\underline{K}^h)^{-1}\|_{C(\Omega_0)}$ satisfy
    a uniform bound. 
    Suppose, by contradiction, that the discrete operator $\mathbf{I}+\mathbf{K}^h$
    has a non-zero null vector. Then, the piecewise linear interpolant of 
    the values specified by this null vector gets mapped to the zero 
    function by $I+\underline{K}^h$, a contradiction. Likewise,
    the correspondence of the $\ell^\infty$ norm of the interpolation
    values and the $C(\Omega_0)$ norm of the interpolant
    for piecewise linear interpolants over the grid points implies
    that $\| (\bf{I}+\bf{K}^h)^{-1}\|_\infty$ is bounded uniformly for
    sufficiently small $h$. 
\end{proof}
\begin{proof}[Proof of \cref{thm:convergence}]
    \Cref{cor:smoothness} along with the assumptions imply that $\mu \in C_{c}^{\infty}(\Omega_{0})$.  
    Applying the estimate~\cref{eq:quadacc} to $\mu$, we get that $\sup_{\bf i}|K_{h}[\mu](\bx_{i}) - K[\mu](\bx_{i})| \leq C h^{p}$ for some constant C and $h$ sufficiently small. Letting $\delta_{\bf i} = \mu(\bxi) - \mu^{h}(\bxi)$, we note that the difference satisfies the linear system 
$(\mathbf{I}+\mathbf{K}^h)\boldsymbol{\delta} = \boldsymbol{g}$, where
    $g_{\bf i} = (K_{h} - K)[\mu](\bx_{i})$, and $\boldsymbol{\delta}$ and $\boldsymbol{g}$ are the vectors with components $\delta_{\bf i}$, and $g_{\bf i}$. The result then follows from~\cref{cor:fulldisc}.
\end{proof}
We conclude with the proofs of the two lemmas,  \cref{lem:khbounded} and \cref{lem:khequicontinuous}. In the following we use $C$, and its various decorations, to denote arbitrary  constants independent of $h$, with values that can change from line to line. In our analysis we will make frequent use of the following bounds. The proof is straightforward and omitted.
\begin{lemma}\label{lem:app_bnd}
Let $p\in \mathbb{R}_+$ and $M\in \mathbb{Z}_+,$ be given. Then there exist constants $c>0$ and $C<\infty$, depending on $p$ but independent of $M$, such that
$$c\int_{1/2}^{M} \frac{1}{r^{p-1}}\,{\rm d}r\le \sum_{1\le\,\|{\bf i}\|_\infty\le M} \frac{1}{|{\bf i}|^p} \le C\int_{1/2}^{M} \frac{1}{r^{p-1}}\,{\rm d}r.$$
Moreover, there exists a constant $\bar{C}$ such that for any $1 \le \rho_1 < \rho_2<\infty,$
$$\sum_{\rho_1 \le |{\bf i}| \le \rho_2}\frac{1}{|{\bf i}|^p} \le \bar{C} \int_{\rho_1-\frac{1}{\sqrt{2}}}^{\rho_2+\frac{1}{\sqrt{2}}} \frac{1}{r^{p-1}}\,{\rm d}r. $$
\end{lemma}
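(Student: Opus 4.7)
The plan is to compare each lattice sum with an integral of $|\mathbf{r}|^{-p}$ using the standard unit-square partition $Q_{\mathbf{i}} := \mathbf{i} + [-\tfrac{1}{2},\tfrac{1}{2}]^2$ for $\mathbf{i} \in \mathbb{Z}^2$. The key geometric observation I would use is that for $\mathbf{r} \in Q_{\mathbf{i}}$ one has $\bigl| |\mathbf{r}| - |\mathbf{i}| \bigr| \leq \tfrac{1}{\sqrt{2}}$; whenever $\|\mathbf{i}\|_\infty \geq 1$, so that $|\mathbf{i}| \geq 1$, this yields $(1 - \tfrac{1}{\sqrt{2}}) |\mathbf{i}| \leq |\mathbf{r}| \leq (1 + \tfrac{1}{\sqrt{2}}) |\mathbf{i}|$, and therefore $|\mathbf{r}|^{-p}$ and $|\mathbf{i}|^{-p}$ are comparable up to a constant depending only on $p$.

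For the first inequality I would write $|\mathbf{i}|^{-p} = \int_{Q_{\mathbf{i}}} |\mathbf{i}|^{-p} \, d\mathbf{r}$ and, using the observation above, replace the integrand by $|\mathbf{r}|^{-p}$ at the cost of a multiplicative constant (one for the upper and one for the lower bound). Summing over $1 \leq \|\mathbf{i}\|_\infty \leq M$, the cells $Q_{\mathbf{i}}$ tile the square annulus $\Omega_M := [-(M+\tfrac{1}{2}), M+\tfrac{1}{2}]^2 \setminus [-\tfrac{1}{2},\tfrac{1}{2}]^2$, so the sum is comparable to $\int_{\Omega_M} |\mathbf{r}|^{-p} \, d\mathbf{r}$. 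I would then sandwich $\Omega_M$ between two circular annuli, one inscribed and one circumscribed, whose inner and outer radii are constant multiples of $\tfrac{1}{2}$ and $M$ respectively. Converting to polar coordinates via $\int_{r_1}^{r_2} \int_0^{2\pi} r^{1-p} \, d\theta \, dr = 2\pi \int_{r_1}^{r_2} r^{1-p} \, dr$ and absorbing $2\pi$ together with the geometric constants into $c$ and $C$ yields both halves of the two-sided bound in the stated form.

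The second inequality follows by the same argument restricted to the annular index set $\{\mathbf{i} : \rho_1 \leq |\mathbf{i}| \leq \rho_2\}$: for any $\mathbf{r} \in Q_{\mathbf{i}}$ with such an $\mathbf{i}$, one has $\rho_1 - \tfrac{1}{\sqrt{2}} \leq |\mathbf{r}| \leq \rho_2 + \tfrac{1}{\sqrt{2}}$, so the union of the relevant cells lies inside the circular annulus with these radii, and the rest of the computation is identical after a polar change of variables. The only mildly delicate point I anticipate is keeping the comparability constant uniform at lattice points near $\|\mathbf{i}\|_\infty = 1$, where the relative deviation of $|\mathbf{r}|$ from $|\mathbf{i}|$ is largest; this is handled cleanly by the explicit lower bound $|\mathbf{r}| \geq (1 - 1/\sqrt{2}) |\mathbf{i}|$, which degrades the constants but never makes them vanish. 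Beyond this minor bookkeeping, the whole argument is a routine sum-to-integral comparison, consistent with the author's remark that the proof is straightforward.
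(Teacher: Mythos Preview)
The paper omits the proof of this lemma entirely, stating only that it is ``straightforward and omitted.'' Your unit-cell comparison is a standard and correct way to carry it out: the bound $\bigl||\mathbf{r}|-|\mathbf{i}|\bigr|\le 1/\sqrt{2}$ on $Q_{\mathbf{i}}$ gives the two-sided comparability of $|\mathbf{i}|^{-p}$ and $|\mathbf{r}|^{-p}$ for $\|\mathbf{i}\|_\infty\ge 1$, the tiling reduces the sum to an integral over a square annulus, and the polar-coordinate sandwich finishes the job. The second estimate follows exactly as you describe, and your handling of the borderline cells near $\|\mathbf{i}\|_\infty=1$ is the right one. There is nothing to compare against in the paper, and no gap in your argument.
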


\begin{proof}[Proof of \cref{lem:khbounded}]
    We first prove the inequality for $\bx = \bxi,$ the case of general $\bx$ following by linearity. Then,
    $$\sum_{{\bf i-i'} \notin \mathcal{C}} |L_{\bf i'}^h (\bx)| = \sum_{j=1}^m\sum_{{\bf i-i'} \notin \mathcal{C}} |\beta_j(\bxi)| \,|w_j(h({\bf i - \bf i'}))| h^2 \le \beta_* m\sum_{{\bf i-i'} \notin \mathcal{C}} w_* h^2 \frac{1}{h|{\bf i}-{\bf i}'|}. $$
    Using the previous lemma, it follows that
    \begin{align}\label{eq:app_bnd1}\sum_{{\bf i-i'} \notin \mathcal{C}} |L_{\bf i'}^h (\bx)| \le \tilde{C}\, {\rm diam}\, \Omega_0.
    \end{align}
    A bound on the remaining portion, the sum over ${\bf i-\bf i'} \in \mathcal{C}$, can be obtained from the bounds on the $\alpha_{\bf i}^h,$
    \begin{align}\label{eq:app_bnd2}\sum_{{\bf i -\bf i'} \in \mathcal{C}} |L_{\bf i'}^h(\bxi)| \le |\mathcal{C}| \beta_* m \alpha_* h.
    \end{align}
    Combining (\ref{eq:app_bnd1}) and (\ref{eq:app_bnd2}) yields the desired result.
\end{proof}

For ease of exposition, let $\mathcal{C}^{h}(\bx)$ denote the indices of the grid points with corrections for the point $\bx$ with grid size $h.$ We note that $|\mathcal{C}^h(\bx)| \le 4 |\mathcal{C}|.$ We now prove \cref{lem:khequicontinuous}.

\begin{proof}[Proof of \cref{lem:khequicontinuous}]
From the previous lemma, it suffices to establish this for $\|\bx-\by\| <1.$ We begin by observing that for any grid point $\bxl,$
$$L_{\bf i'}^h(\bxl) = \sum_{j=1}^m \beta_j(\bxl)w_j(h({\bf \ell}-{\bf i}'))h^2 + \sum_{j=1}^m \beta_j(\bxl)\tilde{\alpha}^j_{\bf \ell -i'}(h)=: T_{\bf i'}^h(\bxl) + S_{\bf i'}^h(\bxl)$$
where $\tilde{\alpha}^j_{\bf i} = \alpha^j_{\bf i} -w_j(h{\bf i})h^2$ if ${\bf i} \in \mathcal{C},$ ${\bf i}\neq (0,0)$, $\tilde{\alpha}^j_{(0,0)} = \alpha^j_{(0,0)},$ and $\tilde{\alpha}^j_{\bf i} = 0$ for ${\bf i} \notin \mathcal{C}.$ We note that the assumptions on the kernels $w_j$ guarantee that $|\tilde{\alpha}^j_{\bf i}| \le \tilde{\alpha}_* h$ for some constant $\tilde{\alpha}_*$ independent of $h.$ By linearity, if we use the same interpolation rule for $T_{\bf i'}^h$ and $S_{\bf i'}^h$ as for $L^h,$ we can define $T_{\bf i'}^h,S_{\bf i'}^h \in C(\Omega),$ with $L_{\bf i'}^h(\bx) = T_{\bf i'}^h(\bx) + S_{\bf i'}^h(\bx).$

We first consider $E= \sum_{\bf i'} |S_{\bf i'}^h(\bx)-S_{\bf i'}^h(\by)|$ and note from the definition of $S_{\bf i}^h$ that
$$E =\sum_{{\bf i'}} |\chi_{{\bf i'} \in \mathcal{C}^h(\bx)}S_{\bf i'}^h(\bx) -\chi_{{\bf i'} \in \mathcal{C}^h(\by)} S_{\bf i'}^h(\by)|.$$
For any point $\bx \in \Omega_0$ we observe that by construction there exist coefficients $q_{\bf i}(x)$ such that
$$\chi_{{\bf i'} \in \mathcal{C}^h(\bx)} S_{\bf i'}^h(\bx) = \sum_{\bf i} q_{\bf i}(\bx) \chi_{{\bf i'} \in \mathcal{C}^h(\bxi} S_{\bf i'}^h(\bxi)=\sum_{\bf i} q_{\bf i}(x) \chi_{{\bf i - \bf i'} \in \mathcal{C}} S_{\bf i'}^h(\bxi).$$
Moreover, for all $\bf i$, $q_{\bf i}(\bx)$ is a Lipschitz function of $\bx$ with Lipschitz constant bounded by $Q/h$ for some universal constant $Q.$ Finally, $\sum_{\bf i} |q_{\bf i}(\bx)| = 1,$ and for any $\bx \in \Omega,$ $q_{\bf i}(\bx)$ has at most 3 nonzero entries, thinking of it as a vector indexed by ${\bf i}$. Then,
$$E = \sum_{\bf i} |q_{\bf i}(\bx) - q_{\bf i}(\by)| \sum_{\bf i'} \chi_{{\bf i - \bf i'} \in \mathcal{C}}| S_{\bf i'}^h(x_{\bf i}^h)|.$$
We next observe that for ${\bf i-i'}\in\mathcal{C},$
$$ |S_{\bf i'}^h(\bx_{\bf i}^h)| \le \beta_* \sum_{j=1}^m|\tilde{\alpha}_{\bf i - \bf i'}^j(h)|\le m \beta_*\tilde{\alpha}_* h.$$
Thus,
$$E \le 6Q \tilde{\alpha}_* \beta_* m |\mathcal{C}|\, |\bx-\by|.$$

Turning to $T_{\bf i'}^h,$ we first consider the case in which $\bx$ and $\by$ are grid points $\bxl$ and $\bxlp,$ respectively. Next, we observe that from our assumptions,
$$|\nabla (\beta_j(\bx) w_j)| \le \frac{|\nabla \beta_j(\bx)|w_*}{|\bx|} + \frac{|\beta_j(\bx)|w_d}{|\bx|^2}\le \frac{\tilde{C}}{|\bx|^2}.$$
and, in particular,
$$|T_{\bf i'}^h(\bxl) - T_{\bf i'}^h(\bxlp)| \le {\tilde{C}}h |{\boldsymbol{\ell}  - \boldsymbol{\ell}'}| \sup_{t\in [0,1]}\frac{1}{|t \boldsymbol{\ell}+ (1-t)\boldsymbol{\ell}'-{\bf i'}|^2}.$$
Set $\boldsymbol{\ell}_*$ to be the nearest grid point to $(\boldsymbol{\ell}+\boldsymbol{\ell}')/2.$ Then, 
$$\sum_{|{\bf i'} -\bell_*|>h^{\eta-1}|\bell-\bell'|^\eta + 3 |\bell-\bell'|} |T_{\bf i'}^h(\bxl)-T_{\bf i'}^h(\bxlp)|\le C' h \int_{h^{\eta-1}|\bell- \bell'|^\eta}^{2\,{\rm diam}\,\Omega_0 \,/h} \frac{1}{r}\,{\rm d}r\le C'' h \log(h).$$
where the constants $C'$ and $C''$ are independent of $\bell,\bell',$ and $h.$ In the inner region, $|{\bf i'} -\bell_*|\le h^{\eta-1}|\bell-\bell'|^\eta + 3 |\bell-\bell'|$ we use the bounds on the $w_j$ directly, to obtain
\begin{align*}&\sum_{|{\bf i'} -\bell_*|\le h^{\eta-1}|\bell-\bell'|^\eta + 3 |\bell-\bell'|}|T_{\bf i'}^h(\bx)-T_{\bf i'}^h(\by)| \le \sum_{|{\bf i'} -\bell_*|\le h^{\eta-1}|\ell-\bell'|^\eta + 3 |\bell-\bell'|}|T_{\bf i'}^h(\bx)|+|T_{\bf i'}^h(\by)|\\
& \quad \le 2 C'''' h\int_0^{h^{\eta-1}|\bell- \bell'|+ 5 |\bell - \bell'|} 1\,{\rm d}r\\
&\quad \le C''''' h^{\eta}|\bell-\bell'|^\eta = C''''' |\bxl - \bxlp|^\eta.
\end{align*}
Combining this with the previous estimate for the outer region gives
$$\sum_{\bf i'} |T_{\bf i'}^h(\bxl)-T_{\bf i'}^h(\bxlp)|\le C |\bxl-\bxlp|^\eta$$
for some constant $C$ independent of $h,\bell,$ and $\bell'.$

We now turn to the case of arbitrary $\bx$ and $\by$. From the above estimate, the difference between the sum at neighboring grid points is bounded by $C h^\eta$, and hence, for $\bx$ and $\by$ in the same triangle,
$$ \sum_{\bf i'} |T_{\bf i'}^h(\bx)-T_{\bf i'}^h(\by)| \le C'|\bx-\by| h^{\eta-1}\le C'' |\bx -\by|^\eta.$$
The first inequality above gives that $T_{\bf i'}^h$, viewed as a vector-valued function of $\bx$ equipped with the $\ell_1$ norm, is Lipschitz, with Lipschitz constant bounded by $C' h^{\eta-1}.$ If $|\bx-\by| < 10 h$ then it follows that
$$ \sum_{\bf i'} |T_{\bf i'}^h(\bx)-T_{\bf i'}^h(\by)| \le C' |\bx-\by | h^{\eta-1} \le  C' |\bx-\by|^{1-\eta+\eta} h^{\eta-1} \le 10 C' |\bx-\by|^\eta. $$

Now suppose that $|\bx-\by| >10 h.$ Let $\bxl$ and $\bxlp$ be the closest grid points to $\bx$ and $\by$, respectively. Clearly, $|\bx-\by| > 1/2 ( |\bx-\bxl | +|\bxl-\bxlp | + |\bxlp - \by |). $
From the above inequalities it follows that
\begin{align*}
\sum_{\bf i'} |T_{\bf i'}^h(\bx)-T_{\bf i'}^h(\by)| &\le C'' (|\bx-\bxl|^\eta +|\bxlp-\by|^\eta) + C |\bxl-\bxlp|^\eta\\
&\le 3^{1-\eta}(C''+C)(|\bx-\bxl| + |\by-\bxlp| + |\bxl-\bxlp|)^\eta \\
 & \le 3^{1-\eta}2^{\eta}(C''+C) |\bx-\by|^\eta.
\end{align*}
where the second to last inequality follows from H\"{o}lder's inequality.

To conclude, combing the bounds on the differences of the $S^h$ and $T^h$ terms, we see that for all $|\bx-\by| \le 1,$
$$\sum_{\bf i'} |L_{\bf i'}^h(\bx) - L_{\bf i'}^h(\by)| \le C_* |\bx-\by|^{\eta}$$
independent of $h.$ The constant $C_*$ does not depend on $\bx,\by,$ or $h,$ though it does depend on $\eta.$
\end{proof}

\end{document}